\def\firstcircle{(70:0.75cm) circle (1cm)}
\def\secondcircle{(190:0.75cm) circle (1cm)}
\def\today{\number\day\space\ifcase\month\or   January\or February\or
   March\or April\or May\or June\or   July\or August\or September\or
   October\or November\or December\fi\   \number\year}
\theoremstyle{definition}
\newtheorem{lma}{Lemma}[section]
\newaliascnt{thmCt}{lma}
\newtheorem{thm}[thmCt]{Theorem}
\newaliascnt{corCt}{lma}
\newtheorem{cor}[corCt]{Corollary}
\newaliascnt{cnjCt}{lma}
\newaliascnt{propCt}{lma}
\newtheorem{prop}[propCt]{Proposition}
\newtheorem*{thm*}{Theorem}
\newtheorem*{cor*}{Corollary}
\newtheorem*{prop*}{Proposition}
\newcounter{theoremintro}
\newtheorem{defintro}[theoremintro]{Definition}
\newtheorem{thmintro}[theoremintro]{Theorem}
\newaliascnt{pgrCt}{lma}
\newaliascnt{dfCt}{lma}
\newtheorem{df}[dfCt]{Definition}
\newaliascnt{remCt}{lma}
\newtheorem{rem}[remCt]{Remark}
\newaliascnt{remsCt}{lma}
\newaliascnt{egCt}{lma}
\newtheorem{eg}[egCt]{Example}
\newaliascnt{egsCt}{lma}
\newaliascnt{qstCt}{lma}
\newaliascnt{pbmCt}{lma}
\newaliascnt{notaCt}{lma}
\newtheorem{nota}[notaCt]{Notation}
\newcommand{\beq}{\begin{equation}}
\newcommand{\eeq}{\end{equation}}
\newcommand{\beqa}{\begin{eqnarray*}}
\newcommand{\eeqa}{\end{eqnarray*}}
\newcommand{\bal}{\begin{align*}}
\newcommand{\eal}{\end{align*}}
\newcommand{\bi}{\begin{itemize}}
\newcommand{\ei}{\end{itemize}}
\newcommand{\be}{\begin{enumerate}}
\newcommand{\ee}{\end{enumerate}}
\newcommand{\Z}{{\mathbb{Z}}}
\newcommand{\C}{{\mathbb{C}}}
\newcommand{\N}{{\mathbb{N}}}
\newcommand{\id}{{\mathrm{id}}}
\newcommand{\Aut}{{\mathrm{Aut}}}
\newcommand{\ca}{$C^*$-algebra}
\newcommand{\I}{\infty}
\title[]{Decomposable partial actions}
\thanks{
The second named author was partially supported
by the Deutsche Forschungsgemeinschaft's (DFG) eigene Stelle, and by a Postdoctoral Research Fellowship
from the Humboldt Foundation. The third named author was supported by a Kreitman Foundation Fellowship, a Minerva
Fellowship Programme, and by an Israel Science Foundation grant no.~476/16. The second and third named authors were partially supported by the 
DFG through SFB 878 and under Germany's Excellence Strategy – EXC 2044 – 390685587, Mathematics M\"unster – Dynamics – Geometry – Structure, and by ERC Advanced Grant 
834267 - AMAREC}
\author[Fernando Abadie]{Fernando Abadie}
\address{Fernando Abadie
Centro de Matem\'atica, Facultad de Ciencias,
Universidad de la Rep\'ublica, Igu\'a 4225, 11400
Montevideo, Uruguay.}
\email{fabadie@cmat.edu.uy}
\urladdr{http://www.cmat.edu.uy/~fabadie/}
\author[Eusebio Gardella]{Eusebio Gardella}
\address{Eusebio Gardella
Mathematisches Institut, Fachbereich Mathematik und Informatik der
Universit\"at M\"unster, Einsteinstrasse 62, 48149 M\"unster, Germany.}
\email{gardella@uni-muenster.de}
\urladdr{www.math.uni-muenster.de/u/gardella/}
\author[Shirly Geffen]{Shirly Geffen}
\address{Shirly Geffen
Department of Mathematics, Ben-Gurion University of the Negev, Be’er
Sheva 8410501, Israel.}
\email{shirlyg@post.bgu.ac.il}
\urladdr{https://shirlygeffen.com/}
\begin{document}

\begin{abstract}
We define the decomposition property for partial actions of discrete groups on $C^*$-algebras.
Decomposable partial systems
appear naturally in practice, and
many commonly occurring partial actions can be decomposed into partial actions
with the decomposition property. For instance, any partial action of a finite 
group is an iterated extension of decomposable systems. 

Partial actions with the decomposition property are always globalizable and ame\-nable, regardless of the acting group, and their 
globalization can be explicitly described in terms of certain global sub-systems.
A direct computation of their crossed products is also carried out.
We show that partial actions with the decomposition property behave in many ways
like global actions of finite groups (even when the acting group is infinite),
which makes their study particularly accessible. For example, there exists
a canonical faithful conditional expectation onto the fixed point algebra, which
is moreover a corner in the crossed product in a natural way. (Both of these
facts are in general false for partial actions of finite groups.) 
As an application, we show that freeness of a topological partial action with 
the decomposition property is equivalent to its fixed point algebra being Morita
equivalent to its crossed product. We also show by example that this fails for
general partial actions of finite groups.
\end{abstract}

\maketitle

\vspace{-.5cm}

\renewcommand*{\thetheoremintro}{\Alph{theoremintro}}
\section{Introduction}
The computation of the $K$-theory of \ca s is a challenging task, 
whose origins date back to Cuntz's seminal work \cite{Cun_theory_1981}.
Obtaining descriptions of the $K$-groups is 
useful when trying to distinguish \ca s, and in many cases of interest, 
algebras with isomorphic $K$-theory are themselves isomorphic (see \cite{EllTom_regularity_2008} for a survey on this topic).
There are a number of tools that help us compute the $K$-groups of a given 
\ca, and the Pimsner-Voiculescu six-term exact sequence for crossed products by
$\Z$-actions \cite{PimVoi_1980} is arguably one of the most powerful ones. 
A classical result of Landstad provides an abstract characterization of 
$\Z$-crossed products as those algebras carrying a circle action with 
suitably large spectral subspaces, thus granting access to the 
Pimsner-Voiculescu exact sequence without knowing beforehand that the algebra
in question is a $\Z$-crossed product.
However, many naturally occurring circle
actions tend to have rather ``small'' spectral subspaces (this is typically
the case for gauge actions), and Landstad's result therefore does not apply.
This problem was tackled by Exel in~\cite{Exe_circle_1994}, where he showed
that under very mild assumptions on the circle action, such algebras are
isomorphic to the crossed product of what he called a \emph{partial 
automorphism}, that is, an isomorphism between two ideals. This, in combination
with a version of Pimsner-Voiculescu for partial automorphisms also obtained 
in \cite{Exe_circle_1994}, allows one to compute the $K$-groups of a large
class of \ca s, including all Cuntz-Krieger algebras.

The notion of a partial action of a general discrete group $G$ on a \ca\ $A$
was introduced by McClanahan in \cite{McC_theory_1995}:
it is a collection $(A_g)_{g\in G}$ of ideals of $A$ and isomorphisms $\alpha_g\colon A_{g^{-1}}\to A_g$, for $g\in G$, such that $\alpha_1=\id_A$ and $\alpha_{gh}$ extends $\alpha_g\circ \alpha_h$ wherever the decomposition is well-defined.
By taking all domains to be $A$ we recover the usual definition of a 
group action (which we call a \textit{global} action, to avoid confusion).
The generality of these objects is 
illustrated by the fact that large classes of \ca s
can be described as partial crossed products of 
commutative \ca s, including 
AF-algebras \cite{Exe_approximately_1995}, Bunce-Deddens algebras \cite{Exe_bunce_1994}, Exel-Laca algebras \cite{ExeLac_cuntz_1999},
or the Jiang-Su algebra \cite{DeePutStr_constructing_2018}, thus 
considerably enlarging the toolkit available to study them. 

The study of partial actions unveiled a number of unexpected connections, particularly as partial actions were realized to be closely related 
to inverse semigroup actions. Indeed,
Exel showed in \cite{Exe_partial_1998} that to every discrete group $G$ one can canonically
associate a semigroup $E(G)$, sometimes referred to as the \emph{Exel semigroup} 
of $G$, in such a way that partial actions of $G$ on 
a topological space $X$ are in natural one-to-one correspondence with 
inverse semigroup morphisms $E(G)\to \mathrm{Homeo}_{\mathrm{par}}(X)$
from $E(G)$ to the inverse semigroup of partial homeomorphisms of $X$. Moreover,
the inverse semigroup \ca\ $C^*(E(G))$ can be canonically identified with the 
partial group algebra $C^*_{\mathrm{par}}(G)$ of $G$. For further connections 
to inverse semigroups and related objects, we refer the reader to 
Exel-Steinberg's Proceedings of the ICM \cite{ExeSte_inverse_2018}.
Connections to groupoids have been explored
in \cite{Aba_partial_2004, Aba_partial_2005}.

Despite the numerous advances within the theory of partial dynamical systems, 
a number of aspects remain unexplored, and many others exhibit behaviors 
that differ dramatically from the case of global actions. This is in spite
of the tight connections with global systems discovered in \cite{Aba_enveloping_2003} in the context of the globalization problem. 
For example,
in the case of finite groups, and even for actions that admit a globalization, virtually all averaging arguments (as well as 
their consequences) that are standard for global actions, completely break 
down in the partial setting. Among others, the fixed point algebra may be very
small; there is in general no conditional expectation onto it; and it is in general
not a corner in the crossed product. The lack of approximate identities that
are compatible with the partial action is also a source of difficulties in this setting. 

The present work originates in the attempts by the authors to obtain a better and
more systematic 
understanding of the internal structure of partial actions.
To illustrate this, let us consider first the smallest nontrivial finite 
group $\Z_2$, and let $\alpha$
be a partial action of $\Z_2$ on a compact Hausdorff space $X$.
This amounts to a choice of an open subset $U\subseteq X$ and an order two 
homeomorphism $\sigma$ of it. The restriction of this partial system to $U$
is a global system, while the 
``remainder'' $Y=X\setminus U$ is acted upon trivially. In other words, there
is an equivariant topological extension
$\xymatrix{U \ \ar@{^{(}->}[r]& X \ar@{->>}[r] & Y}$. 
The equivariant structure of $U$ and $Y$ is completely understood, 
and the complexity of the partial action $\alpha$ is determined by the way 
how $U$ and $Y$ are glued together. 
Regardless of how complicated this gluing is, 
many aspects of $C(X)\rtimes_\alpha \Z_2$ can be deduced
from the fact that it is an extension of $C(Y)$ by 
$C_0(U)\rtimes_\sigma \Z_2$, the latter being a global crossed product.

For larger groups, one has to iterate the process of decomposing the action
into simpler sub-systems. For example, an action of $\Z_3$ on a compact Hausdorff
space $X$ is given by the choice of two open subsets $U_1, U_2\subseteq X$, 
and a homeomorphism $\sigma_1\colon U_2\to U_1$ satisfying $\sigma_1^3=\mathrm{id}$ wherever
the composition is well-defined.
The restriction of this partial system to $U=U_1\cap U_2$ is thus global, 
and we get an extension
\[\xymatrix{0\ar[r] & C_0(U)\ar[r] &C(X) \ar[r] & C(Y)\ar[r] &0,}\]
where $Y=X\setminus U$. In this case, however, the induced partial action 
on $Y$ is not trivial. Instead, we may decompose it further: with 
$V_1=U_1\setminus U_2$ and $V_2=U_2\setminus U_1$, the homeomorphisms induced
by $\sigma_1$ and $\sigma_2$ ``exchange'' $V_1$ and $V_2$ (and act internally
in a global manner),
while the complement $Z=Y\setminus (V_1\sqcup V_2)$ 
carries the trivial partial action.   	
   	Thus, we get an equivariant extension
\[\xymatrix{0\ar[r] & C_0(V_1\sqcup V_2)\ar[r] &C(Y) \ar[r] & C(Z)\ar[r] &0,}\]
where now the action on $V_1\sqcup V_2$ is a combination of a translation and 
an order-3 homeomorphism, and the action on $Z$ is trivial. We have therefore
reduced the understanding of $\Z_3\curvearrowright X$ to the understanding of 
the systems $\Z_3\curvearrowright U$ and $\Z_3\curvearrowright V_1\sqcup V_2$,
together with the understanding of the above equivariant extensions.
\vspace{-.15cm}
\def\firstcircle{(0:0.7cm) circle (1.2cm)}
\def\secondcircle{(180:0.8cm) circle (1.2cm)}
    \begin{center}
    	\begin{tikzpicture}
   		\begin{scope}
     	\clip \secondcircle;
   		\fill[gray] \firstcircle;
   		\end{scope}
   		\begin{scope}
   		\clip \secondcircle;
   		\end{scope}
   		\begin{scope}
   		\clip \firstcircle;
   		\end{scope}
   		\begin{scope}
   		\clip \firstcircle;
   		\clip \secondcircle;
   		\end{scope}[font=\large]
   		\draw \firstcircle node[right] {$V_2$};
   		\draw \secondcircle node [left] {$V_1$};
   		\draw ([xshift=-1.5em,yshift=-2em]current bounding box.south west)
    	rectangle
    	([xshift=1.5em,yshift=2em]current bounding box.north east);
   		\node at (0.1,1.6) [font=\small]{$\sigma_2=\sigma_1^{-1}$};
		\node at (2.9,1.4){$X$};
   		\node [font=\large] {$U$};
   		\node at (-1.7,-1.2) {$U_1$};
   		\node at (1.7,-1.2) {$U_2$};
   		\path[->] (-0.8,0.9) edge [out=90, in=100] (0.8,0.9);
   		\end{tikzpicture}
   	\end{center} 

For general finite groups, the decomposition process is rather subtle: one 
obtains a larger number of extensions, and the structure of 
the intermediate $G$-equivariant ideals becomes very complicated. 
Problems of this nature already appeared in the work of the 
third-named author \cite{Gef_nuclear_2020}.
In particular, for such a decomposition to be useful in practice, one
would like to be able to compute the crossed products of the intermediate
systems. 

Our attempts to shed light over the problem described above
led us to isolate and study a property that the intermediate systems
satisfy, which we call the \emph{decomposition property}. 
For a discrete group $G$ and $n\in\N$, we set 
\[\mathcal{T}_n(G)=\{\tau\subseteq G\colon 1\in \tau, |\tau|=n\},\]
endowed with the partial action of $G$ by left multiplication. 
For a partial action $\alpha=((A_g)_{g\in G}, (\alpha_g)_{g\in G})$
on a \ca\ $A$, we set $A_\tau=\bigcap_{g\in\tau}A_g$ for $\tau\in\mathcal{T}_n(G)$. 

\begin{defintro}\label{defintro:IntProp}
Let $n\in\mathbb{N}$ and let $\alpha$ be a partial action of $G$ on $A$. We say that $\alpha$ has the \textit{$n$-decomposition property} if 
	\begin{enumerate}
		\item[(a)] $A=\overline{\sum\limits_{\tau\in \mathcal{T}_n(G)} A_\tau}$, and
		\item[(b)] $A_\tau\cap A_g=\{0\}$ for all $\tau\in \mathcal{T}_n(G)$ and all $g\notin \tau$.
	\end{enumerate}
We say that $\alpha$ is \textit{decomposable} $\alpha$ has the $n$-decomposition property for some $n\in\N$.
\end{defintro}

Decomposable partial 
actions form a rich class of systems
whose essential structure is roughly a combination of 
a global action of a finite group (even if the original group is infinite), 
a translation, and a trivial action.
In this sense, actions with the decomposition property are 
much more accessible than general partial actions, the upshot being that many naturally
occurring partial actions can be approximated by systems with the decomposition
property. This is particularly the case for arbitrary 
partial actions of finite groups.
This fact is extensively exploited in \cite{AbaGarGef_partial_2020}.

Let $\alpha$ be a partial action with the $n$-decomposition property. Then 
$\alpha$ decomposes as a direct sum of restrictions to sub-systems of the 
form $A_{G\cdot \tau}=\bigoplus_{g\in \tau} A_{g^{-1}\tau}$. Thus, it 
suffices to understand these in order to understand $\alpha$. 
For a given $\tau$, we set $H_\tau=\{h\in G\colon h\tau=\tau\}$,
which is a finite subgroup of $G$ that acts globally on $A_\tau$. Moreover, 
there are $x_0,x_1,\ldots,x_{m_\tau}\in G$ such that 
$\tau=\bigsqcup_{j=0}^{m_\tau} H_\tau x_j$. This data determines most of the relevant
features of the action on $A_{G\cdot \tau}$ (and hence on $A$), 
which we study in detail in this work. 

The main result of Section~3 (\autoref{thm:InterPropGlobz}) 
asserts that actions with the decomposition property 
are always amenable and globalizable, and provides an explicit description of the globalization in terms of induced systems 
(see \autoref{df:Ind}):

\begin{thmintro}\label{thmintro:EnvAct}
Let $\alpha$ be a partial action of $G$ on a $C^*$-algebra $A$ with the $n$-decomposition property. Then each $G\curvearrowright A_{G\cdot \tau}$ is amenable and globalizable, and its 
globalization is 
$G\curvearrowright \mathrm{Ind}_{H_\tau}^{G}(A_\tau)$. It follows that $\alpha$ is
amenable and globalizable, 
and globalization is a direct sum of actions of the form 
$G\curvearrowright \mathrm{Ind}_{H_\tau}^{G}(A_\tau)$.
\end{thmintro}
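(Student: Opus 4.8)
The plan is to prove Theorem B by first reducing the problem to a single orbit, then constructing the globalization of each orbit system explicitly as an induced algebra. By Definition~\ref{defintro:IntProp}(a) and the fact that the subsystems $A_{G\cdot\tau}$ are mutually orthogonal (which follows from condition (b), since distinct orbits $G\cdot\tau$ are disjoint and the corresponding ideals meet trivially), the partial action $\alpha$ splits as a direct sum $A=\bigoplus_{[\tau]} A_{G\cdot\tau}$ over the orbits of $G$ acting on $\mathcal{T}_n(G)$. Since amenability and globalizability are both preserved under (countable) direct sums of partial actions, and since the globalization of a direct sum is the direct sum of the globalizations, it suffices to establish the claim for a single orbit system $G\curvearrowright A_{G\cdot\tau}$. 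Thus the bulk of the work is to show that this restricted system is amenable and globalizable with globalization $G\curvearrowright\mathrm{Ind}_{H_\tau}^G(A_\tau)$.

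\medskip

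\emph{First I would} fix a representative $\tau$ and analyze the structure of $A_{G\cdot\tau}=\bigoplus_{g\in\tau} A_{g^{-1}\tau}$. The key observation is that the finite group $H_\tau=\{h\in G:h\tau=\tau\}$ acts globally on $A_\tau$ (the stabilizer fixes $\tau$, so $\alpha_h$ restricts to an automorphism of $A_\tau$ for $h\in H_\tau$, and one checks these assemble into a genuine global action of the finite group $H_\tau$). Using the coset decomposition $\tau=\bigsqcup_{j=0}^{m_\tau} H_\tau x_j$, the ideals $A_{g^{-1}\tau}$ are indexed by the cosets, and the partial action permutes them according to left multiplication in $G$ on $\mathcal{T}_n(G)$, twisted by the $H_\tau$-action on each copy of $A_\tau$. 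This is precisely the combinatorial pattern of an induced action: $\mathrm{Ind}_{H_\tau}^G(A_\tau)$ is, by Definition~\ref{df:Ind}, the algebra of equivariant functions $G\to A_\tau$ (or equivalently $A_\tau^{\oplus[G:H_\tau]}$ with the translation-and-twist $G$-action), and its restriction to the natural open $G$-invariant piece reproduces $A_{G\cdot\tau}$ as a partial system.

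\medskip

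\emph{The concrete step} is then to exhibit an equivariant isomorphism from $A_{G\cdot\tau}$ onto a partial subsystem of the global system $G\curvearrowright\mathrm{Ind}_{H_\tau}^G(A_\tau)$ whose orbit is dense, and to verify the universal/minimality property characterizing a globalization (namely that $A_{G\cdot\tau}$ sits as an ideal whose $G$-translates generate the induced algebra, as in the criterion of \cite{Aba_enveloping_2003}). Since the induced system is a genuine global action, globalizability follows, and amenability follows because global actions of any group on the induced algebra are amenable when the inducing subgroup $H_\tau$ is finite — here one invokes that induced systems from finite subgroups are amenable, or equivalently that the partial system is Morita equivalent to a global action of the finite group $H_\tau$, for which amenability is automatic. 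Finally, reassembling over all orbit representatives via the direct-sum reduction gives the full statement.

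\medskip

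\emph{The hard part will be} bookkeeping the identification $A_{G\cdot\tau}\cong$ (open invariant subalgebra of $\mathrm{Ind}_{H_\tau}^G(A_\tau)$) precisely enough that it is simultaneously $G$-equivariant for the partial action on the left and the global action on the right, and that it satisfies the density/minimality condition certifying it as \emph{the} globalization rather than merely \emph{a} global system containing it. The subtlety is that the partial-action structure maps $\alpha_g$ on $A_{G\cdot\tau}$ must match the restriction of the global translation maps on $\mathrm{Ind}_{H_\tau}^G(A_\tau)$ under the coset bookkeeping $\tau=\bigsqcup_j H_\tau x_j$, and condition (b) of Definition~\ref{defintro:IntProp} is exactly what guarantees the indexing is unambiguous (no ideal $A_\tau$ is simultaneously accessible through two distinct group elements outside $\tau$). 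I expect the amenability and direct-sum reductions to be routine given the cited results; the genuine content lies in this explicit coordinatization of $A_{G\cdot\tau}$ as a piece of the induced algebra.
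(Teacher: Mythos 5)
Your proposal is correct and follows essentially the same route as the paper: reduce to a single orbit summand $A_{G\cdot\tau}$, embed it as an ideal of $\mathrm{Ind}_{H_\tau}^G(A_\tau)$ via the coset decomposition $\tau=\bigsqcup_j H_\tau x_j$, verify the enveloping-action axioms (ideal, equivariance, density of translates), and deduce amenability from the finiteness of $H_\tau$. The paper carries out exactly this plan, writing the embedding explicitly as $\iota_\tau(a)(g)=\alpha_g(\pi_k^\tau(a))$ for $g\in H_\tau x_k$ and $0$ otherwise, which is the coordinatization you correctly identify as the main remaining work.
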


Combining the above theorem with results of the first-named
author from \cite{Aba_enveloping_2003}, it follows that the partial crossed
product $A\rtimes_\alpha G$ can be computed, up to Morita equivalence,
using the crossed products of the global sub-systems $H_\tau\curvearrowright A_{\tau}$. However, for some applications it is necessary to have a 
computation of $A\rtimes_\alpha G$ up to isomorphism and not just up to
Morita equivalence, and we do this in \autoref{prop:CrossedProductTupleOrb}:

\begin{thmintro}\label{thmintro:CrossProd}
Let $\alpha$ be a partial action of $G$ on a $C^*$-algebra $A$ with the $n$-decomposition property. 	
Then $A_{G\cdot\tau}\rtimes_\alpha G\cong M_{m_\tau +1}(A_\tau\rtimes H_\tau)$. In particular, 
$A\rtimes_\alpha G$ is a direct sum of algebras of the form 
$M_{m_\tau +1}(A_\tau\rtimes H_\tau)$, for $\tau\in\mathcal{T}_n(G)$.
\end{thmintro}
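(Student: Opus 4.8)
The plan is to prove the isomorphism for a single orbit summand and then assemble, working throughout with the explicit crossed-product picture rather than going through the globalization. By the direct-sum decomposition $\alpha=\bigoplus_\tau\alpha|_{A_{G\cdot\tau}}$ recalled just before the statement, the crossed product splits as $A\rtimes_\alpha G\cong\bigoplus_\tau\big(A_{G\cdot\tau}\rtimes_\alpha G\big)$, so it suffices to establish $A_{G\cdot\tau}\rtimes_\alpha G\cong M_{m_\tau+1}(A_\tau\rtimes H_\tau)$. Write $C=A_{G\cdot\tau}\rtimes_\alpha G$ and fix coset representatives $x_0=1,x_1,\dots,x_{m_\tau}$ with $\tau=\bigsqcup_{j=0}^{m_\tau}H_\tau x_j$, so that the distinct summands are $A_{G\cdot\tau}=\bigoplus_{j=0}^{m_\tau}A_{x_j^{-1}\tau}$ with $A_{x_0^{-1}\tau}=A_\tau$. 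Since $h\tau=\tau$ for $h\in H_\tau$ gives $\alpha_h(A_\tau)=A_\tau$, the subgroup $H_\tau$ acts globally on $A_\tau$, and I first record that $B:=\overline{\spn}\{a\delta_h:a\in A_\tau,\,h\in H_\tau\}$ is a $C^*$-subalgebra of $C$ canonically isomorphic to the global crossed product $A_\tau\rtimes H_\tau$ (using $A_\tau\subseteq A_h$ for $h\in H_\tau$ one checks that this span is closed under the product and adjoint of $C$, and that the canonical map $A_\tau\rtimes H_\tau\to C$ is isometric).

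The structural heart of the argument is to pin down, via \autoref{defintro:IntProp}(b), exactly which group elements contribute to $C$. For the restricted partial action on $A_{G\cdot\tau}$ the ideals are $A_{G\cdot\tau}\cap A_g=\bigoplus_j\big(A_{x_j^{-1}\tau}\cap A_g\big)$, and since $A_{x_j^{-1}\tau}=A_\sigma$ with $\sigma=x_j^{-1}\tau\in\mathcal{T}_n(G)$, property (b) forces $A_{x_j^{-1}\tau}\cap A_g$ to vanish unless $g\in x_j^{-1}\tau$, in which case it equals all of $A_{x_j^{-1}\tau}$. Hence the group-direction support of $C$ is exactly the finite set $\bigcup_{i,j}x_j^{-1}H_\tau x_i$, and each spanning element may be taken of the form $a\delta_g$ with $a\in A_{x_j^{-1}\tau}$ and $g\in x_j^{-1}\tau$. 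A second ingredient is the identity $\alpha_{x_j^{-1}}(A_\tau)=A_{x_j^{-1}\tau}$, which follows from the standard relation $\alpha_{x_j^{-1}}(A_{x_j}\cap A_t)=A_{x_j^{-1}}\cap A_{x_j^{-1}t}$ together with $1\in\tau$; this exhibits each summand as an isomorphic copy of $A_\tau$ and is compatible with the pairwise orthogonality of the summands.

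I then build the matrix units. Let $(u_\lambda)_\lambda$ be an approximate unit for $A_\tau$; since $A_\tau\subseteq A_{x_j}$, the elements $u_\lambda\delta_{x_j}$ lie in $C$ and converge strictly to partial isometries $w_j\in M(C)$. A direct computation with the crossed-product formulas gives $w_jw_j^*=p_0$ and $w_j^*w_j=p_j$, where $p_j\in M(C)$ denotes the support projection of the summand $A_{x_j^{-1}\tau}$; using orthogonality of the summands one checks $w_jw_k^*=\delta_{jk}p_0$, so that $e_{ij}:=w_i^*w_j$ form a system of matrix units with $e_{ii}=p_i$. Because the coefficient algebra $A_{G\cdot\tau}$ is nondegenerate in $C$, one has $\sum_i p_i=1_{M(C)}$, so $C\cong M_{m_\tau+1}(p_0Cp_0)$ by the usual matrix-unit lemma. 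Finally, compressing a spanning element $a\delta_g$ by $p_0$ on both sides and using the support computation of the previous paragraph shows that $p_0(a\delta_g)p_0$ equals $a\delta_g$ when $a\in A_\tau$ and $g\in H_\tau$, and vanishes otherwise; thus $p_0Cp_0=B\cong A_\tau\rtimes H_\tau$, and the isomorphism $A_{G\cdot\tau}\rtimes_\alpha G\cong M_{m_\tau+1}(A_\tau\rtimes H_\tau)$ follows.

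I expect the main obstacle to be the non-unital bookkeeping: making the partial isometries $w_j$ and the compression $p_0Cp_0$ rigorous requires passing to the multiplier algebra and arguing with approximate units and strict limits rather than honest projections, and one must verify the matrix-unit relations and the corner identification at that level. The \emph{conceptual} input, by contrast, is entirely property (b), which is what collapses the a priori infinite $G$-orbit to a finite matrix pattern; everything else is continuity of multiplication together with the explicit crossed-product formulas. An alternative route would start from \autoref{thmintro:EnvAct} and apply Green's imprimitivity theorem to $\mathrm{Ind}_{H_\tau}^{G}(A_\tau)\rtimes G$, but upgrading the resulting Morita equivalence to the precise form $M_{m_\tau+1}(A_\tau\rtimes H_\tau)$ would still require identifying the relevant imprimitivity bimodule as a free module of rank $m_\tau+1$, so the direct construction above seems cleaner.
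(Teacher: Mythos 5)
Your argument is correct in substance, but it takes a genuinely different route from the paper. The paper (in \autoref{prop:CrossedProductTupleOrb}) goes in the \emph{opposite direction}: it writes down an explicit covariant pair $(\varphi,u)$ of $(A_{G\cdot\tau},\alpha)$ in $M_{m_\tau+1}(A_\tau\rtimes_{\alpha|_{H_\tau}}H_\tau)$, with $\varphi(a)=\sum_j\alpha_{x_j}(\pi_j(a))\otimes e_{j,j}$ and $u_g=\sum_{j,k}\mathbbm{1}_{\{g\in x_j^{-1}H_\tau x_k\}}v_{x_jgx_k^{-1}}\otimes e_{j,k}$, verifies that $u$ is a partial representation and that the pair is covariant, obtains $\psi_\tau$ from the universal property of the full partial crossed product, and then exhibits an explicit inverse $\phi_\tau(av_h\otimes e_{j,k})=\alpha_{x_j^{-1}}(a)\delta_{x_j^{-1}hx_k}$ checked on linear generators. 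You instead work \emph{inside} $C=A_{G\cdot\tau}\rtimes_\alpha G$, producing matrix units $e_{ij}=w_i^*w_j$ from the strict limits $w_j$ of $u_\lambda\delta_{x_j}$ and identifying the corner $p_0Cp_0$ with the span of $\{a\delta_h:a\in A_\tau,\ h\in H_\tau\}$; the combinatorial input (orthogonality of the summands $A_{x_j^{-1}\tau}$ and the vanishing $A_{x_j^{-1}\tau}\cap A_g=\{0\}$ for $g\notin x_j^{-1}\tau$) is exactly the same as in the paper's computation, and your verification of $w_jw_k^*=\delta_{jk}p_0$, $\sum_jp_j=1_{M(C)}$ and the corner identification all go through. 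What your approach buys is a structural picture (the crossed product is literally a matrix amplification of an internal corner) and it avoids checking the partial-representation and covariance identities; what the paper's approach buys is the explicit formula for $\psi_\tau$, which is needed verbatim later in \autoref{lma:IdentifyCornerMap}, and the fact that the explicit two-sided inverse settles injectivity everywhere at once.

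The one step you should not leave as a parenthetical is the claim that the canonical map $A_\tau\rtimes H_\tau\to C$ is isometric onto $B$. Inclusions of sub-systems do not in general induce injections of \emph{full} crossed products, so this needs an argument: here it works because $H_\tau$ is finite, so $A_\tau\rtimes H_\tau=A_\tau\rtimes_r H_\tau$ carries a faithful canonical conditional expectation, and the canonical expectation $E\colon C\to A_{G\cdot\tau}$ (restricted to $B$, after passing to the reduced partial crossed product, which is legitimate by the amenability established in \autoref{thm:InterPropGlobz}) intertwines the two; faithfulness of $E_{H_\tau}$ then forces injectivity. With that supplied, and the routine strict-limit bookkeeping you already flag, your proof is complete.
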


Using the above result, we provide 
an explicit computation of the partial group algebra $C^*_{\mathrm{par}}(G)$ of a finite group $G$ from \cite{Exe_partial_1998}; see \autoref{thm:PartialGpAlg}. Indeed, $C^*_{\mathrm{par}}(G)$ can be 
realized as the crossed product of the partial action of $G$ on
$\bigsqcup_{n=1}^{|G|} \mathcal{T}_n(G)$. Moreover, 
$G\curvearrowright \mathcal{T}_n(G)$ has the $n$-decomposition property, so 
its crossed product can be computed using Theorem~\ref{thmintro:CrossProd}; see \autoref{thm:PartialGpAlg}. 
An equivalent description of $C^*_{\mathrm{par}}(G)$ was obtained in \cite{DokExePic_partial_2000}.

Perhaps some of the most surprising features of actions with the decomposition
property refer to their $G$-invariant elements and fixed point subalgebras. 
For global actions, the theory works best in the setting of finite groups: for instance, there is a faithful conditional expectation 
$E\colon A\to A^G$, and there is
an injective homomorphism 
$c\colon A^G\to A\rtimes G$ whose image is a corner in $A\rtimes G$. 
Both of these maps fail to exist for 
infinite groups. For partial actions, even of finite groups, additional 
complications arise and these maps rarely exist; 
see \autoref{eg:NoCondExp}. The following 
(\autoref{prop:CondExp}, \autoref{prop:CornerMap}) is
thus unexpected:

\begin{thmintro}\label{thmintro:fixPt}
Let $\alpha$ be a decomposable partial action of $G$ on a $C^*$-algebra $A$. Then there are a faithful conditional expectation 
$E\colon A\to A^G$ and an injective homomorphism 
$c\colon A^G\to A\rtimes G$ whose image is a corner in $A\rtimes G$. 
\end{thmintro}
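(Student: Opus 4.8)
The plan is to reduce everything to the structural decomposition provided by Theorems~\ref{thmintro:EnvAct} and~\ref{thmintro:CrossProd}, and then exploit the fact that each building block $A_{G\cdot\tau}\rtimes_\alpha G \cong M_{m_\tau+1}(A_\tau\rtimes H_\tau)$ is a crossed product by a \emph{finite} group $H_\tau$ acting \emph{globally} on $A_\tau$. Since $\alpha$ decomposes as a direct sum of the restrictions to the sub-systems $A_{G\cdot\tau}$, both the conditional expectation and the corner map will be built blockwise and then assembled as a direct sum; the only genuine content is therefore to produce $E$ and $c$ on a single block and to verify that the blockwise constructions are mutually compatible (i.e.\ respect the invariant and fixed-point decompositions).

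First I would pin down the fixed-point algebra. For a global action of a finite group $H_\tau$ on $A_\tau$, the fixed point algebra $A_\tau^{H_\tau}$ carries the standard faithful conditional expectation $E_\tau(a)=\frac{1}{|H_\tau|}\sum_{h\in H_\tau}(\alpha_h|_{A_\tau})(a)$, and the usual averaged-unit argument exhibits $A_\tau^{H_\tau}$ as a corner $p(A_\tau\rtimes H_\tau)p$ via $p=\frac{1}{|H_\tau|}\sum_{h\in H_\tau}u_h$ (taking this projection inside a unitization if $A_\tau$ is nonunital, then noting $p$ is already a multiplier). The key identification I would establish is that the $G$-invariant part $(A_{G\cdot\tau})^G$ agrees canonically with $A_\tau^{H_\tau}$: an element of $A_{G\cdot\tau}=\bigoplus_{g\in\tau}A_{g^{-1}\tau}$ fixed by the whole partial action is forced, by the translation structure relating the summands $A_{g^{-1}\tau}$ and the fact that $g\notin\tau$ gives $A_\tau\cap A_g=\{0\}$ (condition (b)), to be supported in the $H_\tau$-stable summand $A_\tau$ and to be $H_\tau$-invariant there. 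Under the Morita picture $A_{G\cdot\tau}\rtimes_\alpha G\cong M_{m_\tau+1}(A_\tau\rtimes H_\tau)$, the corner map $c_\tau$ is then $c_\tau(x)=e_{00}\otimes (\iota(x))$ where $\iota\colon A_\tau^{H_\tau}\hookrightarrow A_\tau\rtimes H_\tau$ is the finite-group corner embedding and $e_{00}$ is a rank-one matrix unit; its image is visibly a corner, being $(e_{00}\otimes p)\,M_{m_\tau+1}(A_\tau\rtimes H_\tau)\,(e_{00}\otimes p)$.

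The main obstacle I anticipate is not the single-block construction but the passage back to $A$ itself, i.e.\ verifying that $E=\bigoplus_\tau E_\tau$ is well-defined and faithful on $A=\overline{\sum_\tau A_\tau}$ and lands in $A^G$. The subtlety is that the decomposition $A=\overline{\sum_{\tau\in\mathcal{T}_n(G)} A_\tau}$ from condition (a) need not be an internal direct sum as stated, so I must first upgrade it: condition (b) forces the orbit pieces $A_{G\cdot\tau}$, as $\tau$ ranges over orbit representatives of $G\curvearrowright\mathcal{T}_n(G)$, to be orthogonal ideals whose closed sum is $A$, which is precisely the direct-sum statement used in Theorem~\ref{thmintro:EnvAct}. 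Granting that, $E$ is defined coordinatewise and inherits faithfulness and the conditional-expectation identities from the $E_\tau$; that its range is exactly $A^G$ follows from the identification $(A_{G\cdot\tau})^G\cong A_\tau^{H_\tau}$ above. Finally, since the partial crossed product respects direct sums, $A\rtimes G\cong\bigoplus_\tau A_{G\cdot\tau}\rtimes_\alpha G$, and $c=\bigoplus_\tau c_\tau$ is injective with corner image, the projection realizing the corner being the direct sum (equivalently, the multiplier-level sum) of the projections $e_{00}\otimes p$. I would be careful to treat the case where $A$ is nonunital throughout, working in multiplier algebras so that the averaging projections make sense, and to confirm that summing infinitely many corner projections yields a genuine projection in the multiplier algebra of $A\rtimes G$ rather than merely a bounded net.
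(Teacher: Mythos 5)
Your reduction to a single orbit block $A_{G\cdot\tau}$ is sound, and transporting the corner embedding through the isomorphism $A_{G\cdot\tau}\rtimes_\alpha G\cong M_{m_\tau+1}(A_\tau\rtimes H_\tau)$ is a legitimate alternative to the paper's direct construction. But there is a genuine error at the heart of your argument: it is \emph{not} true that a $G$-invariant element of $A_{G\cdot\tau}=\bigoplus_{j=0}^{m}A_{x_j^{-1}\tau}$ is forced to be supported in the summand $A_\tau$. In fact the only $G$-invariant element supported in $A_\tau$ is $0$ (when $m\geq 1$): if $a\in A_\tau$ is $G$-invariant, take $g=x_1\in\tau$, so that $A_\tau\subseteq A_g$, and let $b\in A_\tau$. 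Then $\alpha_{g^{-1}}(ab)\in A_{x_1^{-1}\tau}$ while $a\,\alpha_{g^{-1}}(b)\in A_\tau\cdot A_{x_1^{-1}\tau}=\{0\}$ by orthogonality of the summands, so invariance forces $ab=0$ for all $b\in A_\tau$, whence $a=0$. The correct statement (\autoref{prop:FixedPtAlg}) is that the coordinate projection $\pi_0$ restricts to an isomorphism $(A_{G\cdot\tau})^G\to A_\tau^{H_\tau}$ whose inverse \emph{spreads} an element over all summands, $a\mapsto\sum_{j=0}^{m}\alpha_{x_j}^{-1}(a)$; invariant elements have nonzero components in every $A_{x_j^{-1}\tau}$.

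This error propagates into your conditional expectation. Averaging over $H_\tau$ inside $A_\tau$ gives a map $A_\tau\to A_\tau^{H_\tau}$, but $A_\tau^{H_\tau}$, viewed as a subalgebra of $A$, is \emph{not} contained in $A^G$; so if $\bigoplus_\tau E_\tau$ is taken over all tuples, its range is not $A^G$, and if it is taken over orbit representatives only, it is not defined on the summands $A_{x_j^{-1}\tau}$ with $j\geq 1$. The correct expectation must average over the coset representatives as well as over $H_\tau$, namely
\begin{align*}
E_\tau(a)=\frac{1}{|H_\tau|(m_\tau+1)}\sum_{j,k=0}^{m_\tau}\sum_{h\in H_\tau}\alpha_{x_j^{-1}hx_k}\bigl(\pi_k(a)\bigr),
\end{align*}
which is what \autoref{prop:CondExp} constructs (and then one must check invariance of the output and faithfulness, which uses $a=\sum_k\pi_k(a)$). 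Your corner map, by contrast, is salvageable: once the fixed-point identification is corrected, $c_\tau=e_{00}\otimes c_{H_\tau}$ composed with $\psi_\tau^{-1}$ is an injective homomorphism with corner image, and the theorem as stated does not demand canonicity. The paper instead defines $c_\tau$ by an explicit formula inside the partial crossed product and shows (\autoref{lma:IdentifyCornerMap}) that under $\psi_\tau$ it corresponds to $c_{H_\tau}(\,\cdot\,)\otimes e$ with $e=\frac{1}{m_\tau+1}\sum_{j,k}e_{j,k}$; this choice of rank-one projection is what makes $c$ independent of the representatives $x_j$.
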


Finally, in \autoref{thm:Freeness} we give a characterization of 
freeness for topological partial actions 
with the decomposition property, in terms of the corner map $c$:

\begin{thmintro}
Let $\sigma$ be a decomposable partial action of $G$ on a locally compact
Hausdorff space $X$. Then $\sigma$ is free
if and only if the image of $c\colon C_0(X)^G\to C_0(X)\rtimes_\sigma G$
is full.
\end{thmintro}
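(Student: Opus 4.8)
The plan is to reduce the statement to the sub-systems $G\curvearrowright X_{G\cdot\tau}$ and then apply \autoref{thmintro:CrossProd} to translate the question into the language of global crossed products of the finite groups $H_\tau$. Concretely, write $C_0(X)=\bigoplus_\tau C_0(X_{G\cdot\tau})$ over a set of orbit representatives $\tau\in\mathcal{T}_n(G)$, so that $C_0(X)\rtimes_\sigma G\cong\bigoplus_\tau C_0(X_{G\cdot\tau})\rtimes_\sigma G$ and, by \autoref{thmintro:CrossProd}, each summand is isomorphic to $M_{m_\tau+1}\bigl(C_0(X_\tau)\rtimes H_\tau\bigr)$. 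The corner map $c$ and the fixed point algebra decompose compatibly along this direct sum, since both are built from the $G$-invariant structure. Thus freeness of $\sigma$, the fullness of the image of $c$, and the corresponding statements should each be checked summand-by-summand, and the global equivalence will follow by taking the conjunction over all $\tau$.

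First I would pin down what freeness means for each sub-system. Since $H_\tau=\{h\in G\colon h\tau=\tau\}$ acts globally on $X_\tau$, freeness of the partial action $\sigma$ restricted to $X_{G\cdot\tau}$ should be equivalent to freeness of the honest finite-group action $H_\tau\curvearrowright X_\tau$; the remaining coordinates $x_0,\dots,x_{m_\tau}$ merely translate copies of $X_\tau$ around and contribute no isotropy beyond that of $H_\tau$. I would make this precise by unwinding the definition of the isotropy of a point under the partial action and showing it is conjugate to its $H_\tau$-isotropy. Next, I would identify the image of $c$ on the summand with the canonical copy of $C_0(X_\tau)^{H_\tau}$ sitting inside $M_{m_\tau+1}(C_0(X_\tau)\rtimes H_\tau)$ via the corner map for the global finite-group action; under the isomorphism of \autoref{thmintro:CrossProd}, fullness of $c$'s image in the matrix algebra is equivalent to fullness of the image of the global corner map $c_{H_\tau}\colon C_0(X_\tau)^{H_\tau}\to C_0(X_\tau)\rtimes H_\tau$.

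The crux is therefore the purely global statement: for a finite group $H$ acting on a locally compact Hausdorff space $Y$, the action is free if and only if the image of $c_H\colon C_0(Y)^{H}\to C_0(Y)\rtimes H$ is full. This is classical in spirit. For one direction, if the action is free then the crossed product is Morita equivalent to $C_0(Y/H)\cong C_0(Y)^H$ via an imprimitivity bimodule, and the corner $c_H(C_0(Y)^H)$ is full precisely because the bimodule is full on both sides; I would invoke the Green--Julg / Green imprimitivity picture to see that the ideal generated by the image of $c_H$ is everything. For the converse, I would argue contrapositively: if the action is not free, there is a point $y\in Y$ with nontrivial stabilizer, and by evaluating a representation induced from the isotropy subgroup at $y$ one produces an irreducible representation of $C_0(Y)\rtimes H$ on which the image of $c_H$ acts by scalars in a way that lands in a proper ideal, so the image cannot be full. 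This failure-of-fullness-at-a-fixed-point argument is the main obstacle, since it requires a careful analysis of the restriction of $c_H$ to the fiber over $y$ and the structure of the isotropy representation; everything else is bookkeeping with the direct-sum and matrix-amplification reductions, which preserve both freeness and fullness in the evident way.
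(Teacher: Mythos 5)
Your proposal follows the paper's proof essentially step for step: decompose along the orbits of $\mathcal{T}_n(G)$, show that freeness of $\sigma$ is equivalent to freeness of each global system $H_\tau\curvearrowright X_\tau$ by conjugating the isotropy of a point of $X_{x_j^{-1}\tau}$ into $H_\tau$ via $\sigma_{x_j}$, and use the identification of $c_\tau$ with $c_{H_\tau}\otimes e$ inside $M_{m_\tau+1}\bigl(C_0(X_\tau)\rtimes H_\tau\bigr)$ (the paper's \autoref{lma:IdentifyCornerMap}) to reduce fullness of the image of $c$ to fullness of the global corner maps. The only divergence is the final step, where the paper simply cites Phillips \cite{Phi_equivariant_1987} (freeness is equivalent to saturation, i.e.\ to fullness of the corner, for global actions of finite groups on locally compact Hausdorff spaces), whereas you propose to re-prove this classical equivalence via the imprimitivity bimodule and an isotropy-representation argument at a non-free point; your outline of that argument is sound but not needed.
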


In Theorem~E, we could have used $C_0(X/G)$ in place of $C_0(X)^G$, since the two agree for a decomposable action. 
For general partial actions, even of finite groups, the above characterization fails (even if one considers $C_0(X/G)$); see \autoref{eg:MoritaEqFails}.

\section{The decomposition property}\label{sec: decomposition property}

In this section, we define the decomposition property of a partial action and 
study some of its basic features.
We fix a discrete group $G$ and $n\in\N$.

\begin{df}\label{df:TnG}
Given $n\in\N$, we define the \emph{space of $n$-tuples} of $G$ to be 
\[\mathcal{T}_n(G)=\{\tau \subseteq G \colon 1\in \tau \mbox{ and } |\tau|=n\}.\]
(Note that $\mathcal{T}_n(G)=\emptyset$ whenever $G$ is finite and $n>|G|$.) 
For $g\in G$, we set 
$\mathcal{T}_n(G)_{g}=\{\tau\in \mathcal{T}_n(G)\colon g\in \tau\}$.
There is a canonical partial action 
$\texttt{Lt}$ of $G$ on $\mathcal{T}_n(G)$ with 
$\texttt{Lt}_g\colon \mathcal{T}_n(G)_{g^{-1}}\to \mathcal{T}_n(G)_g$ given by 
$\texttt{Lt}_g(\tau)=g\tau$ for all $\tau\in\mathcal{T}_n(G)_{g^{-1}}$.
\end{df}

We regard $\mathcal{T}_n(G)$ as a topological space, equipped with the discrete topology.

\begin{nota}
Let 
$\alpha=((A_g)_{g\in G}, (\alpha_g)_{g\in G})$ be a partial action of a discrete group $G$ on a \ca\ $A$, and let $n\in\N$.
For $\tau\in \mathcal{T}_n(G)$, we write $A_{\tau}$ for the ideal $A_{\tau}=\bigcap_{g\in \tau} A_{g}$.
Observe that for $g\in G$ and $\tau\in \mathcal{T}_n(G)_{g^{-1}}$, we have
$\alpha_{g}(A_{\tau})=A_{g\tau}$.
For $\tau \in \mathcal{T}_n(G)$, we write $G\cdot \tau\subseteq \mathcal{T}_n(G)$ for the orbit of
$\tau$ (with respect to the partial action $\texttt{Lt}$ from \autoref{df:TnG}), and we set $A_{G\cdot\tau}=\sum_{g\in \tau^{-1}} A_{g\tau}$.
\end{nota}

The following is the main technical definition of this work.

\begin{df}\label{df:nIntProp}
Let $G$ be a discrete group and let 
$\alpha=((A_g)_{g\in G}, (\alpha_g)_{g\in G})$ be a partial action on a \ca\ $A$. 
For $n\in \N$, we say that $\alpha$ has the \emph{$n$-decomposition property} if
\be\item[(a)] $A=\overline{\sum\limits_{\tau\in\mathcal{T}_n(G)}A_\tau}$, and 
\item[(b)]$A_{\tau}\cap A_g= \{0\}$ for all $\tau\in\mathcal{T}_{n}(G)$ and all $g\in G$ such that $g\notin \tau$. \ee
We say that $\alpha$ has the \emph{decomposition property} if it has the $n$-decomposition property for some $n\in \N$. 
A partial action on a locally compact space $X$ is said to have
the \emph{($n$-)decomposition property} if its induced partial action on $C_0(X)$
has it.  
\end{df} 

The following easy observation will be needed later.

\begin{rem} \label{rem:DisjTuples}
In the context of \autoref{df:nIntProp}, 
condition~(b) implies that $A_\tau\cap A_\sigma=\{0\}$ whenever
$\tau,\sigma\in\mathcal{T}_n(G)$ are distinct. In particular, 
we think
of $A$ as a direct sum of the orthogonal ideals appearing in
condition~(a). 
In fact if $n>1$, then $\alpha$ has the
$n$-decomposition property if and only if $A$ is
isomorphic to the $C^*$-algebraic direct sum
$\bigoplus_{\tau\in\mathcal{T}_n(G)}A_\tau$, in such a way
that $A_g$ corresponds to $\bigoplus_{\tau\in\mathcal{T}_n(G)_g}A_\tau$. Indeed, assume that
the latter holds, and let
$\tau\in\mathcal{T}_n(G)$ and $g\in G\setminus\tau$. 
Pick any $g'\in \tau\setminus\{1\}$ and set
$\sigma=(\tau\cup\{g\})\setminus\{g'\}$. Then
$0=A_{\sigma}A_\tau=A_{g}A_\tau$, so condition~(b) holds.
\end{rem}

We now turn to examples. 
The extreme cases are easy to describe:

\begin{eg}\label{eg:IntProp}
Let $\alpha$ be a partial action of a discrete group $G$
on a \ca\ $A$.
\be
\item $\alpha$ has the 1-decomposition property if and only if $A_g=\{0\}$ for all $g\in G\setminus\{1\}$.
This is the \emph{trivial partial action} of $G$ on $A$.
\item If $G$ is finite, then $\alpha$ has the $|G|$-decomposition property if and only if $\alpha$ is global. 
\ee
\end{eg}

Next, we show that the canonical partial action of $G$ on $\mathcal{T}_n(G)$ has the $n$-decomposition property. This is the \emph{prototypical} 
partial action with the $n$-decomposition property. 

\begin{prop}\label{prop:TnGhasInterProp}
Let $G$ be a discrete group and let $n\in\N$. 
Then the partial action $\texttt{Lt}$ 
of $G$ on $\mathcal{T}_n(G)$ described in \autoref{df:TnG} 
has the $n$-decomposition property.
\end{prop}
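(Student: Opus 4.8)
The plan is to work entirely in the C*-algebraic picture and to compute the ideals $A_\tau$ explicitly, exploiting the fact that $\mathcal{T}_n(G)$ carries the discrete topology. Writing $A = C_0(\mathcal{T}_n(G))$, the ideal $A_g$ attached to $g \in G$ is $A_g = C_0(\mathcal{T}_n(G)_g)$, the functions supported on those tuples that contain $g$. Since intersections of such ideals correspond to intersections of the underlying clopen sets, the whole argument reduces to the combinatorics of the sets $\mathcal{T}_n(G)_g$ inside $\mathcal{T}_n(G)$.

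The key step is to identify $A_\tau$ for $\tau \in \mathcal{T}_n(G)$. By definition $A_\tau = \bigcap_{g\in\tau} A_g = C_0\big(\bigcap_{g\in\tau}\mathcal{T}_n(G)_g\big)$, so I first compute the set $\bigcap_{g\in\tau}\mathcal{T}_n(G)_g$. A tuple $\sigma$ lies in this intersection precisely when $\tau \subseteq \sigma$; as $|\sigma| = |\tau| = n$, this forces $\sigma = \tau$. Hence $\bigcap_{g\in\tau}\mathcal{T}_n(G)_g = \{\tau\}$, and $A_\tau$ is the one-dimensional ideal of functions supported at the single point $\tau$.

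With this computation in hand, both conditions of \autoref{df:nIntProp} follow at once. For condition~(a), the sum $\sum_{\tau} A_\tau$ is exactly the subalgebra of finitely supported functions on the discrete space $\mathcal{T}_n(G)$, whose closure is all of $C_0(\mathcal{T}_n(G)) = A$. For condition~(b), if $g \notin \tau$ then $\tau \notin \mathcal{T}_n(G)_g$, so the one-point set $\{\tau\}$ is disjoint from $\mathcal{T}_n(G)_g$; therefore $A_\tau \cap A_g = C_0(\{\tau\}\cap\mathcal{T}_n(G)_g) = C_0(\varnothing) = \{0\}$. There is no genuine obstacle here: the only thing requiring care is correctly translating conditions~(a) and~(b) from the C*-algebraic definition into statements about the clopen sets $\mathcal{T}_n(G)_g$, after which the cardinality observation $\tau \subseteq \sigma \Rightarrow \sigma = \tau$ does all the work. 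As a sanity check, this recovers the description in \autoref{rem:DisjTuples}, since each $A_\tau \cong \mathbb{C}$ gives $A \cong \bigoplus_{\tau} A_\tau \cong c_0(\mathcal{T}_n(G))$.
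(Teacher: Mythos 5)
Your proof is correct and follows essentially the same route as the paper: both arguments reduce to the observation that $A_\tau = C_0\bigl(\bigcap_{g\in\tau}\mathcal{T}_n(G)_g\bigr) = C(\{\tau\})$, from which condition~(b) is immediate and condition~(a) follows from discreteness of $\mathcal{T}_n(G)$. Your version merely spells out the cardinality argument ($\tau\subseteq\sigma$ and $|\sigma|=|\tau|$ force $\sigma=\tau$) that the paper leaves as ``easy to check.''
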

\begin{proof}
For $\tau\in\mathcal{T}_n(G)$, it is easy to check that $C_0(\mathcal{T}_n(G))_\tau=C(\{\tau\})$, so condition (b) in \autoref{df:nIntProp} is satisfied. 
Since $\mathcal{T}_n(G)$ is discrete, we have 
$C_0(\mathcal{T}_n(G))\cong
\bigoplus_{\tau\in\mathcal{T}_n(G)}C(\{\tau\})$, 
so condition (a) is also satisfied.
\end{proof}

Decomposable partial actions form a rich class whose structure is, in a rough sense, a combination of
the trivial action, an action by translation, and a global action of a finite group. 
In this sense, decomposable partial actions are 
much more accessible than general partial actions. The upshot of this approach is the fact that many naturally
occurring partial actions can be written either as iterated extensions or limits of decomposable partial actions; this is in particular true for partial actions of finite groups by \autoref{thm:ExtensionIntersProp}.


The following lemma will be fundamental in most of our analysis.

\begin{lma}\label{lma:OrbitTuples}
Let $\tau\in\mathcal{T}_n(G)$ and set $H_\tau=\{h\in G\colon h\tau=\tau\}$.
Then
\be\item $H_\tau$ is finite a subgroup of $G$, and $|H_\tau|$ divides $n$;
\item With $m_\tau=\frac{n}{|H_\tau|}-1$, there exist 
$x^\tau_1,\ldots,x^\tau_{m_\tau}\in G$ distinct such that
\[\tau=H_\tau\sqcup H_\tau x^\tau_1 \sqcup\ldots\sqcup H_\tau x^\tau_{m_\tau}.\]
\item If $y^\tau_1,\ldots,y^\tau_{m_\tau}\in G$ satisfy $\tau=H_\tau\sqcup H_\tau y^\tau_1 \sqcup\ldots\sqcup H_\tau y^\tau_{m_\tau}$,
then there exist a permutation $\sigma\in S_{m_\tau}$ and $h_1,\ldots,h_{m_\tau}\in H_\tau$ such that $y_j=h_jx_{\sigma(j)}$ for all $j$.
\ee
\end{lma}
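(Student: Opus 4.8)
The plan is to recognize all three statements as consequences of the elementary theory of cosets, applied to the left-multiplication action of $H_\tau$ on $\tau$. The single structural fact underlying everything is that $\tau$ is a disjoint union of right cosets of $H_\tau$; once this is established, (1) is essentially Lagrange's theorem, (2) is just the choice of coset representatives, and (3) is the uniqueness of the coset partition. Throughout, the hypothesis $1\in\tau$ (built into the definition of $\mathcal{T}_n(G)$) is what ties $H_\tau$ to $\tau$.

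First, for (1), I would check that $H_\tau$ is a subgroup by the standard stabilizer argument: $1\in H_\tau$ since $1\cdot\tau=\tau$; if $h,k\in H_\tau$ then $(hk)\tau=h(k\tau)=h\tau=\tau$; and $h\tau=\tau$ yields $h^{-1}\tau=\tau$ upon left-multiplying by $h^{-1}$. Finiteness comes from the observation that $H_\tau\subseteq\tau$: indeed, since $1\in\tau$, every $h\in H_\tau$ satisfies $h=h\cdot 1\in h\tau=\tau$, so $|H_\tau|\leq|\tau|=n<\infty$. For the divisibility, I would note that for any $g\in\tau$ the whole right coset $H_\tau g$ lies in $\tau$, because $hg\in h\tau=\tau$ for every $h\in H_\tau$. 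Hence $\tau$ is a union of right cosets of $H_\tau$; since left translation in a group is free, the bijection $h\mapsto hg$ shows each such coset has exactly $|H_\tau|$ elements, and therefore $|H_\tau|$ divides $|\tau|=n$.

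For (2), the right cosets of $H_\tau$ contained in $\tau$ partition $\tau$, and there are exactly $n/|H_\tau|=m_\tau+1$ of them. One of these is $H_\tau=H_\tau\cdot 1$ (the coset of the identity, which lies in $\tau$ since $1\in\tau$); choosing representatives $x^\tau_1,\ldots,x^\tau_{m_\tau}$ for the remaining cosets---distinct because distinct cosets are disjoint---yields the asserted decomposition. For (3), given a second such decomposition with representatives $y^\tau_1,\ldots,y^\tau_{m_\tau}$, both families list, together with $H_\tau$, exactly the full set of right cosets of $H_\tau$ contained in $\tau$; since this partition of $\tau$ is intrinsic, the sets $\{H_\tau x^\tau_j\}$ and $\{H_\tau y^\tau_j\}$ coincide. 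Hence there is a permutation $\sigma\in S_{m_\tau}$ with $H_\tau y^\tau_j=H_\tau x^\tau_{\sigma(j)}$, and equality of right cosets means $h_j:=y^\tau_j (x^\tau_{\sigma(j)})^{-1}\in H_\tau$, i.e. $y^\tau_j=h_j x^\tau_{\sigma(j)}$, as required.

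There is no deep obstacle here; the only points demanding care are bookkeeping ones. The main thing to track consistently is that $H_\tau$ acts on the \emph{left} while the resulting orbits $H_\tau g$ are \emph{right} cosets, so that the coset-equality criterion ($H_\tau a=H_\tau b \iff ab^{-1}\in H_\tau$) is applied with the correct handedness in (3). The remaining subtlety is simply to observe that the partition of $\tau$ into right cosets is canonical, so that the freedom in the decomposition (2) is exactly the freedom of choosing coset representatives, which is precisely what (3) quantifies.
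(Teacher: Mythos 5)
Your proof is correct and follows essentially the same route as the paper: both arguments rest on the observation that $H_\tau\subseteq\tau$ (hence $H_\tau$ is finite) and that $\tau$ decomposes canonically into the orbits of the left-multiplication action of $H_\tau$, i.e.\ the right cosets $H_\tau g$, each of cardinality $|H_\tau|$ by freeness. Your treatment of part (3) via the coset-equality criterion is a slightly more explicit rendering of the paper's appeal to the uniqueness of the orbit partition, but it is the same argument.
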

\begin{proof}
It is clear that $H_\tau$ is a subgroup of $G$. We will prove items (1) and (2) simultaneously. 
The condition $h\tau=\tau$ for all $h\in H_\tau$ implies that $\tau$ is 
$H_\tau$-invariant, 
and that $H_\tau\subseteq \tau$, since $1\in \tau$. 
Hence $H_\tau$ is finite.
$H_\tau$ acts globally on the finite set $\tau$. Thus $\tau$ is a disjoint union of $H_\tau$ orbits, $\tau=H_\tau x^\tau_0 \sqcup H_\tau x^\tau_1\sqcup \ldots \sqcup H_\tau x^\tau_{m_\tau}$, where $m_\tau+1$ is the cardinality of the orbit space, and $x^\tau_0, x^\tau_1,\ldots,x^\tau_{m_\tau}\in \tau$ are the representatives.
One of these disjoint orbits
must be $H_\tau$, so we assume without loss of generality that 
$x^\tau_0=1$. Since the
action of $H_\tau$ on $\tau$ is free, all orbits have the same cardinality as $H_\tau$, 
from which
it follows that $n=|\tau|=(m_\tau+1)|H_\tau|$. 

Finally, let $y^\tau_1,\ldots,y^\tau_{m_\tau}\in G$ be as in (3) in the statement.
Then these elements determine a decomposition of $\tau$ as a disjoint union of 
$H_\tau$-orbits, so up to a permutation they must agree modulo $H_\tau$ with 
$x^\tau_1,\ldots,x^\tau_{m_\tau}$, as desired.
\end{proof}

 
\begin{nota}\label{nota:Section}
Let $G$ be a discrete group and let $n\in\N$. For $\tau\in\mathcal{T}_n(G)$,
we set 
$H_\tau=\{h\in G\colon h\tau=\tau\}$.
Using \autoref{lma:OrbitTuples}, we set $m_\tau=\frac{n}{|H_\tau|}-1$ and 
fix elements $x_0^\tau=1, x_1^\tau,\ldots,x_{m_\tau}^\tau\in G$ satisfying
\[\tau=H_\tau\sqcup H_\tau x^\tau_1\sqcup \ldots\sqcup H_\tau x^\tau_{m_\tau}.\]
We write $A_{G\cdot\tau}$ for the ideal $\sum_{g\in\tau^{-1}}A_{g\tau}$ of
$A$.
Whenever $\tau$ is understood from the context, we will omit it from the notation
for $H_\tau$, $m_\tau$ and $x_j^\tau$, for $j=1,\ldots,m_\tau$. For example, the 
above identity will be written $\tau=H\sqcup H x_1\sqcup \ldots\sqcup H x_{m}$. 

Let $\mathcal{O}_n(G)$ be the orbit space for the partial system described in \autoref{df:TnG}. We denote by $\kappa\colon \mathcal{T}_n(G)\to \mathcal{O}_n(G)$ the canonical
quotient map, and fix, for the rest of this work, a global section
$s\colon \mathcal{O}_n(G)\to \mathcal{T}_n(G)$ for it. 
For $z\in\mathcal{O}_n(G)$, we write $\tau_z$ for $s(z)$; we write 
$H_z$ for $H_{\tau_z}$; 
we write $m_z$ for $m_{\tau_z}$. (Note that $m_z$ is really independent of 
the choice of the section, unlike $H_z$ or $\tau_z$.)
\end{nota}

\begin{prop}\label{prop:lowerpa}
Let $n\in\N$, let $G$ be a discrete group, and let $\tau\in\mathcal{T}_n(G)$. Set $X=\{1,x_1,\ldots,x_m\}$ to be the set of representatives of $H_\tau$-classes.
For $g\in G$, set $X_g=\{x\in X\colon g\in x^{-1}\tau\}$. Then
\begin{enumerate}
\item For $g\in G$ and $x\in X_{g^{-1}}$ there is a
   unique $\sigma_g(x)\in X_g$ such that $g\in
   \sigma_g(x)^{-1}Hx$. 
\item $((X_g)_{g\in G}, (\sigma_g)_{g\in G})$ is
   a partial action of $G$ on $X$.   
\end{enumerate}
\end{prop}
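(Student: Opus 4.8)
The plan is to prove Proposition~\ref{prop:lowerpa}, which asserts that the data $((X_g)_{g\in G},(\sigma_g)_{g\in G})$ obtained by restricting the left-multiplication partial action to the set $X$ of coset representatives forms a genuine partial action of $G$. Let me think carefully about what is being claimed and how the pieces fit together.

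We have a fixed $\tau \in \mathcal{T}_n(G)$, with $H = H_\tau = \{h : h\tau = \tau\}$ (finite by Lemma~\ref{lma:OrbitTuples}), and the decomposition $\tau = H \sqcup Hx_1 \sqcup \dots \sqcup Hx_m$ into right $H$-cosets, with representatives $X = \{1, x_1, \dots, x_m\}$. For $g \in G$, the set $X_g = \{x \in X : g \in x^{-1}\tau\}$ picks out those representatives $x$ for which the translate $g$ lands inside $x^{-1}\tau$.

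So the background picture is: $\mathcal{T}_n(G)$ carries the partial action $\texttt{Lt}$ of left multiplication, and the orbit $G\cdot\tau$ decomposes. The key underlying fact is that, for the left-multiplication action, the elements $g^{-1}\tau$ (as $g$ ranges over $\tau$) are exactly the tuples in the orbit $G\cdot\tau$ that contain $1$; more precisely there should be a bijection between $X$ and the "left cosets" $\{x^{-1}H : x \in X\}$, or between $X$ and the set of tuples in the orbit. Let me work out the candidate for $\sigma_g$.

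Let me examine item (1). For $g \in G$ and $x \in X_{g^{-1}}$, I want a unique $\sigma_g(x) \in X_g$ with $g \in \sigma_g(x)^{-1} H x$. First interpret $x \in X_{g^{-1}}$: this means $g^{-1} \in x^{-1}\tau$, i.e. $xg^{-1} \in \tau$, i.e. $xg^{-1} \in Hx_j$ for a (unique) representative $x_j$, i.e. $xg^{-1} = hx_j$ for some $h \in H$. Then $g = x_j^{-1}h^{-1}x$, so $g \in x_j^{-1}Hx$, which says $x_j \in X_g$ would be the candidate: indeed $\sigma_g(x) = x_j$ satisfies $g \in x_j^{-1}Hx$. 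The verification has three parts. First, existence: since $xg^{-1} \in \tau = \bigsqcup Hx_j$, it lies in a unique coset $Hx_j$, giving $x_j = \sigma_g(x)$. Second, membership $\sigma_g(x) \in X_g$: I must check $g \in x_j^{-1}\tau$; from $g = x_j^{-1}h^{-1}x$ and $x \in \tau$ (since $x \in X \subseteq \tau$, as $x_0 = 1 \in \tau$ and each $Hx_j \subseteq \tau$), and $h^{-1}x \in Hx \subseteq \tau$... wait, I need $h^{-1}x \in \tau$, which holds because $x \in \tau$ and $\tau$ is $H$-invariant, so $x_j^{-1}(h^{-1}x) = g$ gives $g \in x_j^{-1}\tau$, i.e. $x_j \in X_g$. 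Third, uniqueness: if both $x_j, x_k \in X_g$ satisfy $g \in x_j^{-1}Hx \cap x_k^{-1}Hx$, then $x_j^{-1}Hx = x_k^{-1}Hx$ forces $Hx_j = Hx_k$ (multiply on the right by $x^{-1}$, using that $H$-cosets partition), hence $x_j = x_k$ as both are chosen representatives. This is the core computation, and it is the step I expect to require the most care, since one must track cosets on the correct side and use $H$-invariance of $\tau$ repeatedly.

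For item (2), I must verify the axioms of a partial action: $\sigma_1 = \id_X$ (clear, since $X_1 = X$ and $g=1$ forces $\sigma_1(x)^{-1}Hx \ni 1$, giving $\sigma_1(x) = x$), each $\sigma_g \colon X_{g^{-1}} \to X_g$ is a bijection with inverse $\sigma_{g^{-1}}$, and the cocycle-type compatibility $\sigma_g \circ \sigma_h$ extends to $\sigma_{gh}$ where defined. The cleanest route is conceptual rather than computational: I would identify this data as the restriction of $\texttt{Lt}$ on $\mathcal{T}_n(G)$ to a transversal. Concretely, the map $X \to G\cdot\tau$, $x \mapsto x^{-1}\tau$, identifies $X$ with the tuples in the orbit containing $1$ (these being exactly $x^{-1}\tau$ for $x \in \tau$, grouped by $H$-coset since $hx$ and $x$ give the same tuple $(hx)^{-1}\tau = x^{-1}h^{-1}\tau = x^{-1}\tau$). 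Under this identification, $x \in X_{g^{-1}}$ corresponds to $\texttt{Lt}_g$ being defined on $x^{-1}\tau$, and $\sigma_g(x)$ corresponds to $g x^{-1}\tau = \sigma_g(x)^{-1}\tau$. Since $\texttt{Lt}$ is already a partial action (it is a restriction of the genuine action of $G$ on subsets of $G$), all the axioms transport. The main obstacle is verifying that this identification is well-defined and equivariant—that $gx^{-1}\tau$ again has the form $y^{-1}\tau$ for a unique $y \in X$, which is precisely item (1) restated. Thus (1) does the real work, and (2) follows either by this transport-of-structure argument or by a direct but routine check of the three axioms using the same coset bookkeeping as in (1).
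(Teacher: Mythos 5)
Your argument is correct, and part (1) is in substance identical to the paper's: the authors simply note that $x^{-1}\tau=\bigsqcup_{k=0}^{m}x^{-1}Hx_k$ and let existence, membership of $\sigma_g(x)$ in $X_g$, and uniqueness follow from the disjointness of these cosets, which is exactly the bookkeeping you spell out in detail. For part (2) the paper takes the direct route: it checks $X_1=X$ and $\sigma_1=\id_X$, and then verifies in two lines that if $x\in X_{g_2^{-1}}$ and $\sigma_{g_2}(x)\in X_{g_1^{-1}}$, then $(g_1g_2)^{-1}\in x^{-1}H\sigma_{g_2}(x)\cdot\sigma_{g_2}(x)^{-1}H\sigma_{g_1}(\sigma_{g_2}(x))= x^{-1}H\sigma_{g_1}(\sigma_{g_2}(x))\subseteq x^{-1}\tau$, so that $x\in X_{(g_1g_2)^{-1}}$ and $\sigma_{g_1g_2}(x)=\sigma_{g_1}(\sigma_{g_2}(x))$ by the uniqueness in (1). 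Your preferred route for (2) --- transporting the structure along the bijection $X\to G\cdot\tau$, $x\mapsto x^{-1}\tau$, which is equivariant because $g\in\sigma_g(x)^{-1}Hx$ gives $gx^{-1}\tau=\sigma_g(x)^{-1}\tau$ --- is genuinely different in presentation and arguably more illuminating: it exhibits $\sigma$ as the partial action induced on a transversal of the orbit $G\cdot\tau$ inside $\mathcal{T}_n(G)$, and so explains \emph{why} the proposition holds. What the paper's direct check buys is brevity and self-containment: it avoids having to justify separately that $\texttt{Lt}$ restricted to an orbit is a partial action and that the identification is well defined. Either way, the substantive content is the coset computation in (1), which you have right.
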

\begin{proof}
Since $x_j^{-1}\tau=\bigsqcup_{k=0}^mx_j^{-1}Hx_k$, part~(1)  follows. As for~(2), note first that
$X_1=X$ and
$\sigma_1=\id_X$. Suppose that $x\in X_{g_2^{-1}}$ 
satisfies $\sigma_{g_2}(x)\in X_{g_1^{-1}}$, that is
\[g_2^{-1}\in
x^{-1}H\sigma_{g_2}(x) \ \mbox{ and } \ g_1^{-1}\in
\sigma_{g_2}(x)^{-1}H\sigma_{g_1}(\sigma_{g_2}(x)).\] Then 
$(g_1g_2)^{-1}\in x^{-1}H\sigma_{g_1}(\sigma_{g_2}(x))\subseteq
x^{-1}\tau$. This shows that $x\in X_{(g_1g_2)^{-1}}$, and
$\sigma_{g_1g_2}(x)=\sigma_{g_1}(\sigma_{g_2}(x))$ by
uniqueness of the left member of the equality. 
\end{proof}



\begin{prop}\label{prop:EquivDecomp}
Let $G$ be a discrete group, let $A$ be a \ca, let $n\in\N$, let 
$\alpha=((A_g)_{g\in G}, (\alpha_g)_{g\in G})$ be a partial action of $G$ on $A$ with the
$n$-decomposition property, and let 
$\tau\in\mathcal{T}_n(G)$. Adopt the conventions from \autoref{nota:Section}. Then:
\be
\item The restriction of $\alpha|_{H_\tau}$ to $A_\tau$ is a global action;
\item The ideal $A_{G\cdot \tau}$ is $G$-invariant, and for $g\in G$
  one has 
\[(A_{G\cdot \tau})_g=\begin{cases*}
      \ \ \ \ \ \ \ \ \ \ \ \{0\} & if $g\notin \tau^{-1}\cdot\tau$ \\
      \displaystyle{\sum\limits_{0\leq j\leq m\colon g\in x_j^{-1}\tau}}A_{x_j^{-1}\tau}& if $g\in \tau^{-1}\cdot\tau$.
    \end{cases*}\]
\item For $\sigma\in \mathcal{T}_n(G)$, we have 
$A_{G\cdot \sigma}\cap A_{G\cdot\tau}=
\{0\}$ if $\sigma\notin G\cdot \tau$.
\item There is a natural $G$-equivariant isomorphism
\[\varphi\colon \bigoplus\limits_{z\in\mathcal{O}_n(G)} A_{G\cdot \tau_z} \to A\]
given by $\varphi(a)=\sum_{z\in \mathcal{O}_n(G)} a_z$ for all
$a=(a_z)_{z\in\mathcal{O}_n(G)}$.
\ee
\end{prop}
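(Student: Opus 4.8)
The plan is to establish the four assertions in the order stated, deriving (4) from (2) and (3) and relying throughout on three basic facts: the ideals $A_\sigma$, $\sigma\in\mathcal{T}_n(G)$, are pairwise orthogonal (\autoref{rem:DisjTuples}); one has $A_\sigma\subseteq A_g$ whenever $g\in\sigma$, while $A_\sigma\cap A_g=\{0\}$ when $g\notin\sigma$ by \autoref{df:nIntProp}(b); and $\alpha_g(A_\sigma)=A_{g\sigma}$ whenever $g^{-1}\in\sigma$. For (1), I would first note that for $h\in H_\tau$ both $h$ and $h^{-1}$ lie in $H_\tau\subseteq\tau$, so $A_\tau\subseteq A_{h^{-1}}$, whence $\alpha_h$ is defined on all of $A_\tau$ and $\alpha_h(A_\tau)=A_{h\tau}=A_\tau$; thus each $\alpha_h$ restricts to an automorphism of $A_\tau$. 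Since $\alpha_{h'}(A_\tau)=A_\tau\subseteq A_h$, the partial-action axiom (that $\alpha_{hh'}$ extends $\alpha_h\circ\alpha_{h'}$) forces $\alpha_{hh'}|_{A_\tau}=\alpha_h|_{A_\tau}\circ\alpha_{h'}|_{A_\tau}$, so $h\mapsto\alpha_h|_{A_\tau}$ is a homomorphism $H_\tau\to\Aut(A_\tau)$, i.e. a global action.

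The core is (2). Using \autoref{lma:OrbitTuples} I would reindex the orbit: since $\tau^{-1}=\bigsqcup_{j=0}^{m}x_j^{-1}H_\tau$, every $g\tau$ with $g\in\tau^{-1}$ equals some $x_j^{-1}\tau$, and the tuples $x_0^{-1}\tau,\ldots,x_m^{-1}\tau$ are pairwise distinct; hence $G\cdot\tau=\{x_j^{-1}\tau\colon 0\le j\le m\}$ and $A_{G\cdot\tau}=\bigoplus_{j=0}^m A_{x_j^{-1}\tau}$ is a finite sum of orthogonal ideals, in particular a closed ideal of $A$. For the formula I compute $A_{G\cdot\tau}\cap A_g$ componentwise: for each $j$ one has $A_{x_j^{-1}\tau}\subseteq A_g$ if $g\in x_j^{-1}\tau$, and $A_{x_j^{-1}\tau}\cap A_g=\{0\}$ otherwise; projecting an element of $A_{G\cdot\tau}\cap A_g$ onto each summand via an approximate identity of that summand (which stays in the ideal $A_g$) shows $A_{G\cdot\tau}\cap A_g=\bigoplus_{j\colon g\in x_j^{-1}\tau}A_{x_j^{-1}\tau}$. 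Since $x_j^{-1}\tau=\bigsqcup_k x_j^{-1}H_\tau x_k$, one gets $\bigcup_j x_j^{-1}\tau=\tau^{-1}\tau$, so this intersection vanishes exactly when $g\notin\tau^{-1}\tau$, giving the stated formula. For $G$-invariance, I observe that $\sigma\mapsto g\sigma$ is a bijection from $\{\sigma\in G\cdot\tau\colon g^{-1}\in\sigma\}$ onto $\{\sigma\in G\cdot\tau\colon g\in\sigma\}$ with $\alpha_g(A_\sigma)=A_{g\sigma}$; summing over these summands shows $\alpha_g$ carries $A_{G\cdot\tau}\cap A_{g^{-1}}$ onto $A_{G\cdot\tau}\cap A_g$.

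Parts (3) and (4) are then formal. For (3), if $\sigma\notin G\cdot\tau$ the orbits $G\cdot\sigma$ and $G\cdot\tau$ are disjoint, so every $A_\rho$ with $\rho\in G\cdot\sigma$ is orthogonal to every $A_{\rho'}$ with $\rho'\in G\cdot\tau$ by \autoref{rem:DisjTuples}; hence $A_{G\cdot\sigma}A_{G\cdot\tau}=\{0\}$ and therefore $A_{G\cdot\sigma}\cap A_{G\cdot\tau}=\{0\}$. For (4), part (3) says the ideals $A_{G\cdot\tau_z}$, $z\in\mathcal{O}_n(G)$, are pairwise orthogonal, so $\varphi$ is a well-defined isometric (hence injective) $*$-homomorphism on the $C^*$-direct sum. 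It is surjective because $\sum_z A_{G\cdot\tau_z}=\sum_{\sigma\in\mathcal{T}_n(G)}A_\sigma$ (the orbits $G\cdot\tau_z=\kappa^{-1}(z)$ partition $\mathcal{T}_n(G)$), which is dense in $A$ by \autoref{df:nIntProp}(a). Equivariance follows from (2): each $A_{G\cdot\tau_z}$ is $G$-invariant, and the same approximate-identity argument yields $A_{g^{-1}}=\bigoplus_z\bigl(A_{G\cdot\tau_z}\cap A_{g^{-1}}\bigr)$, so $\varphi$ identifies the domains and intertwines the maps $\alpha_g$.

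I expect the main obstacle to be the bookkeeping in (2): correctly reindexing $G\cdot\tau$ through the coset decomposition of \autoref{lma:OrbitTuples}, and verifying that $\bigcup_j x_j^{-1}\tau$ is exactly $\tau^{-1}\tau$, together with the routine but easily misstated approximate-identity argument showing that intersecting the orthogonal direct sum $A_{G\cdot\tau}$ with an ideal distributes over the summands. Once this is in place, everything else reduces to orthogonality of the $A_\sigma$ and the two identities $A_\sigma\subseteq A_g$ for $g\in\sigma$ and $\alpha_g(A_\sigma)=A_{g\sigma}$.
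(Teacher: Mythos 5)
Your proposal is correct and follows essentially the same route as the paper: part (1) from $H_\tau\tau=\tau$, part (2) by computing $A_{G\cdot\tau}\cap A_g$ summand-by-summand using condition (b) of the decomposition property, part (3) from the pairwise orthogonality of the ideals $A_\sigma$ (\autoref{rem:DisjTuples}), and part (4) as a formal consequence of (3) and condition (a). The only cosmetic difference is that you argue (3) directly via orthogonality of the orbits while the paper phrases it as a contrapositive, and you spell out the approximate-identity projection argument that the paper leaves implicit.
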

\begin{proof}
(1). Since $H\tau=\tau$, it follows for every $h\in H$ that 
$A_h\cap A_\tau=A_\tau$ and moreover $\alpha_h(A_\tau)=A_\tau$. Hence $\alpha|_H$
induces a global action on $A_\tau$.

(2). Fix $g\in G$. We have $A_{g^{-1}}\cap A_\tau= \{0\}$
if $g^{-1}\notin \tau$; and $A_{g^{-1}}\cap A_\tau= A_\tau$ if $g^{-1}\in \tau$, in which case $\alpha_g(A_\tau)= A_{g\cdot \tau}$. Hence $A_{G\cdot \tau}$ is invariant and there is a well-defined restricted partial action of $G$ on it. For $g\in G$, we have 
\[(A_{G\cdot \tau})_g=A_{G\cdot \tau}\cap A_g=\sum\limits_{j=0}^{m}A_{x_j^{-1}\tau}\cap A_g=\sum\limits_{0\leq j\leq m\colon g\in x_j^{-1}\tau}A_{x_j^{-1}\tau},\]
where at the last equality we use the decomposition property.
In particular, if $g\notin \tau^{-1}\cdot\tau$, this domain is trivial. 

(3). We prove the contrapositive, so we
suppose that $A_{G\cdot \sigma}\cap A_{G\cdot\tau}\neq \{0\}$
and will show that $\sigma\in G\cdot \tau$. 
Fix $g\in \tau^{-1}$ and $h\in \sigma^{-1}$ with 
$A_{g\tau}\cap A_{h\sigma}\neq \{0\}$. 
By \autoref{rem:DisjTuples}, we have $g\tau=h\sigma$. In particular, 
$h\in g\tau$ and thus 
$(h^{-1}g)\tau=\sigma$, as desired.

(4). Note that $\varphi$ is a
homomorphism by part~(3) above. Moreover, 
it is clearly injective and equivariant, and it is surjective
by condition~(a) of \autoref{df:nIntProp}.
\end{proof}

\begin{rem}\label{rem:EquivDecomp}
Using part~(4) of the proposition above, a number of facts
about decomposable partial actions can be reduced to
the $G$-invariant direct summands $A_{G\cdot\tau_z}$ 
In practice, for many purposes it suffices
to work with a single tuple $\tau\in\mathcal{T}_n(G)$ and the induced partial
action on $A_{G\cdot \tau}$.
\end{rem}

The following lemma will be used repeatedly (see \autoref{nota:Section}).

\begin{lma}\label{rem:QuotientMapsAGtau}
Let $G$ be a discrete group, let $A$ be a \ca, let $n\in\N$, and let $\alpha$ be a partial 
action of $G$ on $A$ with the $n$-decomposition property.
Fix $\tau\in\mathcal{T}_n(G)$. 
\be\item There are canonical quotient 
maps $\pi_j\colon A_{G\cdot \tau}\to A_{x_j^{-1}\tau}$, for $j=0,\ldots,m$, which can be explicitly 
described as follows: if $(e_\lambda)_{\lambda\in\Lambda}$ is any 
approximate identity for $A_{x_j^{-1}\tau}$, then 
$\pi_j(a)=\lim_{\lambda}ae_\lambda$ for all $a\in A_{G\cdot \tau}$.
\item The map $\pi_j$ is independent of 
the choice of the representative in $Hx_j$.
\ee
\end{lma}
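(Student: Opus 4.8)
The plan is to realize $A_{G\cdot\tau}$ as an internal $C^*$-algebraic direct sum of the mutually orthogonal ideals $A_{x_j^{-1}\tau}$, for $j=0,\ldots,m$, and to take $\pi_j$ to be the canonical projection onto the $j$-th summand. Once this decomposition is in place, both the approximate-identity formula in (1) and the independence statement in (2) drop out formally.

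First I would verify that the tuples $\tau=x_0^{-1}\tau, x_1^{-1}\tau,\ldots,x_m^{-1}\tau$ are pairwise distinct members of $\mathcal{T}_n(G)$. Each lies in $\mathcal{T}_n(G)$ because $x_j\in\tau$ gives $1=x_j^{-1}x_j\in x_j^{-1}\tau$, while $|x_j^{-1}\tau|=n$; and $x_i^{-1}\tau=x_j^{-1}\tau$ would force $x_jx_i^{-1}\in H$, i.e.\ $Hx_i=Hx_j$, hence $i=j$ by the choice of coset representatives in \autoref{lma:OrbitTuples}. Next, since $\tau^{-1}=\bigsqcup_{j=0}^m x_j^{-1}H$, running $g$ over $\tau^{-1}$ shows that $g\tau$ takes exactly the values $x_j^{-1}\tau$, so by the definition of $A_{G\cdot\tau}$ one has $A_{G\cdot\tau}=\sum_{j=0}^m A_{x_j^{-1}\tau}$. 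As the tuples are distinct, \autoref{rem:DisjTuples} makes the ideals $A_{x_j^{-1}\tau}$ pairwise orthogonal, and a finite sum of mutually orthogonal closed ideals is itself closed and coincides with their $C^*$-direct sum; thus $A_{G\cdot\tau}=\bigoplus_{j=0}^m A_{x_j^{-1}\tau}$.

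For part~(1), $\pi_j$ is then the projection of this direct sum onto the $j$-th summand, a surjective $*$-homomorphism with kernel $\bigoplus_{k\ne j}A_{x_k^{-1}\tau}$, and hence a quotient map. Writing $a\in A_{G\cdot\tau}$ uniquely as $a=\sum_k a_k$ with $a_k\in A_{x_k^{-1}\tau}$, orthogonality annihilates $a_k e_\lambda$ for $k\ne j$, while $a_j e_\lambda\to a_j$, giving $ae_\lambda\to a_j=\pi_j(a)$ for any approximate identity $(e_\lambda)_\lambda$ of $A_{x_j^{-1}\tau}$; in particular this exhibits $\pi_j$ as independent of the chosen approximate identity. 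For part~(2), I would observe that the summand $A_{x_j^{-1}\tau}$ is unchanged when $x_j$ is replaced by $hx_j$ with $h\in H$, since $(hx_j)^{-1}\tau=x_j^{-1}h^{-1}\tau=x_j^{-1}\tau$ using $h^{-1}\tau=\tau$; as $\pi_j$ is intrinsically the projection onto that fixed summand, it does not depend on the representative in $Hx_j$.

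The argument is largely bookkeeping, and the one step that genuinely requires care is the pairwise distinctness of the tuples $x_j^{-1}\tau$: everything else—the orthogonality via \autoref{rem:DisjTuples}, the closedness of the finite orthogonal sum, and the two asserted formulas—follows formally once the direct-sum decomposition $A_{G\cdot\tau}=\bigoplus_{j=0}^m A_{x_j^{-1}\tau}$ has been secured.
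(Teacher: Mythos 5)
Your proposal is correct and follows the same route as the paper: the paper's proof of (1) is exactly the observation that \autoref{rem:DisjTuples} yields $A_{G\cdot\tau}\cong\bigoplus_{j=0}^m A_{x_j^{-1}\tau}$, and its proof of (2) is the identity $A_{(hx_j)^{-1}\tau}=A_{x_j^{-1}\tau}$; you have simply filled in the supporting details (distinctness of the tuples $x_j^{-1}\tau$, closedness of the finite orthogonal sum, and the approximate-identity computation), all of which check out.
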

\begin{proof} (1). Follows from \autoref{rem:DisjTuples}, which implies that $A_{G\cdot\tau}\cong \bigoplus_{j=0}^{m}A_{x_j^{-1}\tau}$. 
	
(2). Let $y_0,\ldots,y_m\in G$ and $h_1,\ldots, h_m\in H$ be elements satisfying
$y_j=h_jx_j$ for all $j=0,\ldots,m$. The claim follows since $A_{y_j^{-1}\tau}=A_{x_j^{-1}\tau}$.
\end{proof}

In the context of the above lemma, and whenever the tuple $\tau$ is not 
clear from the context, we will write $\pi_j^\tau$ instead of $\pi_j$.

Our next goal is to show that the domains of partial actions with the
decomposition property admit
particularly well-behaved kind of approximate identities, which we call an ``equivariant system
of approximate identities''. 

\begin{df}\label{df:SystDynApproxId}
Let $G$ be a discrete group, and let 
$\alpha=((A_g)_{g\in G}, (\alpha_g)_{g\in G})$ be a partial action of $G$ on a \ca\ $A$. A \emph{system of equivariant approximate identities} for $\alpha$ is a choice, for every $g\in G$, of an approximate 
identity $(e_g^\lambda)_{\lambda\in\Lambda}$ of $A_g$, satisfying $\alpha_g(e_h^\lambda e_{g^{-1}}^\lambda)=e_{gh}^\lambda e_{g}^\lambda$
for all $g,h\in G$ and all $\lambda\in\Lambda$.
\end{df}

The above definition is inspired by the case of unital partial actions, where the units of the respective domains
form a system of equivariant approximate identities. Explicitly, if $1_g$ denotes the unit of $A_g$, then 
one has $\alpha_g(1_h1_{g^{-1}})=1_{gh}1_g$ for all $g,h\in G$. On the other 
hand, global actions of finite groups always admit such systems: it suffices to
consider a $G$-invariant approximate identity of $A$.

Systems of equivariant approximate identities fail to exist in general, but 
we show in the following proposition that decomposable systems
always possess them. 

\begin{prop}\label{thm:ApproxIdentitiesIntProp}
Let $G$ be a discrete group, let $A$ be a \ca, and let $\alpha=((A_g)_{g\in G}, (\alpha_g)_{g\in G})$ be a 
decomposable partial 
action of $G$ on $A$. Then there exists a system of equivariant approximate identities for $\alpha$.
\end{prop}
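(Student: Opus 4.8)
The plan is to construct the system of equivariant approximate identities tuple-by-tuple, exploiting the direct-sum decomposition from \autoref{prop:EquivDecomp}(4). Since $A\cong\bigoplus_{z\in\mathcal{O}_n(G)}A_{G\cdot\tau_z}$ as a $G$-equivariant direct sum, and the required compatibility condition $\alpha_g(e_h^\lambda e_{g^{-1}}^\lambda)=e_{gh}^\lambda e_g^\lambda$ is checked one $g,h$ at a time (and the two sides live in the same orbit-summand), it suffices to build such a system for each $G\cdot\tau$ separately and then assemble them coordinatewise. So first I would fix a single $\tau\in\mathcal{T}_n(G)$ and adopt the conventions of \autoref{nota:Section}, reducing to producing a compatible family of approximate identities for the ideals $(A_{G\cdot\tau})_g$.

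Next I would use the concrete description of the domains from \autoref{prop:EquivDecomp}(2): for $g\in\tau^{-1}\cdot\tau$ the ideal $(A_{G\cdot\tau})_g$ is the orthogonal direct sum $\bigoplus_{0\le j\le m\colon g\in x_j^{-1}\tau}A_{x_j^{-1}\tau}$, and it is $\{0\}$ otherwise. The key idea is that all these domains are built out of the finitely many ideals $A_{x_j^{-1}\tau}$, which are moreover $G$-translates of $A_\tau$ via $\alpha_{x_j^{-1}}$. By \autoref{prop:EquivDecomp}(1), the finite group $H=H_\tau$ acts \emph{globally} on $A_\tau$, so I can choose an $H$-invariant approximate identity $(f^\lambda)_{\lambda\in\Lambda}$ of $A_\tau$; invariance means $\alpha_h(f^\lambda)=f^\lambda$ for all $h\in H$. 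The plan is then to \emph{transport} this single net to every ideal $A_{x_j^{-1}\tau}$ by setting $e_{x_j^{-1}\tau}^\lambda:=\alpha_{x_j^{-1}}(f^\lambda)$, and more generally to define $e_g^\lambda$ for a general $g\in G$ by summing the appropriate transported pieces over the indices $j$ with $g\in x_j^{-1}\tau$, using the orthogonality of the summands so that the sum really is an approximate identity of $(A_{G\cdot\tau})_g$.

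The main obstacle, and the heart of the verification, will be showing that this prescription is \emph{well-defined} and then that it satisfies the equivariance relation. Well-definedness requires that $\alpha_{x_j^{-1}}(f^\lambda)$ does not depend on the choice of coset representative $x_j$ within $Hx_j$: if $x_j'=h x_j$ with $h\in H$, then $\alpha_{x_j'^{-1}}(f^\lambda)=\alpha_{x_j^{-1}}\alpha_{h^{-1}}(f^\lambda)=\alpha_{x_j^{-1}}(f^\lambda)$ precisely because $f^\lambda$ is $H$-invariant; this is exactly where $H$-invariance is needed, and it dovetails with \autoref{rem:QuotientMapsAGtau}(2). For the equivariance relation, I would expand both sides using the definition and the product structure: since distinct summands $A_{x_j^{-1}\tau}$ are orthogonal (\autoref{rem:DisjTuples}), the products $e_h^\lambda e_{g^{-1}}^\lambda$ collapse onto the common indices, and applying $\alpha_g$ moves the $j$-th summand $A_{x_j^{-1}\tau}$ to $A_{gx_j^{-1}\tau}=A_{x_k^{-1}\tau}$ for the representative $x_k$ determined by the partial action $\sigma$ on the index set $X$ of \autoref{prop:lowerpa}. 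The bookkeeping of which indices survive — governed precisely by the conditions $g\in x_j^{-1}\tau$ and $gh\in x_k^{-1}\tau$ — is where the combinatorics of \autoref{prop:lowerpa} does the work, and matching the transported net on the left with $e_{gh}^\lambda e_g^\lambda$ on the right reduces, on each surviving summand, to the cocycle-type identity $\alpha_g\alpha_{x_j^{-1}}=\alpha_{x_k^{-1}}\alpha_{h'}$ for some $h'\in H$, which again uses $H$-invariance of $f^\lambda$ to kill the $\alpha_{h'}$. Once this is verified for a single orbit, I would conclude by taking the coordinatewise net over all $z\in\mathcal{O}_n(G)$, noting that the relation holds summand-by-summand and hence globally.
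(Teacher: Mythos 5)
Your proposal is correct and follows essentially the same route as the paper: reduce to a single orbit ideal $A_{G\cdot\tau}$, build the approximate identity of $(A_{G\cdot\tau})_g$ by transporting an approximate identity of $A_\tau$ to the summands $A_{x_j^{-1}\tau}$ with $g\in x_j^{-1}\tau$, and verify the equivariance relation summand-by-summand using orthogonality. The only cosmetic difference is that you first pass to an $H_\tau$-invariant approximate identity of $A_\tau$ and then transport by $\alpha_{x_j^{-1}}$, whereas the paper transports an arbitrary approximate identity by $\alpha_{x_j^{-1}h}$ and averages over $h\in H_\tau$ inside the defining formula; these yield the same nets.
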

\begin{proof}
Let $n\in \N$ be such that $\alpha$ has the $n$-decomposition property. 
By \autoref{rem:EquivDecomp}, it suffices to show that $A_{G\cdot\tau}$
possesses a system of equivariant approximate identities for every $\tau\in\mathcal{T}_n(G)$. 
Fix $\tau\in\mathcal{T}_n(G)$ and fix an approximate identity 
$(e^{\lambda})_{\lambda\in \Lambda}$ for $A_{\tau}$. We use \autoref{nota:Section}. For $g\in G$ and $\lambda\in \Lambda$, define

\[e^{\lambda}_g=\frac{1}{|H|}\sum\limits_{j=0}^{m}\mathbbm{1}_{\{g\in x_j^{-1}\tau\}}\sum\limits_{h\in H}\alpha_{x_j^{-1}h}(e^{\lambda}). \]
Notice that $e^{\lambda}_g\in (A_{G\cdot\tau})_g$ is a positive contraction. We claim that $(e^{\lambda}_g)_{\lambda\in\Lambda}$ is an approximate
identity for $(A_{G\cdot\tau})_g$. Since $(A_{G\cdot\tau})_g=\{0\}$ whenever $g\notin \tau^{-1}\cdot \tau$, we may assume that $g\in \tau^{-1}\cdot \tau$. 
By part~(2) of \autoref{prop:EquivDecomp}, 
it is enough to show that $(e_g^{\lambda})$ is an approximate identity for $A_{x_j^{-1}\tau}$ for any $j=0,\ldots,m$ such that $g\in x_j^{-1}\tau$. Fix such $j$ and let $a\in A_{x_j^{-1}\tau}$.
Using at the first step that $\sum_{h\in H} \alpha_{x_k^{-1}h}(e^{\lambda})$ belongs to the ideal $A_{x_k^{-1}\tau}$ which is orthogonal to $A_{x_j^{-1}\tau}$ when $k\neq j$, and at the second 
that $(\alpha_{x_j^{-1}h}(e^{\lambda}))_{\lambda\in\Lambda}$ is an 
approximate identity of $A_{x_j^{-1}\tau}$, we conclude that
\[\lim_{\lambda\in\Lambda} e^{\lambda}_g a=
 \lim_{\lambda\in\Lambda}\frac{1}{|H|}\sum_{h\in H}\alpha_{x_j^{-1}h}(e^{\lambda})a=a.
\]

Let $g_1,g_2\in G$ and let $\lambda\in\Lambda$. We will show that 
$\alpha_{g_1}(e_{g_2}^\lambda e_{g_1^{-1}}^\lambda)=e_{g_1g_2}^\lambda e_{g_1}^\lambda$.
To ease notation, we drop the superscript $\lambda$ everywhere, as well as the denominator $|H|$. For $\ell=0,\ldots,m$, 
let $\pi_\ell\colon A_{G\cdot\tau}\to A_{x_\ell^{-1}\tau}$ be the map from \autoref{rem:QuotientMapsAGtau}. Then
\begin{align*}
\pi_\ell(e_{g_1g_2}e_{g_1})&=\sum\limits_{j,k=0}^{m}\sum\limits_{h,t\in H}\pi_\ell\big(\big(\mathbbm{1}_{\{g_1g_2\in x_j^{-1}\tau\}}\alpha_{x_j^{-1}h}(e)\big)\cdot \big(\mathbbm{1}_{\{g_1\in x_k^{-1}\tau\}}\alpha_{x_k^{-1}t}(e)\big)\big)\\
&=\sum\limits_{j=0}^{m}\sum\limits_{h,t\in H}\pi_\ell\big(\mathbbm{1}_{\{g_1g_2\in x_j^{-1}\tau\}}\mathbbm{1}_{\{g_1\in x_j^{-1}\tau\}}\alpha_{x_j^{-1}h}(e)\alpha_{x_j^{-1}t}(e)\big)\\
&=\sum\limits_{h,t\in H}\mathbbm{1}_{\{g_1g_2\in x_\ell^{-1}\tau\}}\mathbbm{1}_{\{g_1\in x_\ell^{-1}\tau\}} \alpha_{x_\ell^{-1}h}(e)\alpha_{x_\ell^{-1}t}(e).
\end{align*}

On the other hand,
\begin{align*}
\pi_\ell(\alpha_{g_1}(e_{g_2}e_{g_1^{-1}}))&=\sum\limits_{k=0}^{m}\sum\limits_{s,r\in H}\pi_\ell\big(\mathbbm{1}_{\{g_2\in x_k^{-1}\tau\}}\mathbbm{1}_{\{g_1^{-1}\in x_k^{-1}\tau\}}\alpha_{g_1}(\alpha_{x_k^{-1}s}(e)\alpha_{x_k^{-1}r}(e))\big)\\
&=\sum\limits_{k=0}^{m}\sum\limits_{s,r\in H}\mathbbm{1}_{\{g_2\in x_k^{-1}\tau\}}\mathbbm{1}_{\{g_1\in x_\ell^{-1}Hx_k\}}\alpha_{g_1}(\alpha_{x_k^{-1}s}(e)\alpha_{x_k^{-1}r}(e))\\
&=\sum\limits_{k=0}^{m}\sum\limits_{s,r\in H}\mathbbm{1}_{\{g_1g_2\in x_\ell^{-1}\tau\}}\mathbbm{1}_{\{g_1\in x_\ell^{-1}Hx_k\}}\alpha_{g_1}(\alpha_{x_k^{-1}s}(e)\alpha_{x_k^{-1}r}(e))\\
&=\sum\limits_{k=0}^{m}\sum\limits_{s,r\in H}\mathbbm{1}_{\{g_1g_2\in x_\ell^{-1}\tau\}}\mathbbm{1}_{\{g_1\in x_\ell^{-1}Hx_k\}}\alpha_{x_\ell^{-1}s}(e)\alpha_{x_\ell^{-1}r}(e)\\
&=\sum\limits_{s,r\in H}\mathbbm{1}_{\{g_1g_2\in x_\ell^{-1}\tau\}}\mathbbm{1}_{\{g_1\in x_\ell^{-1}\tau\}}\alpha_{x_\ell^{-1}s}(e)\alpha_{x_\ell^{-1}r}(e),
\end{align*}
where at the second and last steps we use that if $g\in x_k^{-1}\tau$, then there is a unique $j\in\{0,\ldots,m\}$ such that $g\in x_k^{-1}Hx_j$. The result follows since $\sum_{j=0}^{m} \pi_j=\id_{A_{G\cdot \tau}}$.
\end{proof}

\section{Enveloping actions}
In this section, we show that decomposable partial actions
are always globalizable, and we 
give an explicit construction of their globalizations. 
We point out that the results in this section apply without major
modifications to algebraic partial actions.


We begin by recalling the
notion of an enveloping action (Definition~3.4 in~\cite{Aba_enveloping_2003}).

\begin{df}\label{df:enveloping}
Let $G$ be a discrete group and let 
$\alpha=((A_g)_{g\in G}, (\alpha_g)_{g\in G})$ be a partial action of $G$ on a \ca\ 
$A$. A triple $(B,\beta,\iota)$ consisting of a \ca\ $B$, a (global) action
$\beta\colon G\to\Aut(B)$ and an embedding $\iota\colon A\to B$ as an ideal, is said to 
be an \emph{enveloping action} of $\alpha$ if the following conditions are
satisfied:
\be\item $A_g=A\cap \beta_g(A)$ for all $g\in G$;
\item $\alpha_g(a)=\beta_g(a)$ for all $a\in A_{g^{-1}}$ and all $g\in G$;
\item $B=\overline{\mathrm{span}}\{\beta_g(a)\colon a\in A, \ g\in G \}$.\ee

We say that $\alpha$ is \emph{globalizable} if there exists an enveloping action.
\end{df}

By Theorem~3.8 in~\cite{Aba_enveloping_2003}, enveloping actions are unique up to
an equivariant isomorphism extending the identity on $A$; for this reason, we
will always refer to \emph{the} enveloping action of a given globalizable 
partial action.

Not every partial action is globalizable, and even when it is, identifying its
enveloping action may turn out to be challenging. Using Ferraro's recent abstract
characterization of globalizability \cite{Fer_construction_2018}, we show next that decomposable partial
actions are always globalizable. Obtaining an 
explicit description of its enveloping action is rather involved, and this is 
done in \autoref{thm:InterPropGlobz}.

\begin{prop}
Let $G$ be a discrete group, let $A$ be a \ca, and let 
$\alpha$ be a decomposable partial action of $G$ on 
$A$. Then $\alpha$ is globalizable. 
\end{prop}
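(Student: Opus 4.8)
The plan is to deduce globalizability from the abstract criterion of Ferraro \cite{Fer_construction_2018} rather than by exhibiting an enveloping action by hand (the latter is carried out later in \autoref{thm:InterPropGlobz}). Ferraro's characterization reduces the existence of an enveloping action to an internal regularity condition on the partial system, and the key observation is that the \emph{system of equivariant approximate identities} furnished by \autoref{thm:ApproxIdentitiesIntProp} supplies exactly the data needed to verify that condition. Thus the first and only substantive step is to produce such a system; since \autoref{thm:ApproxIdentitiesIntProp} already does this for an arbitrary decomposable $\alpha$, no further reduction is needed. (If Ferraro's condition turned out not to be manifestly stable under $C^*$-direct sums, I would instead first invoke part~(4) of \autoref{prop:EquivDecomp} to reduce to a single orbit summand $A_{G\cdot\tau}$ and build the system there.)

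Concretely, I would recall the naive globalization underlying \autoref{df:enveloping}: one embeds $A$ into a suitable algebra of $A$-valued functions on $G$ (or into an inductive-limit/multiplier completion), transports $\alpha$ to the translation action, and notes that the obstruction to globalizability is precisely that this candidate algebra need not close up to a \ca. The role of the equivariant approximate identities $(e_g^\lambda)$ is to control the relevant nets: the compatibility relation $\alpha_g(e_h^\lambda e_{g^{-1}}^\lambda)=e_{gh}^\lambda e_g^\lambda$ from \autoref{df:SystDynApproxId} is exactly the cocycle-type identity ensuring that the translates $\beta_g(a)$ are mutually compatible, so that the candidate globalization closes up to a genuine \ca\ $B$ carrying a global action $\beta$ restricting to $\alpha$ on the ideal $A$. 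Conditions~(1) and~(2) of \autoref{df:enveloping} are built into the construction and require only a routine check, while condition~(3) is the defining density of $B$.

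I expect the main obstacle to be matching the hypotheses precisely: translating the internal compatibility of the approximate identities into the exact form demanded by Ferraro's theorem, and verifying the strict-topology convergence of the relevant nets in the ambient multiplier algebra. This is the crux because globalizability genuinely fails for general partial actions---even of finite groups---and the only thing ruling out the pathology here is the equivariant system of approximate identities, whose existence in turn hinges on the decomposition property. Once the hypotheses are lined up, the conclusion is immediate from \cite{Fer_construction_2018}, and uniqueness of the enveloping action (Theorem~3.8 of \cite{Aba_enveloping_2003}) then licenses speaking of \emph{the} globalization in what follows.
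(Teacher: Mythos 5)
You correctly identify the external tool the paper uses---Theorem~4.5 of \cite{Fer_construction_2018}---but you never actually verify its hypothesis, and the bridge you propose does not work as stated. Ferraro's condition~(c) asks, for each $(g,a,b)\in G\times A\times A$, for an element $u_{g,a,b}\in A_g$ with $cu_{g,a,b}=\alpha_g(\alpha_{g^{-1}}(c)a)b$ for all $c\in A_g$; equivalently, a certain right multiplier of $A_g$ must be implemented by an element of $A_g$. A system of equivariant approximate identities $(e_g^\lambda)$ only produces the natural candidate net $u_\lambda=\alpha_g(e_{g^{-1}}^\lambda a)b$, which satisfies $cu_\lambda\to\alpha_g(\alpha_{g^{-1}}(c)a)b$ for each fixed $c$ but need not converge in norm, so no element $u_{g,a,b}$ is obtained. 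The compatibility identity $\alpha_g(e_h^\lambda e_{g^{-1}}^\lambda)=e_{gh}^\lambda e_g^\lambda$ plays no role in fixing this: the obstruction is a convergence/multiplier issue, not an equivariance issue. (Note also that \autoref{thm:ApproxIdentitiesIntProp} is itself proved from the decomposition property, so even if this route worked it would be a detour.)

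What actually closes the gap in the paper is the orthogonal decomposition itself, not the approximate identities. By condition~(a) of \autoref{df:nIntProp} one reduces (by linearity and continuity, using that the $A_\tau$ are mutually orthogonal ideals) to $a\in A_\tau$. Then there is a clean dichotomy: if $g^{-1}\in\tau$ then $a\in A_{g^{-1}}$ outright, so $u_{g,a,b}=\alpha_g(a)b$ is defined and works; if $g^{-1}\notin\tau$ then condition~(b) gives $A_\tau\cap A_{g^{-1}}=\{0\}$, so $\alpha_{g^{-1}}(c)a=0$ for all $c\in A_g$ and $u_{g,a,b}=0$ works. This two-line verification is the entire content of the proof and is absent from your proposal. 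Your second paragraph sketches instead a direct construction of the enveloping action via function algebras; that is essentially what the paper does later in \autoref{thm:InterPropGlobz} with induced algebras, but as written it is too vague to count as a proof---``the candidate globalization closes up to a genuine \ca'' is precisely the point at issue, and asserting that the cocycle identity ensures it is not an argument.
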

\begin{proof}
By the equivalence between (a) and (c) in~Theorem~4.5 of~\cite{Fer_construction_2018}, it suffices to show that 
given $(g,a,b)\in G\times A\times A$, there exists
$u_{g,a,b}\in A_g$ such that
$cu_{g,a,b}=\alpha_g(\alpha_{g^{-1}}(c)a)b$ for all $c\in A_g$.
Let $n\in\N$ such that $\alpha$ has the $n$-decomposition property. 
By condition~(a) in \autoref{df:nIntProp}, we may take
$a\in A_\tau$ for $\tau\in\mathcal{T}_n(G)$. 

For $g\notin\tau^{-1}$, 
we set $u_{g,a,b}=0$. In this case,
by condition~(b) in \autoref{df:nIntProp} we have $A_\tau\cap A_{g^{-1}}=\{0\}$, and hence the identity $cu_{g,a,b}=\alpha_g(\alpha_{g}^{-1}(c)a)b$ holds for all $c\in A_g$, since $\alpha_{g}^{-1}(c)a=0$.
For $g\in\tau^{-1}$, set $u_{g,a,b}=\alpha_g(a)b$, which
is well-defined as $a\in A_{g^{-1}}$. Then
\[cu_{g,a,b}=c\alpha_g(a)b=\alpha_g(\alpha_{g^{-1}}(c)a)b\]
for all $c\in A_{g}$, as desired. This finishes the proof.
\end{proof}

Our next goal is to obtain an explicit description of the globalization of a decomposable
partial action, which
was shown to exist in the proposition above.
We need to recall the notion of induced (global) actions; see Chapter~3 
in~\cite{Wil_crossed_2007}. 

\begin{df} \label{df:Ind}
Given a discrete group $G$, a subgroup $H$, a \ca\ $C$ and an action
$\gamma\colon H\to \Aut(C)$, the \emph{induced dynamical system}
$(\mathrm{Ind}_H^G(C),\mathrm{Ind}_H^G(\gamma))$ is
\[\mathrm{Ind}_H^G(C)=
\left\{\xi\in C_b(G,C)\colon 
\begin{tabular}{@{}l@{}}
$\xi(hg)=\gamma_h(\xi(g))$ for all $g\in G$ and $h\in H$,\\ 
and the map $Hg\mapsto \|\xi(g)\|$ is in $C_0(G/H)$
   \end{tabular}
\right\},\]
with $\mathrm{Ind}_H^G(\gamma)_g(\xi)(k)=\xi(kg)$ for all $g,k\in G$ and all $\xi\in
\mathrm{Ind}_H^G(C)$.
 \end{df}

\begin{rem}\label{rem:green}
For discrete groups, Green's Imprimitivity Theorem
(see, for example, Corollary~4.17
in~\cite{Wil_crossed_2007})
can be deduced from the theory of enveloping actions of partial actions from \cite{Aba_enveloping_2003,AbaMar_amenability_2009}. Indeed, 
in the context of \autoref{df:Ind}, the action $\gamma$
defines a partial
action $\widetilde{\gamma}$ of $G$ on $C$, where 
$\widetilde{\gamma}_g=\gamma_g$ if $g\in H$, and
$\widetilde{\gamma}_g=0$ otherwise. 
Then $C\rtimes_\gamma 
H=C\rtimes_{\widetilde{\gamma}}G$, and similarly for the reduced
crossed products.
Moreover, one readily checks
that $(\mathrm{Ind}_H^G(C),\mathrm{Ind}_H^G(\gamma))$ is an enveloping
action for $\widetilde{\gamma}$, with inclusion map 
$\iota\colon
C\to\mathrm{Ind}_H^G(C)$ given by 
$\iota(c)(g)= \gamma_g(c)$ if $g\in H$, and $0$ if $g\notin H$, for 
all $c\in C$.
It follows from \cite{Aba_enveloping_2003} and
\cite{AbaMar_amenability_2009} that the crossed products
$C\rtimes_\gamma 
H=C\rtimes_{\widetilde{\gamma}}G$ and
$\mathrm{Ind}_H^G(C)\rtimes_{\mathrm{Ind}_H^G(\gamma)}G$ are Morita
equivalent, as well as the reduced crossed products     
$C\rtimes_{\gamma,r}H=C\rtimes_{\widetilde{\gamma},r}G$ and
$\mathrm{Ind}_H^G(C)\rtimes_{\mathrm{Ind}_H^G(\gamma),r}G$.       
\end{rem}

\begin{rem}\label{rem:Amenability}
We recall \cite{AbaMar_amenability_2009} 
that a partial action is said to be \emph{amenable}
if the canonical surjection $A\rtimes_\alpha G\to
A\rtimes_{\alpha,r} G$ is injective. 
Adopt the notation and assumptions of \autoref{rem:green}. 
By Corollary~1.3 of \cite{AbaMar_amenability_2009} (or
Corollary 5.4 of \cite{AbaFerr_applications_2017}),
the actions $\gamma$, $\widetilde{\gamma}$ and
$\mathrm{Ind}_H^G(\gamma)$ are simultaneously 
amenable or non-amenable. If $H$ is amenable, then they 
are all amenable.
\end{rem}

We are now ready for the main result of this section.

\begin{thm}\label{thm:InterPropGlobz}
Let $G$ be a discrete group, let $A$ be a \ca, let $n\in\N$, and let 
$\alpha=((A_g)_{g\in G}, (\alpha_g)_{g\in G})$ be a partial action of $G$ on $A$ with the
$n$-decomposition property. 
For every $\tau\in \mathcal{T}_n(G)$, the restriction of $\alpha$ to $A_{G\cdot \tau}$ is
globalizable and amenable, and its enveloping action is
$(\mathrm{Ind}_{H_\tau}^G(A_{\tau}),\mathrm{Ind}_{H_\tau}^G(\alpha),\iota_\tau)$,
where the inclusion
$\iota_\tau\colon A_{G\cdot \tau} \to \mathrm{Ind}_{H_\tau}^G(A_{\tau})$
is given by 
\[\iota_\tau(a)(g)=\begin{cases*}
      \alpha_{g}(\pi^\tau_k(a)) & if $g\in H_\tau x_k$ for some $k=0,\ldots,m_\tau$ \\
      0 & else.
    \end{cases*}
\]
for all $a\in A_{G\cdot \tau}$ and all $g\in G$. (In particular, $\iota_\tau(a)$
is supported on $\tau$.)
It follows that $\alpha$ is globalizable and amenable, and its globalization is
\[\bigoplus\limits_{z\in \mathcal{O}_n(G)}(\mathrm{Ind}_{H_z}^G(A_{\tau_z}),\mathrm{Ind}_{H_z}^G(\alpha),\iota_{\tau_z}).\]
\end{thm}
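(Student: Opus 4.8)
The plan is to reduce everything to a single orbit and then recognize the proposed triple as an enveloping action, verifying the three conditions of \autoref{df:enveloping} directly. By part~(4) of \autoref{prop:EquivDecomp}, $\alpha$ decomposes $G$-equivariantly as $\bigoplus_{z\in\mathcal{O}_n(G)}\alpha|_{A_{G\cdot\tau_z}}$, and since both globalizability and amenability pass to (possibly infinite) $C^*$-direct sums, the enveloping action of a direct sum being the coordinatewise direct sum of the enveloping actions, it suffices to treat a single tuple $\tau$. Writing $H=H_\tau$, $\gamma=\alpha|_H$ for the global action on $A_\tau$ granted by part~(1) of \autoref{prop:EquivDecomp}, and $B=\mathrm{Ind}_H^G(A_\tau)$, $\beta=\mathrm{Ind}_H^G(\gamma)$, $\iota=\iota_\tau$, I would check that $(B,\beta,\iota)$ is an enveloping action; by the uniqueness noted after \autoref{df:enveloping} it is then \emph{the} enveloping action.

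The organizing principle is to describe $\iota$ through coset supports on $G/H$. First I would check that $\iota$ lands in $B$: for $a\in A_{G\cdot\tau}$ and $g\in Hx_k$ the value $\alpha_g(\pi_k^\tau(a))$ lies in $A_\tau$ (since $\pi_k^\tau(a)\in A_{x_k^{-1}\tau}\subseteq A_{x_k^{-1}}$, $\alpha_{x_k}(A_{x_k^{-1}\tau})=A_\tau$, and $A_\tau$ is $H$-invariant), the equivariance $\iota(a)(hg)=\gamma_h(\iota(a)(g))$ follows from the composition law, and the norm function is supported on the finitely many cosets $Hx_0,\dots,Hx_{m}$, hence lies in $C_0(G/H)$; there is no ambiguity because the $Hx_k$ are disjoint and $\pi_k^\tau$ is independent of the representative by \autoref{rem:QuotientMapsAGtau}. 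That $\iota$ is a $*$-homomorphism is immediate from the pointwise operations on $B$ and the fact that the $\pi_k^\tau$ and $\alpha_g$ are $*$-homomorphisms; injectivity follows by evaluating at $g=x_k$ and using $\sum_k\pi_k^\tau=\id$. The key computation is that on the summand $A_{x_j^{-1}\tau}$ one has $\iota(a)=\beta_{x_j^{-1}}\bigl(\iota_0(\alpha_{x_j}(a))\bigr)$, where $\iota_0=\iota|_{A_\tau}$ is exactly the inclusion of \autoref{rem:green}. Since $\iota_0(A_\tau)$ is precisely the set of $\xi\in B$ supported on the coset $H$, this identifies $\iota(A_{G\cdot\tau})$ with the set $J$ of $\xi\in B$ vanishing off $\tau=\bigsqcup_k Hx_k$; as pointwise multiplication makes $J$ a closed two-sided ideal, $\iota$ embeds $A_{G\cdot\tau}$ as an ideal.

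With this description the three conditions become bookkeeping on $G/H$. Condition~(3) is free: since $\iota_0(A_\tau)\subseteq\iota(A_{G\cdot\tau})$ and \autoref{rem:green} already gives $B=\overline{\mathrm{span}}\{\beta_g(\iota_0(A_\tau))\colon g\in G\}$, the larger spanning set also generates $B$. For condition~(1) I would use that $\beta_g$ carries $\{\xi\colon\supp\xi\subseteq\tau\}$ onto $\{\xi\colon\supp\xi\subseteq\tau g^{-1}\}$, so $\iota(A_{G\cdot\tau})\cap\beta_g(\iota(A_{G\cdot\tau}))$ is the set of functions supported on $\tau\cap\tau g^{-1}$; the identity $\tau\cap\tau g^{-1}=\bigsqcup_{j\colon g\in x_j^{-1}\tau}Hx_j$ then matches this with $\iota((A_{G\cdot\tau})_g)$, computed via part~(2) of \autoref{prop:EquivDecomp}. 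For condition~(2) I would verify $\iota(\alpha_g(a))=\beta_g(\iota(a))$ pointwise for $a\in A_{x_j^{-1}\tau}$ with $g^{-1}\in x_j^{-1}\tau$: both sides are supported on the single coset $Hx_k$ determined by $x_kgx_j^{-1}\in H$ (equivalently $gx_j^{-1}\tau=x_k^{-1}\tau$), and at a point $w\in Hx_k$ both evaluate to $\alpha_{wg}(a)$, the equality $\alpha_w(\alpha_g(a))=\alpha_{wg}(a)$ being an instance of the composition law on the correct domains.

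Finally, amenability of $\alpha|_{A_{G\cdot\tau}}$ follows from \autoref{rem:Amenability}: a partial action and its enveloping action are simultaneously amenable, and $\mathrm{Ind}_H^G(\gamma)$ is amenable because $H=H_\tau$ is finite. Reassembling over $\mathcal{O}_n(G)$ via part~(4) of \autoref{prop:EquivDecomp} then yields the stated globalization and amenability of $\alpha$. I expect the main obstacle to be condition~(2): one must pin down the unique target coset and then invoke the partial-action composition law only where all relevant domains genuinely contain the elements involved, which is exactly where the hypothesis $1\in\tau$ and the disjoint $H$-coset decomposition of $\tau$ from \autoref{lma:OrbitTuples} are essential.
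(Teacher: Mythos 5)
Your proof is correct, and while it shares the paper's skeleton (reduce to a single orbit via part~(4) of \autoref{prop:EquivDecomp}, verify the three conditions of \autoref{df:enveloping}, then get amenability from \autoref{rem:Amenability} since $H_\tau$ is finite), the technical execution is genuinely different. The paper proves each claim by direct element-wise computation: it checks that $\iota(a)\xi=\iota(a\alpha_{x_k}^{-1}(\xi(x_k)))$ to get the ideal property, proves both inclusions of $\iota(A_{G\cdot\tau}\cap A_g)=\beta_g(\iota(A_{G\cdot\tau}))\cap\iota(A_{G\cdot\tau})$ by hand, and establishes density by summing $\beta_{g_\lambda}^{-1}(\iota(\xi(g_\lambda)))$ over coset representatives. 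Your organizing device is instead the identification $\iota(A_{G\cdot\tau})=\{\xi\in\mathrm{Ind}_H^G(A_\tau)\colon \supp\xi\subseteq\tau\}$, obtained from the formula $\iota|_{A_{x_j^{-1}\tau}}=\beta_{x_j^{-1}}\circ\iota_0\circ\alpha_{x_j}$ with $\iota_0$ the inclusion of \autoref{rem:green}; this converts the ideal property and condition~(1) into bookkeeping with the coset decomposition $\tau\cap\tau g^{-1}=\bigsqcup_{j\colon g\in x_j^{-1}\tau}Hx_j$ matched against part~(2) of \autoref{prop:EquivDecomp}, and it makes condition~(3) free by citing the density statement already contained in \autoref{rem:green}. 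What your route buys is brevity and conceptual clarity (the enveloping algebra is literally the $G$-saturation of a support ideal); what the paper's route buys is self-containedness, since \autoref{rem:green} is only asserted there as ``readily checked,'' so the paper's explicit density computation effectively supplies the proof of that remark rather than relying on it. Your verification of condition~(2), pinning down the unique target coset $Hx_k$ with $x_kgx_j^{-1}\in H_\tau$ and evaluating both sides to $\alpha_{wg}(a)$, is essentially identical to the paper's fourth claim, and your domain checks there are the ones that matter.
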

\begin{proof}
As explained in \autoref{rem:EquivDecomp}, it suffices to prove the first 
part of the statement.
Let $\tau\in \mathcal{T}_n(G)$. To lighten the notation, we abbreviate $H_\tau$
to $H$, $m_\tau$ to $m$, $x_j^\tau$ to $x_j$, and $\iota_\tau$ to $\iota$. 
It is clear that $\iota$ is an embedding. We check the remaining conditions in
\autoref{df:enveloping} in a series of claims.

\textbf{Claim:} \emph{the range of $\iota$ is contained in 
$\mathrm{Ind}_H^G(A_\tau)$.} 
It is clear that $\iota_\tau(a)(g)$ belongs to $A_\tau$ for all $a\in A_{G\cdot\tau}$ and all $g\in G$.
Let $g\in G$, let $h\in H$, and let $a\in A_{G\cdot\tau}$. Then $\iota(a)(hg)=0=\iota(a)(g)$ unless $g\in Hx_k$ for some $k=0,\ldots,m$, in which case
\[\iota(a)(hg)= \alpha_{hg}(\pi^\tau_k(a))=\alpha_h(\iota(a)(g)),\]
as desired. Moreover, the induced map $Hg\mapsto \|\iota(a)(g)\|$ belongs
to $C_0(G/H)$ because it is supported on the finite set $\{Hx_0,\ldots,Hx_m\}$. 
This proves the claim.

\textbf{Claim:} \emph{the image of $\iota$ is an ideal in $\mathrm{Ind}_H^G(A_\tau)$.}
Let $\xi\in \mathrm{Ind}_H^G(A_\tau)$, let $k=0,\ldots, m$, and 
let $a\in A_{x_k^{-1}\tau}$. 
It suffices to check that $\iota(a)\xi$ belongs to the image of $\iota$. 
For $g\in G$, we have 
$(\iota(a)\xi)(g)=0$ unless $g\in Hx_k$, in which case we have
\begin{align*}
(\iota(a)\xi)(g)=\alpha_g(a\alpha_{g^{-1}}(\xi(g)))
 =\alpha_g(a\alpha_{x_k}^{-1}(\xi(x_k)))
=\iota(a\alpha_{x_k}^{-1}(\xi(x_k)))(g),
\end{align*}
where at the last step we use that $a\alpha_{x_k}^{-1}(\xi(x_k))$
belongs to  $A_{x_k^{-1}\tau}$. We conclude that, $\iota(a)\xi=\iota(a\alpha_{x_k}^{-1}(\xi(x_k)))$, thus proving the claim. 
From now on, we set $\beta=\mathrm{Ind}_H^G(\alpha)$.

\textbf{Claim:} 
\emph{for $g\in G$, we have
\[\iota(A_{G\cdot \tau}\cap A_g)=\beta_g(\iota(A_{G\cdot\tau}))\cap \iota(A_{G\cdot \tau}).\ \]}
We begin with the inclusion $\subseteq$. For this, it is enough to show that 
$\iota(a)$ belongs to $\beta_g(\iota(A_{G\cdot\tau}))$
for $k=0,\ldots,m$ and $a\in A_{x_k^{-1}\tau}\cap A_g$. This is trivial if 
$g\notin x_k^{-1}\tau$, since the decomposition property implies that 
$A_{x_k^{-1}\tau}\cap A_g=\{0\}$. Assume that $g\in x_k^{-1}\tau$ and let $j=0,\ldots, m$ satisfy $g\in x_k^{-1}Hx_j$.
For $t\in G$ we have 
\begin{align*}
\beta_g(\iota(\alpha_g^{-1}(a)))(t)&=\iota(\alpha_g^{-1}(a))(tg)
=\mathbbm{1}_{\{tg\in Hx_j\}}\alpha_{tg}(\alpha_g^{-1}(a))
=\mathbbm{1}_{\{t\in Hx_k\}}\alpha_t(a)=\iota(a)(t).
\end{align*}
It follows that $\iota(a)=\beta_g(\iota(\alpha_g^{-1}(a)))$.
To prove the converse inclusion, it suffices 
to show
\[\iota(A_{G\cdot \tau}\cap A_g)\supseteq \beta_g(\iota(A_{x_j^{-1}\tau}))\cap\iota(A_{x_k^{-1}\tau})\]
for all $g\in G$ and all $j,k=0,\ldots,m$. 
Let $a\in A_{x_j^{-1}\tau}$ and $b\in A_{x_k^{-1}\tau}$ satisfy 
$\beta_g(\iota(a))=\iota(b)$. We will be done once we show that $b\in A_g$. For $t\in G$ we have
\[\beta_g(\iota(a))(t)=\iota(a)(tg)=\begin{cases*}
      \alpha_{tg}(a) & if $tg\in Hx_j$\\
      0 & else.
    \end{cases*}\]
On the other hand
$\iota(b)(t)=\alpha_{t}(b)$ if $t\in Hx_k$, and 0 otherwise.
If $\beta_g(\iota(a))=\iota(b)=0$, then by injectivity of $\iota$ we get $b=0\in A_g$, as required. Otherwise, there exists some $t\in Hx_k$ such that $tg\in Hx_j$. Then, $g\in x_k^{-1}Hx_j\subseteq x_k^{-1}\tau$, and by definition 
$A_{x_k^{-1}\tau}\subseteq A_g$, so $b\in A_g$, as required. The claim is proved.

\textbf{Claim:} \emph{$\iota(\alpha_g(a))=\beta_g(\iota(a))$ for all $a\in (A_{G\cdot\tau})_{g^{-1}}$ and all $g\in G$.} As $a=\sum_{j=0}^{m}\pi_j(a)$, we may assume that $a\in A_{x_j^{-1}\tau}\cap A_{g^{-1}}$ for some $j=0,\ldots, m$, and that $g^{-1}\in x_j^{-1}\tau$ (otherwise, this intersection is trivial). Let $k\in\{0,\ldots,m\}$ be the unique element such that $g^{-1}\in x_j^{-1}Hx_k$. Note that $a=\pi_j(a)$.
For $t\in G$, we have 
\[\beta_{g}(\iota(a))(t)=0=\iota(\alpha_{g}(a))(t)\]
unless $t$ belongs to $Hx_k$, in which case we have
\begin{align*}
\beta_{g}(\iota(a))(t)=\iota(a)(tg)
=\alpha_{tg}(a)
=\alpha_t(\alpha_{g}(a))
=\iota(\alpha_{g}(a))(t).
\end{align*}
This proves the claim.

\textbf{Claim:} \emph{$\overline{\sum_{g\in G}\beta_g(\iota(A_{G\cdot\tau}))}=\mathrm{Ind}_H^G(A_\tau)$.} Let $(g_\lambda)_{\lambda\in\Lambda}$ be a subset 
of $G$ satisfying 
$\bigsqcup_{\lambda\in\Lambda} Hg_\lambda=G$. 
An element $\xi\in \mathrm{Ind}_H^G(A_\tau)\subseteq C_b(G,A_\tau)$ 
is completely determined by its values on $g_\lambda$, for $\lambda\in \Lambda$. 
Moreover, the set of those $\xi\in \mathrm{Ind}_H^G(A_\tau)$ that take nonzero
values on a finite subset of $\{g_\lambda\in\Lambda\}$ is dense in 
$\mathrm{Ind}_H^G(A_\tau)$. 

Let $\xi\in \mathrm{Ind}_H^G(A_\tau)$ satisfy $\xi(g_\lambda)=0$ for all but finitely
many $\lambda$. 
By the comments above, it suffices to show that
$\sum\limits_{\lambda\in\Lambda}\beta_{g_\lambda}^{-1}(\iota(\xi(g_\lambda)))=\xi$.
To see this, let $\mu\in \Lambda$
and $h\in H$. Then
\begin{align*}
\Big(\sum\limits_{\lambda\in\Lambda}\beta_{g^{-1}_\lambda}(\iota(\xi(g_\lambda)))\Big)(hg_\mu)=\sum\limits_{\lambda\in\Lambda}\iota(\xi(g_\lambda))(hg_\mu g_\lambda^{-1})=\xi(hg_\mu),
\end{align*}
where at the second step we use the fact that $hg_\mu g_\lambda^{-1}\in H$ if and only if $\mu=\lambda$. This proves the claim, and finishes the identification of the enveloping action. 
Amenability of the 
restriction of $\alpha$ to $A_{G\cdot\tau}$ (and hence for 
$\alpha$) follows from \autoref{rem:Amenability}, since 
the global action $\alpha\colon H\to\Aut(A_\tau)$ is amenable
because $H$ is finite.
\end{proof}

In the next result, 
we write $\sim_M$ for Morita equivalence.

\begin{cor}\label{cor:CPuptoMorEquiv}
Fix $\tau\in \mathcal{T}_n(G)$. With the assumptions of
\autoref{thm:InterPropGlobz} we have 
\[A_{G\cdot \tau}\rtimes_\alpha G=A_{G\cdot \tau}\rtimes_{\alpha,r} G\sim_M \mathrm{Ind}_{H_\tau}^G(A_\tau)\rtimes_{\mathrm{Ind}_{H_\tau}^G(\alpha)} G\sim_M
A_{\tau}\rtimes_{\alpha|_{H_\tau}} H_\tau.\] 
Moreover, 
$A\rtimes_\alpha G= A\rtimes_{r,\alpha} G\sim_M \bigoplus_{z\in\mathcal{O}_n(G)}
A_{\tau_z}\rtimes_{\alpha|_{H_z}} H_z$. In particular, the partial crossed
products of $\alpha$ can be computed, up to Morita equivalence,
using the crossed products of the global systems
$H_\tau\curvearrowright A_{\tau}$.
\end{cor}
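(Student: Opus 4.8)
The plan is to read each link of the displayed chain directly off results already established, so that the proof reduces to bookkeeping together with the transitivity of Morita equivalence. First I would invoke \autoref{thm:InterPropGlobz}: it tells us that the restriction of $\alpha$ to $A_{G\cdot\tau}$ is amenable---which is precisely the asserted equality $A_{G\cdot\tau}\rtimes_\alpha G=A_{G\cdot\tau}\rtimes_{\alpha,r}G$ (see \autoref{rem:Amenability})---and that it is globalizable with enveloping action $(\mathrm{Ind}_{H_\tau}^G(A_\tau),\mathrm{Ind}_{H_\tau}^G(\alpha),\iota_\tau)$.

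The first Morita equivalence then follows from the general principle of \cite{Aba_enveloping_2003}, already used in \autoref{rem:green}: the full crossed product of a globalizable partial action is Morita equivalent to the full crossed product of its enveloping action. Applied to $\alpha|_{A_{G\cdot\tau}}$ with the enveloping action identified in \autoref{thm:InterPropGlobz}, this yields $A_{G\cdot\tau}\rtimes_\alpha G\sim_M\mathrm{Ind}_{H_\tau}^G(A_\tau)\rtimes_{\mathrm{Ind}_{H_\tau}^G(\alpha)}G$, and the analogous statement for reduced crossed products is obtained identically via \cite{AbaMar_amenability_2009}. The second Morita equivalence is nothing but Green's imprimitivity theorem as recorded in \autoref{rem:green}, which gives $\mathrm{Ind}_{H_\tau}^G(A_\tau)\rtimes_{\mathrm{Ind}_{H_\tau}^G(\alpha)}G\sim_M A_\tau\rtimes_{\alpha|_{H_\tau}}H_\tau$ (and similarly for the reduced versions). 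Chaining the three links gives the first displayed formula.

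For the \emph{moreover} part I would use the $G$-equivariant isomorphism $A\cong\bigoplus_{z\in\mathcal{O}_n(G)}A_{G\cdot\tau_z}$ of \autoref{prop:EquivDecomp}(4), in which each $A_{G\cdot\tau_z}$ is a $G$-invariant ideal. Since the (full, resp.\ reduced) partial crossed product functor commutes with $C^*$-algebraic direct sums of $G$-invariant ideals, one gets $A\rtimes_\alpha G\cong\bigoplus_z\big(A_{G\cdot\tau_z}\rtimes_\alpha G\big)$, and applying the first part to each summand produces the Morita equivalence with $\bigoplus_z A_{\tau_z}\rtimes_{\alpha|_{H_z}}H_z$. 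The coincidence of full and reduced crossed products for $\alpha$ on all of $A$ is inherited from the summands, each being amenable by \autoref{thm:InterPropGlobz}.

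Since the genuinely analytic work---identifying the enveloping action---was carried out in \autoref{thm:InterPropGlobz}, and the imprimitivity machinery is imported wholesale from \cite{Aba_enveloping_2003,AbaMar_amenability_2009}, there is no substantial obstacle here. The only points that demand a moment of care are verifying that the crossed-product functor respects the direct-sum decomposition of \autoref{prop:EquivDecomp}(4) at the level of $G$-invariant ideals, and that the full-versus-reduced coincidence is inherited by the direct sum (both of which are routine, the latter following since the canonical surjection $\bigoplus_z B_z\twoheadrightarrow\bigoplus_z C_z$ is injective iff each $B_z\to C_z$ is).
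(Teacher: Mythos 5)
Your proposal is correct and follows exactly the route the paper intends: the paper gives no explicit proof of this corollary, treating it as an immediate consequence of \autoref{thm:InterPropGlobz} combined with the Morita equivalence between a partial action's crossed product and that of its enveloping action from \cite{Aba_enveloping_2003,AbaMar_amenability_2009}, Green's imprimitivity as recorded in \autoref{rem:green}, amenability via \autoref{rem:Amenability}, and the direct-sum decomposition of \autoref{prop:EquivDecomp}. Your bookkeeping of each link, including the compatibility of crossed products with the direct sum of $G$-invariant ideals, matches that intended argument.
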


\section{Fixed point algebras}
In this section, we study algebras of $G$-invariant elements. For global actions, the theory is well-known to work best for 
finite (or even compact) groups: there are a faithful conditional expectation $E\colon A\to A^G$, 
and an injective homomorphism $c\colon A^G\to A\rtimes G$ whose
image is a corner in $A\rtimes G$. With $u_g\in M(A\rtimes G)$, for $g\in G$, denoting the canonical unitaries, these maps are given by
\[E(a)=\frac{1}{|G|}\sum_{g\in G}\alpha_g(a) \ \ \mbox{ and } \ \ c(b)=\frac{1}{|G|}\sum_{g\in G}u_gb\]
for all $a\in A$ and all $b\in A^G$.
Both of these maps fail to exist for infinite groups: 
in this case, $A^G$ is typically too small (and is often trivial). 
For partial actions, even of finite groups, additional 
complications arise; see \autoref{eg:NoCondExp} and \autoref{eg:CornerMapNotExist}.

In this section, we show that decomposable partial actions, even for infinite groups, behave similarly to \emph{global} actions of 
\emph{finite} groups from the point of view of fixed point algebras. 
For example, there
are analogs of the maps $E\colon A\to A^G$ and $c\colon A^G\to A\rtimes G$
mentioned above; see \autoref{prop:CondExp} and \autoref{prop:CornerMap}. We also give
an explicit description of $A^G$ in terms of the fixed point algebras of the 
global systems $H_\tau\curvearrowright A_\tau$;
see \autoref{prop:FixedPtAlg}.

\begin{df}\label{df:GInvariantElement}
Let $G$ be a discrete group, and let 
$\alpha=((A_g)_{g\in G}, (\alpha_g)_{g\in G})$ be a partial action of $G$ on a \ca\ $A$. We say that an element
$a\in A$ is \emph{$G$-invariant}, if for every $g\in G$ and every $b_{g}\in A_{g}$ we have
$\alpha_{g^{-1}}(ab_g)=a\alpha_{g^{-1}}(b_g)$. Finally, the subalgebra $A^G$
of $A$ consisting of all $G$-invariant elements is called the 
\emph{fixed point algebra} of $\alpha$.
\end{df}

In the context of the above definition, we could have instead
defined an element $a\in A$ to be $G$-invariant if 
$\alpha_{g^{-1}}(b_ga)=\alpha_{g^{-1}}(b_g)a$ for all $g\in G$
and all $b_g\in A_g$. The existence of approximate identities in 
\ca s implies that the two notions would be equivalent, as 
we show in the next remark.

\begin{rem}\label{lma:FixedCstar}
Let $G$ be a discrete group, and let 
$\alpha=((A_g)_{g\in G}, (\alpha_g)_{g\in G})$ be a partial action of $G$ on a \ca\ $A$. Let $a\in A^G$, let $g\in G$ and let 
$b_g\in A_g$; we claim that $\alpha_{g^{-1}}(b_ga)=\alpha_{g^{-1}}(b_g)a$. To see this, let $(e_j)_{j\in I}$ be an approximate identity of $A_{g}$. Then
\begin{align*}
\alpha_{g^{-1}}(b_ga)&=\lim_{j\in I} \alpha_{g^{-1}}(b_ga)\alpha_{g^{-1}}(e_j)
=\lim_{j\in I} \alpha_{g^{-1}}(b_gae_j)\\
&=\lim_{j\in I} \alpha_{g^{-1}}(b_g)\alpha_{g^{-1}}(ae_j)
=\lim_{j\in I} \alpha_{g^{-1}}(b_g)a\alpha_{g^{-1}}(e_j)
=\alpha_{g^{-1}}(b_g)a,
\end{align*}
as required. In particular, it follows that $A^G$ is closed
under the adjoint operation, and is thus a $C^*$-subalgebra
of $A$.
\end{rem}

Perhaps surprisingly, conditional expectations as described at
the beginning of this section
do not always exist in the case of partial actions of finite groups: 

\begin{eg}\label{eg:NoCondExp}
Set $X=(0,2]$ and $U=(0,1)\cup (1,2)\subseteq X$, and let $\sigma\colon U\to U$ be the order-2 homeomorphism given by $\sigma(x)=\begin{cases*}
      x+1 & if $x<1$ \\
      x-1 & if $x>1$.
    \end{cases*}$ for $x\in U$. Then there is no conditional
expectation $E\colon C_0(X)\to C_0(X)^{\Z_2}$.
\end{eg}
\begin{proof}
We claim that
\[C_0(X)^{\Z_2}=\{f\in C_0((0,2])\colon f(x)=f(x+1) \mbox{ for all } 0<x<1\}.\]
Since the inclusion $\supseteq$ is clear, 
we prove the reverse containment. Let $f\in C_0(X)^{\Z_2}$, and let 
$g\in C_0(U)$ be strictly positive. For $x\in (0,1)\subseteq U$ we have 
\[f(x+1)g(x+1)=\alpha_1(fg)(x)=\left(f\alpha_1(g)\right)(x)=f(x)g(x+1).\]
It follows that $f(x)=f(x+1)$ for all $x\in (0,1)$, as desired. 

In particular, any $f\in C_0(X)^{\Z_2}$ satisfies $f(1)=0$.
Suppose that there exists a conditional expectation 
$C_0(X)\to C_0(X)^{\Z_2}$. Denote by $\iota\in C_0(X)$ the canonical inclusion
$X\hookrightarrow \C$. We claim that $E(\iota)(1)\neq 0$, which will be
a contradiction.

For every $n\in\N$, denote by $f_n\in C_0(X)^{\Z_2}$ any positive function with 
$f_n\leq \iota$ that
satisfies $f_n(x)=x$ for $x\in (0,\frac{n-1}{n}]$. 
Since conditional expectations are order-preserving, we must have 
$f_n=E(f_n)\leq E(\iota)$ for all $n\in\N$. 
In particular, 
\[E(\iota)(1)=\lim_{n\to \I} E(\iota)\left(\frac{n-1}{n}\right)\geq 
 \lim_{n\to \I} f_n\left(\frac{n-1}{n}\right)=\lim_{n\to\I}\frac{n-1}{n}=1,
\]
as desired. It follows that no such conditional expectation exists. \end{proof}


In contrast with the previous example, we show in 
\autoref{prop:CondExp} that decomposable actions,
even of infinite 
groups, always admit canonical conditional expectations onto their fixed point
algebras. We retain
\autoref{nota:Section}.  

\begin{thm}\label{prop:CondExp}
Let $G$ be a discrete group, let $A$ be a \ca, let $n\in\N$, and let $\alpha$ be a partial 
action of $G$ on $A$ with the $n$-decomposition property.
Fix $\tau\in\mathcal{T}_n(G)$.  
\be
\item For $h\in H$, 
$j,k=0,\ldots,m$, and $a\in A_{G\cdot\tau}^G$, we have
$\pi_k(a)=\alpha_{{x_k^{-1}}h{x_j}}(\pi_j(a))$.
\item For $a\in A_{\tau}$, the following element belongs to $A_{G\cdot\tau}^G$:
\[\varphi_\tau(a)=\frac{1}{|H|}\sum\limits_{j=0}^m  \sum\limits_{h\in H} \alpha_{hx_j}^{-1}(a).\]
\item The map $E_\tau\colon A_{G\cdot \tau}\to A_{G\cdot\tau}^G$ given by
\[E_\tau(a)=\frac{1}{m+1}\sum\limits_{j=0}^m \varphi_\tau\left(\alpha_{x_j}(\pi_j(a))\right),\]
for all $a\in A_{G\cdot \tau}$, is a faithful conditional expectation.
\item $E_\tau$ is independent of the elements $x_1,\ldots,x_m$.
\item For $g\in \tau^{-1}$, we have $E_{g\tau}=E_\tau$.
\ee

It follows that the map $E=\bigoplus\limits_{z\in \mathcal{O}_n(G)}E_{\tau_z}\colon A\to A^G$ is a canonical faithful conditional expectation, that is, $E$
is independent of the section $z\mapsto \tau_z$.
\end{thm}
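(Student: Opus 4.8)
The plan is to prove the five numbered assertions in order and then assemble them. Throughout I abbreviate $H=H_\tau$, $m=m_\tau$, $x_j=x_j^\tau$, $\pi_j=\pi_j^\tau$, and I freely use that $A_{G\cdot\tau}=\bigoplus_{j=0}^m A_{x_j^{-1}\tau}$ with the canonical quotient maps $\pi_j$ of \autoref{rem:QuotientMapsAGtau}, that these ideals are pairwise orthogonal (\autoref{rem:DisjTuples}), the description of the domains $(A_{G\cdot\tau})_g$ from part~(2) of \autoref{prop:EquivDecomp}, and that $\alpha|_H$ is a global action on $A_\tau$ (part~(1) of the same proposition). For (1), fix $a\in A_{G\cdot\tau}^G$ and set $g=x_k^{-1}hx_j$. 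A short computation with $H$-invariance of $\tau$ shows $g^{-1}\in x_j^{-1}\tau$ and $g\cdot x_j^{-1}\tau=x_k^{-1}\tau$, so that $\alpha_g$ carries $A_{x_j^{-1}\tau}$ isomorphically onto $A_{x_k^{-1}\tau}$. Choosing an approximate identity $(e_\lambda)$ of $A_{x_j^{-1}\tau}\subseteq(A_{G\cdot\tau})_{g^{-1}}$, the defining relation of $G$-invariance (applied with $g^{-1}$ in place of $g$) gives $\alpha_g(ae_\lambda)=a\,\alpha_g(e_\lambda)$; letting $\lambda\to\infty$ and using that $ae_\lambda\to\pi_j(a)$ while $(\alpha_g(e_\lambda))$ is an approximate identity of $A_{x_k^{-1}\tau}$ (so $a\,\alpha_g(e_\lambda)\to\pi_k(a)$) yields $\pi_k(a)=\alpha_g(\pi_j(a))$, as claimed.

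The key to (2) is the converse of (1): \emph{if $b\in A_{G\cdot\tau}$ satisfies $\pi_k(b)=\alpha_{x_k^{-1}hx_j}(\pi_j(b))$ for all $h,j,k$, then $b\in A_{G\cdot\tau}^G$}. To see this, fix $g$ and $b_g$; by \autoref{prop:EquivDecomp}(2) we may assume $b_g\in A_{x_l^{-1}\tau}$ with $g\in x_l^{-1}\tau$, and write $g=x_l^{-1}h'x_{l'}$. Orthogonality of the summands collapses $bb_g=\pi_l(b)b_g$ and $b\,\alpha_{g^{-1}}(b_g)=\pi_{l'}(b)\,\alpha_{g^{-1}}(b_g)$ (using $\alpha_{g^{-1}}(b_g)\in A_{x_{l'}^{-1}\tau}$); since $\alpha_{g^{-1}}$ is multiplicative on $A_g$ and $\alpha_{g^{-1}}(\pi_l(b))=\pi_{l'}(b)$ by hypothesis, the invariance identity $\alpha_{g^{-1}}(bb_g)=b\,\alpha_{g^{-1}}(b_g)$ follows. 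A direct computation (applying the composition law $\alpha_{x_k^{-1}h_0x_j}\alpha_{x_j^{-1}h}=\alpha_{x_k^{-1}h_0h}$ on the relevant domains) shows that $\varphi_\tau(a)$ satisfies this relation, whence $\varphi_\tau(a)\in A_{G\cdot\tau}^G$, proving (2). For (3) I will verify, via Tomiyama's theorem, that $E_\tau$ is a faithful conditional expectation. By (2) it maps into $A_{G\cdot\tau}^G$, and it is positive and contractive because $\pi_j$, $\alpha_{x_j}$ and $\varphi_\tau$ are positive contractions (the $H$-average and the distinct $j$-summands land in orthogonal ideals, so each norm is a maximum of averages of isometric images). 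Applying (1) with $k=0$, $h=1$ gives $\alpha_{x_j}(\pi_j(a))=\pi_0(a)\in A_\tau^H$ for all $j$, so $E_\tau(a)=\varphi_\tau(\pi_0(a))$, and computing $\pi_l(\varphi_\tau(\pi_0(a)))=\alpha_{x_l^{-1}}(\pi_0(a))=\pi_l(a)$ shows $E_\tau|_{A_{G\cdot\tau}^G}=\mathrm{id}$; thus $E_\tau$ is a contractive idempotent onto $A_{G\cdot\tau}^G$. Faithfulness follows since $\pi_j,\alpha_{x_j}$ are $*$-homomorphisms and $\pi_0\varphi_\tau(c^*c)=\frac{1}{|H|}\sum_{h\in H}\alpha_h(c^*c)$ is a sum of positive elements, so $E_\tau(a^*a)=0$ forces each $\pi_j(a)=0$, i.e.\ $a=0$.

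Items (4) and (5) carry the real content. For (4), \autoref{lma:OrbitTuples}(3) shows any other system of representatives is $y_j=h_jx_{\sigma(j)}$ with $h_j\in H$ and $\sigma\in S_m$ (extended by $\sigma(0)=0$); the formula for $\varphi_\tau$ is unchanged under this reindexing, $\pi_j$ depends only on the coset $Hx_j$ (\autoref{rem:QuotientMapsAGtau}(2)), and the identity $\varphi_\tau(\alpha_h(c))=\varphi_\tau(c)$ for $h\in H$ absorbs the $H$-twist, so $E_\tau$ is unchanged. For (5) the clean route, which I expect to be the main obstacle, is to rewrite $E_\tau$ \emph{intrinsically in terms of the orbit}. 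Unwinding the definition and reindexing the double sum over $(h,j)$ first by $t=hx_j\in\tau$ and then by $u=x_l^{-1}t$, one obtains for each $\sigma\in G\cdot\tau$ that
\[\pi_\sigma(E_\tau(a))=\frac{1}{n}\sum_{u\in\sigma}\alpha_u\big(\pi_{u^{-1}\sigma}(a)\big),\]
where $\pi_\sigma\colon A_{G\cdot\tau}\to A_\sigma$ is the canonical projection and $G\cdot\tau=\{x_k^{-1}\tau\colon k=0,\ldots,m\}$ has exactly $m+1$ elements. The right-hand side refers only to the orbit $G\cdot\tau$ and its canonical projections, not to the base point $\tau$ or its representatives; since $g\tau$ lies in the same orbit for $g\in\tau^{-1}$, this gives $E_{g\tau}=E_\tau$ at once, proving (5).

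Finally, for the assembly, \autoref{prop:EquivDecomp}(4) gives $A=\bigoplus_{z\in\mathcal{O}_n(G)}A_{G\cdot\tau_z}$ as orthogonal $G$-invariant ideals, and because $G$-invariance is tested summand by summand this yields $A^G=\bigoplus_z A_{G\cdot\tau_z}^G$. The orthogonal direct sum $E=\bigoplus_z E_{\tau_z}$ of contractive idempotents onto these summands is then a well-defined contractive idempotent onto $A^G$, hence a conditional expectation, and it is faithful because each $E_{\tau_z}$ is. Independence of the section is immediate from (5): changing the section replaces $\tau_z$ by some $g\tau_z$ with $g\in\tau_z^{-1}$ in the same orbit, and $E_{g\tau_z}=E_{\tau_z}$; equivalently, the intrinsic formula of the previous paragraph already exhibits each $E_{\tau_z}$ as a function of the orbit $z$ alone.
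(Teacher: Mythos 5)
Your proof is correct, and for items (1), (3), (4) and the final assembly it follows the paper's argument closely: (1) via approximate identities and the invariance identity, (3) via the contractive-idempotent (Tomiyama) route with faithfulness read off the components $\pi_j$, and (4) by absorbing the $H$-twist on representatives. Two places genuinely differ. In (2), instead of verifying the defining invariance identity for $\varphi_\tau(a)$ against an arbitrary $b_g\in(A_{G\cdot\tau})_g$ as the paper does, you first prove the converse of (1) --- an element $b\in A_{G\cdot\tau}$ is invariant if and only if its components satisfy $\pi_k(b)=\alpha_{x_k^{-1}hx_j}(\pi_j(b))$ for all $h,j,k$ --- and then check this component relation for $\varphi_\tau(a)$; this is the same computation repackaged as a reusable two-way characterization of $A_{G\cdot\tau}^G$, which the paper never states. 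The real divergence is (5): the paper reduces to $g=x_\ell^{-1}$, identifies $H_{g\tau}=x_\ell^{-1}H_\tau x_\ell$ with representatives $y_j=x_\ell^{-1}x_j$, and recomputes $E_{g\tau}$ term by term using (4), whereas you derive the base-point-free formula
\[\pi_\sigma(E_\tau(a))=\frac{1}{n}\sum_{u\in\sigma}\alpha_u\big(\pi_{u^{-1}\sigma}(a)\big),\qquad \sigma\in G\cdot\tau,\]
which is correct: parametrizing $u=x_\ell^{-1}hx_k$ with $h\in H$, $k=0,\ldots,m$ gives $u^{-1}\sigma=x_k^{-1}\tau$ and recovers exactly the expansion $\frac{1}{|H|(m+1)}\sum_{k,h}\alpha_{x_\ell^{-1}hx_k}(\pi_k(a))$ of the $A_{x_\ell^{-1}\tau}$-component of $E_\tau(a)$. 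Since the right-hand side mentions only the orbit, its ideals and their canonical projections --- not the base tuple $\tau$ nor the chosen $x_j$ --- this single formula yields (4), (5), and the section-independence of $E$ in one stroke. The trade-off: the paper's route is a self-contained index computation resting on (4), while yours requires justifying the reindexing once but then exhibits $E_\tau$ conceptually as an average over each tuple in the orbit, making the ``canonical'' nature of $E$ transparent rather than a consequence of two separate verifications.
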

\begin{proof}
(1). Let $(e_{\lambda})_{\lambda\in\Lambda}$ be an approximate identity for $A_{x_k^{-1}\tau}$. Then $\big(\alpha_{x_k^{-1}h{x_j}}^{-1}(e_\lambda)\big)_{\lambda\in\Lambda}$ is an approximate identity for $A_{x_j^{-1}\tau}$, since $\alpha_{x_k^{-1}h{x_j}}^{-1}(A_{x_k^{-1}\tau})=A_{x_j^{-1}\tau}$. Using this in combination with 
part~(1) of \autoref{rem:QuotientMapsAGtau} at 
the first and third step, and using invariance of $a$ at the second step, we
get
\begin{align*}
\alpha_{x_k^{-1}h{x_j}}^{-1}(\pi_k(a))
=\lim_\lambda \alpha_{x_k^{-1}h{x_j}}^{-1}(ae_{\lambda})
=\lim_{\lambda} a\alpha_{x_k^{-1}h{x_j}}^{-1}(e_{\lambda})
=\pi_j(a).
\end{align*}

(2). 
Let $\tau\in\mathcal{T}_n(G)$, let $a\in A_{\tau}$, let $g\in G$, and 
let $b_g\in (A_{G\cdot \tau})_g$. We claim that
\[\alpha_{g^{-1}}(\varphi_\tau(a)b_g)=\varphi_\tau(a)\alpha_{g^{-1}}(b_g).\]
Note first that the identity is trivially satisfied if $g\notin \tau^{-1}\tau$, 
since in this case $b_g$ must be zero. Since $b_g=\sum_{j=0}^{m} \pi_j(b)$, we may assume that $b_g\in A_{x_j^{-1}\tau}\cap A_g$ for some $j=0,\ldots, m$ and that $g\in x_j^{-1}\tau$ (otherwise, this intersection is zero).
Let $k\in \{0,\ldots,m\}$ and $h\in H$ be the 
unique elements satisfying $g=x_j^{-1}hx_k$. Then
\[\alpha_{g^{-1}}(\varphi_\tau(a)b_g)= \frac{1}{|H|}\sum\limits_{\ell=0}^m  \sum\limits_{t\in H} \alpha_{g^{-1}}(\alpha_{tx_\ell}^{-1}(a)b_g).\]
The product $\alpha_{tx_\ell}^{-1}(a)b_g$ belongs to $A_{x_\ell^{-1}\tau}\cap 
A_{x_j^{-1}\tau}$, so it is zero unless $\ell=j$. Similarly, 
\[\varphi_\tau(a)\alpha_{g^{-1}}(b_g)=\frac{1}{|H|}\sum\limits_{\ell=0}^m  \sum\limits_{t\in H} \alpha_{tx_\ell}^{-1}(a)\alpha_{g^{-1}}(b_g)\]
and the product $\alpha_{tx_\ell}^{-1}(a)\alpha_{g^{-1}}(b_g)$ is zero unless
$\ell=k$. Thus, it suffices to show that
\[\sum_{t\in H}\alpha_{x_k^{-1}h^{-1}x_j}(\alpha_{tx_j}^{-1}(a)b_g)=
 \sum_{t\in H}\alpha_{tx_k}^{-1}(a)\alpha_{x_k^{-1}h^{-1}x_j}(b_g).
\]
Fix $t\in H$. Then $\alpha_{tx_j}^{-1}(a)$ belongs to the domain of 
$\alpha_{x_k^{-1}h^{-1}x_j}$, and thus
\begin{align*}\alpha_{x_k^{-1}h^{-1}x_j}(\alpha_{tx_j}^{-1}(a)b_g)&=
 \alpha_{x_k^{-1}h^{-1}x_j}(\alpha_{tx_j}^{-1}(a))\alpha_{x_k^{-1}h^{-1}x_j}(b_g)
 =
 \alpha_{thx_k}^{-1}(a)\alpha_{x_k^{-1}h^{-1}x_j}(b_g),
\end{align*}
which implies the identity above. Hence $\varphi_\tau(a)$ belongs to $A_{G\cdot\tau}^G$.

(3). Note that $E_\tau$ is well-defined, since
$\alpha_{x_j}(\pi_j(a))$ belongs to $A_\tau$ for all $j=0,\ldots,m$ and 
all $a\in A_{G\cdot\tau}$. Moreover, for $a\in A_{G\cdot\tau}$, we have
\[E_\tau(a)=\frac{1}{|H|(m+1)}\sum_{j,k=0}^m \sum_{h\in H} \alpha_{x_j^{-1}hx_k}(\pi_k(a)).\]
We claim that $E_\tau$ is faithful. Observe that, for $a\geq 0$, we have 
$E_\tau(a)=0$ if and only if $\pi_k(a)=0$ for all $k=0,\ldots,m$.
The claim follows since $a=\sum_{k=0}^m\pi_k(a)$. 

Fix $a\in A_{G\cdot\tau}^G$. Then 
$\alpha_{x_j^{-1}hx_k}(\pi_k(a))=\pi_j(a)$ for all $j,k=0,\ldots,m$
and all $h\in H$, by part~(1) of this theorem. Thus,
\[E_\tau(a)=\frac{1}{|H|(m+1)}\sum_{j,k=0}^m \sum_{h\in H} \alpha_{x_j^{-1}hx_k}(\pi_k(a)) = \frac{1}{|H|(m+1)}\sum_{j,k=0}^m \sum_{h\in H} \pi_j(a)=a,\]
as desired. It follows that $E_\tau$ is an idempotent map. Since it is
easily seen to be contractive, it follows that $E_\tau$ is a conditional
expectation.

(4). We write $E_\tau^x$ for the conditional expectation constructed from the 
elements $x_0=1,x_1,\ldots,x_m$ as in part~(2). 
Let $y_0=1,y_1,\ldots,y_m\in G$ be elements satisfying 
$\tau=\bigsqcup_{j=0}^m Hy_j$, and write $E_\tau^y$ for the corresponding 
conditional expectation. We want to show that $E_\tau^x=E_\tau^y$. 
By part~(3) of \autoref{lma:OrbitTuples}, we may assume without loss of 
generality that there exist $h_1,\ldots,h_m\in H$ satisfying
$y_j=h_jx_j$ for all $j=1,\ldots,m$. 
Let $a\in A_{G\cdot \tau}$. Then 
\begin{align*}
E^y_\tau (a)&= 
\frac{1}{|H|(m+1)}\sum\limits_{j,k=0}^m  \sum\limits_{h\in H} \alpha_{hh_jx_j}^{-1} 
\left(\alpha_{h_kx_k}(\pi_k(a))\right)\\ 
&=
\frac{1}{|H|(m+1)}\sum\limits_{j,k=0}^m  \sum\limits_{h\in H} \alpha_{x_j}^{-1}(\alpha_{h_j^{-1}hh_k}(\alpha_{x_k}(\pi_k(a))))=E_\tau^x(a).
\end{align*}

(5). Let $g\in \tau^{-1}$, and let $\ell=0,\ldots,m$ be the 
unique element satisfying $g\in x_\ell^{-1}H$. Then $g\tau=x_\ell^{-1}\tau$. In
particular, it suffices to assume that $g=x_\ell^{-1}$. 
Set $\sigma=x_\ell^{-1}\tau$.
By part~(4) of this theorem, we may compute $E_{\sigma}$ using any 
decomposition 
$\sigma=H_\sigma\sqcup H_\sigma y_1\sqcup \ldots \sqcup H_\sigma y_m$ 
as in \autoref{lma:OrbitTuples}. In this context, we must have 
$H_\sigma=x_\ell^{-1}H_\tau x_\ell$, and we take $y_j=x_\ell^{-1}x_j$ for 
$j=0,\ldots,m$. Note that $\pi_j^\tau=\pi_j^\sigma$ for all $j=0,\ldots,m$.
Given $a\in A_{G\cdot\tau}$, we have
\begin{align*}
E_{\sigma}(a)&= \frac{1}{|H_\sigma|(m+1)}\sum_{j,k=0}^m\sum_{t\in H_\sigma}
\alpha_{y_j^{-1}ty_k}(\pi_k^\sigma(a))\\
&= \frac{1}{|H_\tau|(m+1)}\sum_{j,k=0}^m\sum_{h\in H_\tau}
\alpha_{x_j^{-1}x_\ell(x_\ell^{-1} hx_\ell) x_\ell^{-1}x_k}(\pi_k^\tau(a))
=E_{\tau}(a).
\end{align*}
 The last statement of the theorem follows from part~(4) of
\autoref{prop:EquivDecomp}.
\end{proof}

Of particular importance is the existence of approximate identities consisting
of $G$-invariant elements. In the setting of global actions, a straightforward 
averaging argument shows that finite group actions
admit invariant approximate identities. 
On the other hand, invariant approximate identities fail to exist in general
for partial actions of finite groups; see \autoref{eg:NoCondExp}.
For decomposable actions, even of infinite groups, 
we establish the existence of $G$-invariant
approximate identities in the following proposition.

\begin{prop}
Let $G$ be a discrete group, let $A$ be a \ca, and let $\alpha=((A_g)_{g\in G}, (\alpha_g)_{g\in G})$ be a
decomposable partial 
action of $G$ on $A$. Then
there exists a $G$-invariant approximate identity for $A$.
\end{prop}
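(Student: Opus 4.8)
The plan is to reduce to the $G$-invariant direct summands and then to average an approximate identity of $A_\tau$ through the map $\varphi_\tau$ from \autoref{prop:CondExp}. Let $n\in\N$ be such that $\alpha$ has the $n$-decomposition property. By part~(4) of \autoref{prop:EquivDecomp}, I identify $A$ with the $C^*$-algebraic direct sum $\bigoplus_{z\in\mathcal{O}_n(G)}A_{G\cdot\tau_z}$ of mutually orthogonal $G$-invariant ideals; moreover, since each $A_{G\cdot\tau_z}$ is a $G$-invariant ideal, the restricted and the ambient notions of $G$-invariance agree on it, so $A_{G\cdot\tau_z}^G\subseteq A^G$. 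It therefore suffices to produce a $G$-invariant approximate identity for each summand $A_{G\cdot\tau}$ and then to assemble these into one for $A$.

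Fix $\tau\in\mathcal{T}_n(G)$ and adopt \autoref{nota:Section}. I choose an approximate identity $(e^\lambda)_{\lambda\in\Lambda}$ for $A_\tau$ and set $u^\lambda=\varphi_\tau(e^\lambda)$. By part~(2) of \autoref{prop:CondExp}, each $u^\lambda$ is a positive contraction lying in $A_{G\cdot\tau}^G$. The key observation is that $u^\lambda$ coincides with the element $e^\lambda_1$ of the equivariant system of approximate identities built in \autoref{thm:ApproxIdentitiesIntProp}: reindexing the defining sum of $\varphi_\tau$ by $h\mapsto h^{-1}$ and using that $1\in x_j^{-1}\tau$ for every $j$ (because $x_j\in\tau$), one obtains
\[\varphi_\tau(e^\lambda)=\frac{1}{|H|}\sum_{j=0}^{m}\sum_{h\in H}\alpha_{x_j^{-1}h}(e^\lambda)=e^\lambda_1.\]
Since $(A_{G\cdot\tau})_1=A_{G\cdot\tau}$, the proof of \autoref{thm:ApproxIdentitiesIntProp} already shows that $(e^\lambda_1)_{\lambda\in\Lambda}$ is an approximate identity for $A_{G\cdot\tau}$. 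Hence $(u^\lambda)_{\lambda\in\Lambda}$ is a $G$-invariant approximate identity for $A_{G\cdot\tau}$.

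Finally, I would assemble these summand-wise approximate identities. Writing $(u_z^\lambda)_{\lambda\in\Lambda_z}$ for the $G$-invariant approximate identity of $A_{G\cdot\tau_z}$ just constructed, let $I$ be the directed set of pairs $(F,(\lambda_z)_{z\in F})$, where $F\subseteq\mathcal{O}_n(G)$ is finite and $\lambda_z\in\Lambda_z$, ordered by inclusion of the finite sets together with the product order on the indices, and set $u_{(F,(\lambda_z))}=\sum_{z\in F}u_z^{\lambda_z}$. Each such element is a positive contraction, because the ideals $A_{G\cdot\tau_z}$ are mutually orthogonal, and it lies in $A^G$, because $A^G$ is a linear subspace (\autoref{lma:FixedCstar}) containing each $A_{G\cdot\tau_z}^G$. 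Using that distinct summands are orthogonal, together with the density of finitely supported elements in the direct sum, a routine $\varepsilon/3$ estimate shows that $(u_{(F,(\lambda_z))})_{(F,(\lambda_z))\in I}$ is an approximate identity for $A$. I expect the only real work to be in this last step: there is no analytic difficulty, but one must organize the nets over the possibly infinite orbit space $\mathcal{O}_n(G)$ into a single directed set and carry out the orthogonality bookkeeping carefully. The genuinely new input—that averaging yields an approximate identity and not merely an invariant element—is supplied for free by the identity $\varphi_\tau(e^\lambda)=e^\lambda_1$ together with \autoref{thm:ApproxIdentitiesIntProp}.
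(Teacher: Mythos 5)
Your proof is correct, and it is a close cousin of the paper's argument---both are averaging arguments resting on \autoref{prop:EquivDecomp}(4) and the machinery of \autoref{prop:CondExp}---but the route through the averaging differs in a way worth recording. The paper starts from an arbitrary approximate identity $(a_\lambda)$ of $A_{G\cdot\tau}$, applies the conditional expectation $E_\tau$, and then verifies by a direct limit computation (using \autoref{rem:QuotientMapsAGtau} and the explicit double-sum formula for $E_\tau$) that $(E_\tau(a_\lambda))$ is still an approximate identity. You instead start from an approximate identity of the smaller ideal $A_\tau$, push it into $A_{G\cdot\tau}^G$ via $\varphi_\tau$, and observe---correctly, after the reindexing $h\mapsto h^{-1}$ and using $\alpha_{hx_j}^{-1}=\alpha_{x_j^{-1}h^{-1}}$ on $A_\tau$---that $\varphi_\tau(e^\lambda)$ is exactly the element $e^\lambda_1$ of the equivariant system constructed in \autoref{thm:ApproxIdentitiesIntProp}, whose proof already establishes that $(e^\lambda_1)_\lambda$ is an approximate identity for $(A_{G\cdot\tau})_1=A_{G\cdot\tau}$. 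This buys you a shorter verification (no new limit computation) at the cost of invoking \autoref{thm:ApproxIdentitiesIntProp}, whereas the paper's version is self-contained modulo \autoref{prop:CondExp} and has the mild extra feature of showing that $E$ maps \emph{any} approximate identity of $A$ to an invariant one. Your extra care in the two places the paper glosses over---checking that $A_{G\cdot\tau_z}^G\subseteq A^G$ (which does hold, since for $b_g\in A_g$ supported on a different orbit both sides of the invariance identity vanish by \autoref{prop:EquivDecomp}(3)) and organizing the summand-wise nets over a possibly infinite orbit space into a single directed set---is welcome and does not hide any difficulty.
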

\begin{proof}
Let $E\colon A\to A^G$ be the canonical conditional expectation
constructed in \autoref{prop:CondExp}. If $(a_\lambda)_{\lambda\in\Lambda}$ 
is an approximate identity for $A$, then $(E(a_\lambda))_{\lambda\in\Lambda}$ 
is an approximate identity for $A^G$. We claim that it is also an 
approximate identity for $A$. Let $n\in\N$ be such that $\alpha$ has the 
$n$-decomposition property.
Using part~(4) of \autoref{prop:EquivDecomp}, 
it suffices to show that $(E_\tau(a_\lambda))_{\lambda\in\Lambda}$
is an approximate identity for $A_{G\cdot\tau}$ whenever
$(a_\lambda)_{\lambda\in\Lambda}$ 
is an approximate identity for $A_{G\cdot\tau}$,
for every $\tau\in\mathcal{T}_n(G)$.

Fix $\tau\in\mathcal{T}_n(G)$, fix $b\in A_{G\cdot\tau}$ and 
fix an approximate identity 
$(a_\lambda)_{\lambda\in\Lambda}$ of $A_{G\cdot\tau}$. 
For $j,k=0,\ldots,m$ and $h\in H$, the net $\left(\alpha_{x_j^{-1}hx_k}(\pi_k(a_\lambda))\right)_{\lambda\in\Lambda}$ is an approximate identity for 
$A_{x_j^{-1}\tau}$. Using this and part~(1) of \autoref{rem:QuotientMapsAGtau} at the second step, and using the identity $b=\sum_{j=0}^m \pi_j(b)$ at 
the third step, we get
\begin{align*}
\lim_{\lambda\in\Lambda}E_\tau(a_\lambda)b&= \frac{1}{|H|(m+1)}\sum_{j,k=0}^m \sum_{h\in H} \lim_{\lambda\in\Lambda}\alpha_{x_j^{-1}hx_k}(\pi_k(a_\lambda))b\\
&= \frac{1}{|H|(m+1)}\sum_{j,k=0}^m \sum_{h\in H} \pi_j(b)
= \frac{1}{|H|(m+1)}\sum_{k=0}^m \sum_{h\in H} b
=b.
\end{align*}
Analogously, one shows that $\lim_{\lambda\in\Lambda}bE_\tau(a_\lambda)=b$. 
It follows that $(E_\tau(a_\lambda))_{\lambda\in\Lambda}$
is an approximate identity for $A_{G\cdot\tau}$, and the proof is finished.
\end{proof}

Fixed point algebras of decomposable partial actions 
can be explicitly described using certain global subsystems of finite subgroups
of $G$, as we show below.

\begin{thm}\label{prop:FixedPtAlg}
Let $G$ be a discrete group, let $A$ be a \ca, let $n\in\N$, and let 
$\alpha=((A_g)_{g\in G}, (\alpha_g)_{g\in G})$ be a partial action of $G$ on $A$ with the
$n$-decomposition property. 
For every $\tau\in\mathcal{T}_n(G)$, the fixed point algebra of $A_{G\cdot \tau}$
can be canonically identified with $A_\tau^{H_\tau}$, via the restriction of 
$\pi^\tau_0$ to $A_{G\cdot\tau}^G$. Under this identification,
the canonical inclusion 
$\iota_\tau\colon A_\tau^{H_\tau}\hookrightarrow A_{G\cdot \tau}$
is given by $\iota_\tau(a)=\sum_{j=0}^{m} \alpha_{x^\tau_j}^{-1}(a)$
for all $a\in A_{\tau}^{H_\tau}$. 
In particular, 
$A^G$ can be canonically identified with 
$\bigoplus\limits_{z\in \mathcal{O}_n(G)} A_{\tau_z}^{H_z}$.
\end{thm}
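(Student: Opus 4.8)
The plan is to reduce everything to a single orbit and prove that $\pi_0^\tau$ restricts to a $*$-isomorphism $A_{G\cdot\tau}^G \xrightarrow{\ \cong\ } A_\tau^{H_\tau}$ whose inverse is exactly the stated $\iota_\tau$; the concluding ``in particular'' assertion then follows at once from the $G$-equivariant decomposition $A\cong\bigoplus_z A_{G\cdot\tau_z}$ of part~(4) of \autoref{prop:EquivDecomp}, since that isomorphism carries $G$-invariant elements to $G$-invariant elements and respects the direct-sum structure (the domains decompose as $A_g=\bigoplus_z(A_{G\cdot\tau_z})_g$ by part~(2) of \autoref{prop:EquivDecomp}, so invariance is checked summand by summand). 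So I would fix $\tau\in\mathcal{T}_n(G)$, adopt \autoref{nota:Section} with $H=H_\tau$, $m=m_\tau$, $x_j=x_j^\tau$ and $x_0=1$ (so that $\pi_0^\tau$ takes values in $A_{x_0^{-1}\tau}=A_\tau$), and rely throughout on the two facts already packaged in \autoref{prop:CondExp}: part~(1), that $\pi_k(a)=\alpha_{x_k^{-1}hx_j}(\pi_j(a))$ for all $a\in A_{G\cdot\tau}^G$, $h\in H$, and $j,k$; and part~(2), that $\varphi_\tau(a)\in A_{G\cdot\tau}^G$ for $a\in A_\tau$.

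First I would check that $\pi_0^\tau$ maps $A_{G\cdot\tau}^G$ into $A_\tau^{H_\tau}$ and is injective there. Taking $j=k=0$ in part~(1) of \autoref{prop:CondExp} gives $\pi_0(a)=\alpha_h(\pi_0(a))$ for every $h\in H$; since $\alpha|_H$ is a global action on $A_\tau$ by part~(1) of \autoref{prop:EquivDecomp}, this says precisely that $\pi_0(a)\in A_\tau^{H_\tau}$. For injectivity I set $j=0$ and let $k$ vary: part~(1) then reads $\pi_k(a)=\alpha_{x_k^{-1}h}(\pi_0(a))$, so $\pi_0(a)=0$ forces $\pi_k(a)=0$ for all $k$; as $A_{G\cdot\tau}\cong\bigoplus_{k=0}^m A_{x_k^{-1}\tau}$ with $a=\sum_k\pi_k(a)$ (see \autoref{rem:QuotientMapsAGtau}), we get $a=0$.

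For surjectivity together with the inverse formula, I would fix $a\in A_\tau^{H_\tau}$ and show that $\iota_\tau(a)=\sum_{j=0}^m\alpha_{x_j}^{-1}(a)$ lies in $A_{G\cdot\tau}^G$ with $\pi_0^\tau(\iota_\tau(a))=a$. The key observation is that $\iota_\tau(a)$ coincides with $\varphi_\tau(a)$ when $a$ is $H$-fixed: using the partial-action composition rule $\alpha_{hx_j}^{-1}=\alpha_{x_j^{-1}h^{-1}}=\alpha_{x_j^{-1}}\circ\alpha_{h^{-1}}$ on $A_\tau$ (the domains match because $H\subseteq\tau$ and $h^{-1}\tau=\tau$) together with $\alpha_{h^{-1}}(a)=a$, the averaging over $H$ collapses and
\[
\varphi_\tau(a)=\frac{1}{|H|}\sum_{j=0}^m\sum_{h\in H}\alpha_{x_j^{-1}}(\alpha_{h^{-1}}(a))=\sum_{j=0}^m\alpha_{x_j}^{-1}(a)=\iota_\tau(a).
\]
By part~(2) of \autoref{prop:CondExp} this element is $G$-invariant. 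Since $\alpha_{x_j}^{-1}(a)\in A_{x_j^{-1}\tau}$ and the ideals $A_{x_j^{-1}\tau}$ are pairwise orthogonal (the tuples $x_j^{-1}\tau$ are distinct by the coset decomposition of \autoref{lma:OrbitTuples}, so \autoref{rem:DisjTuples} applies), the projection $\pi_0^\tau$ isolates the $j=0$ term, giving $\pi_0^\tau(\iota_\tau(a))=\alpha_{x_0}^{-1}(a)=a$. Hence $\pi_0^\tau|_{A_{G\cdot\tau}^G}$ is a bijective $*$-homomorphism onto $A_\tau^{H_\tau}$, and therefore a $*$-isomorphism, with inverse exactly $\iota_\tau$.

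Finally I would assemble the global statement: the equivariant isomorphism of part~(4) of \autoref{prop:EquivDecomp} yields $A^G\cong\bigoplus_z A_{G\cdot\tau_z}^G$, and the per-orbit identification just proved gives $A_{G\cdot\tau_z}^G\cong A_{\tau_z}^{H_z}$, whence $A^G\cong\bigoplus_{z\in\mathcal{O}_n(G)}A_{\tau_z}^{H_z}$. I expect the only genuinely delicate point to be the domain bookkeeping in the identity $\iota_\tau(a)=\varphi_\tau(a)$ — one must confirm that the relevant compositions of partial isomorphisms really are defined on $A_\tau$, and that $H$-invariance of $a$ is precisely what trivializes the average over $H$; once that is in place, everything else is a direct application of the already-established \autoref{prop:CondExp}.
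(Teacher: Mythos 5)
Your proposal is correct and follows essentially the same route as the paper's proof: both reduce to a single orbit, use part~(1) of \autoref{prop:CondExp} to show $\pi_0^\tau$ restricts to an injective homomorphism into $A_\tau^{H_\tau}$, and use part~(2) (the element $\varphi_\tau(a)$) as the surjectivity witness. Your explicit verification that $\varphi_\tau(a)=\iota_\tau(a)$ for $H$-fixed $a$ is just the detail the paper dismisses as ``immediate''.
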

\begin{proof}
By \autoref{rem:EquivDecomp}, it suffices to prove the first statement.
Fix $\tau\in\mathcal{T}_n(G)$; we make the abbreviations
$m=m_\tau$, $H_\tau=H$, $\pi_j^\tau=\pi_j$ and $x_j^\tau=x_j$ for $j=0,\ldots,m$. 

Let $\pi_0\colon A_{G\cdot\tau}\to A_\tau$ be the canonical quotient map
described in \autoref{rem:QuotientMapsAGtau}. By part~(1) 
of \autoref{prop:CondExp}, $\pi_0$ restricts to 
a homomorphism $A_{G\cdot \tau}^G\to A_\tau^H$. We claim that this is an 
isomorphism.
To show surjectivity, let $a\in A_{\tau}^{H}$. 
By part~(1) of \autoref{prop:CondExp}, the element
$\varphi_\tau(a)=\frac{1}{|H|}\sum\limits_{j=0}^m  \sum\limits_{h\in H} \alpha_{hx_j}^{-1}(a)$ is $G$-invariant, and we have
\[
\pi_0(\varphi_\tau(a))=\frac{1}{|H|}\sum\limits_{h\in H}\alpha_{h^{-1}}(a)
=a,
\]
as required. 
To show injectivity, let $a\in A_{G\cdot\tau}^{G}$ satisfy $\pi_0(a)=0$. By part~(1) of~\autoref{prop:CondExp}, it follows that $\pi_j(a)=0$ for all $j=1,\ldots,m$, and therefore $a=\sum_{j=0}^{m}\pi_j(a)=0$. 
The remaining claims in the statement are immediate.
\end{proof}

We remark that under the identification $A_{G\cdot\tau}^G\cong A_{\tau}^{H_\tau}$, the canonical conditional expectations $E_\tau$ described in \autoref{prop:CondExp} become the natural conditional expectations $A_{G\cdot\tau}\to A_\tau^H$ given by $a\mapsto \frac{1}{m+1}\sum\limits_{j=0}^{m} \Big(\frac{1}{|H|}\sum\limits_{h\in H} \alpha_{hx_j}(\pi_j(a))\Big)$.


We turn to the existence of a corner embedding 
$c\colon A^G\to A\rtimes_\alpha G$, which is well-known to exist for global
actions of finite groups. 
In the setting of partial actions, 
such a map does not exist in general, even for finite groups.

\begin{eg}\label{eg:CornerMapNotExist}
Let $\alpha$ be the partial action of $\Z_3=\{0,1,2\}$ on $X=[0,1]$ given by 
letting $U=(0,1]$ be the domain of both $\alpha_1$ and $\alpha_2$, 
with $\alpha_1=\alpha_2=\id_{U}$. 
Then $C(X)^G=C(X)$ and 
$C(X)\rtimes_\alpha\Z_3$ contains no nontrivial projections. 
In particular, there is no corner embedding 
$c\colon C(X)^{\Z_3}\to C(X)\rtimes_\alpha \Z_3$.
\end{eg}
\begin{proof}
For $f\in C(X)$, the identity $\alpha_1(fg)=f\alpha_1(g)$ is automatically
satisfied for all $g\in C_0(U)=C(X)_2$, since $\alpha_1=\id_U$. Similarly,
$\alpha_2(fh)=f\alpha_2(h)$ for all $h\in C(X)_1$, and hence $f\in C(X)^G$. 
It follows that $C(X)^G=C(X)$. 
There is a short exact sequence
\[\xymatrix{0\ar[r] & C_0(U)\rtimes_{\alpha|_U}\Z_3\ar[r] & C(X)\rtimes_\alpha\Z_3\ar[r]^-{\pi}& 
 \C\ar[r]& 0,}
\]
where $C_0(U)\rtimes_{\alpha|_U}\Z_3$ is a global crossed product, and $\C$ is identified
with the crossed product of $C(\{0\})$ by the trivial partial action of $\Z_3$.
Since $\alpha_1=\alpha_2=\id_U$, there is an isomorphism $C_0(U)\rtimes_{\alpha|_U}\Z_3\cong C_0(U)\otimes C^*(\Z_3)$. In particular, this algebra has no 
projections other than zero.

Let $p$ be a projection in $C(X)\rtimes_\alpha\Z_3$. Then $\pi(p)$ is a 
projection in $\C$, so it is either 0 or 1. If $\pi(p)=0$, then $p$ belongs
to the kernel of $\pi$, which is $C_0(U)\rtimes_{\alpha|_U}\Z_3$, and hence
is the zero projection. On the other hand, if $\pi(p)=1$, then 
$1-p$ is a projection in the kernel of $\pi$, and hence it must be zero. Thus 
$p=1$. 

To prove the last statement, if a map $c$ as in the statement exists then
it must be an isomorphism, and thus $C(X)\cong C(X)\rtimes_\alpha \Z_3$. 
However, from the above short exact sequence it is clear that the spectrum
of the abelian algebra $C(X)\rtimes_\alpha\Z_3$ contains a point whose complement
has three connected components. In particular, this space is not homeomorphic
to $X=[0,1]$, and the claim follows. 
\end{proof}

For decomposable partial systems, even for infinite groups,
there does in fact exist a canonical corner map 
$c\colon A^G\to A\rtimes_\alpha G$.

\begin{thm}\label{prop:CornerMap}
Let $G$ be a discrete group, let $A$ be a \ca, let $n\in\N$, and let $\alpha$ be a partial 
action of $G$ on $A$ with the $n$-decomposition property.
Fix $\tau\in\mathcal{T}_n(G)$.  
\be
\item There is a corner-embedding $c_\tau\colon A_{G\cdot \tau}^G\to A_{G\cdot \tau}\rtimes_\alpha G$ given on $a\in A_{G\cdot \tau}^G$ by
\[c_\tau(a)=\frac{1}{|H|(m+1)}\sum_{j,k=0}^m\sum_{h\in H} \pi_j(a)\delta_{x_j^{-1}hx_k}.
\]
\item $c_\tau$ is independent of the elements $x_1,\ldots,x_m$.
\item For $g\in \tau^{-1}$, we have $c_{g\tau}=c_\tau$.
\ee
Thus, the map $c=\bigoplus\limits_{z\in \mathcal{O}_n(G)}c_{\tau_z}\colon A^G\to A\rtimes_\alpha G$ is a canonical corner embedding.
\end{thm}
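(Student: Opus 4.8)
The plan is to verify directly that $c_\tau$ is a well-defined, injective $*$-homomorphism, and then to identify its image with the corner determined by a single projection; parts~(2) and~(3) and the global statement will then follow from the same reindexing arguments used for the conditional expectation in \autoref{prop:CondExp}. First I would check well-definedness: since $hx_k\in H\tau=\tau$ we have $x_j^{-1}hx_k\in x_j^{-1}\tau$, whence $A_{x_j^{-1}\tau}\subseteq A_{x_j^{-1}hx_k}$ and each coefficient $\pi_j(a)$ lies in the correct domain, so every summand $\pi_j(a)\delta_{x_j^{-1}hx_k}$ is a legitimate element of $A_{G\cdot\tau}\rtimes_\alpha G$.

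The algebraic core is multiplicativity. Expanding $c_\tau(a)c_\tau(b)$ as a double sum and applying the product rule $(x\delta_g)(y\delta_{g'})=\alpha_g(\alpha_{g^{-1}}(x)y)\delta_{gg'}$, part~(1) of \autoref{prop:CondExp} gives $\alpha_{g^{-1}}(\pi_j(a))=\pi_k(a)$ for $g=x_j^{-1}hx_k$; orthogonality of the ideals $A_{x_\bullet^{-1}\tau}$ (see \autoref{rem:DisjTuples}) then forces the two inner indices to coincide, after which $\pi_k(a)\pi_k(b)=\pi_k(ab)$ and $\alpha_g(\pi_k(ab))=\pi_j(ab)$, again by \autoref{prop:CondExp}(1). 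Counting the surviving group elements $x_j^{-1}hh'x_{k'}$ reproduces exactly the normalization $\tfrac{1}{|H|(m+1)}$, giving $c_\tau(a)c_\tau(b)=c_\tau(ab)$. The $*$-identity is the same computation using $(\pi_j(a)\delta_{x_j^{-1}hx_k})^*=\alpha_{x_k^{-1}h^{-1}x_j}(\pi_j(a^*))\delta_{x_k^{-1}h^{-1}x_j}=\pi_k(a^*)\delta_{x_k^{-1}h^{-1}x_j}$. Injectivity is then immediate, because the coefficient of $\delta_1$ in $c_\tau(a)$ equals $\tfrac{1}{|H|(m+1)}\sum_j\pi_j(a)=\tfrac{1}{|H|(m+1)}a$ (the only way $x_j^{-1}hx_k=1$ is $j=k$, $h=1$), so $c_\tau(a)=0$ forces $a=0$.

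The main obstacle is the \emph{corner} property, and here the cleanest route is to transport $c_\tau$ through the explicit isomorphism $\Phi\colon A_{G\cdot\tau}\rtimes_\alpha G\xrightarrow{\ \cong\ }M_{m+1}(A_\tau\rtimes H_\tau)$ of \autoref{prop:CrossedProductTupleOrb}, under which $a\delta_{x_j^{-1}hx_k}$ (for $a\in A_{x_j^{-1}\tau}$) corresponds to $\alpha_{x_j}(a)u_h\otimes e_{jk}$. Using the identification $\pi_0\colon A_{G\cdot\tau}^G\xrightarrow{\ \cong\ }A_\tau^{H_\tau}$ of \autoref{prop:FixedPtAlg} together with $\alpha_{x_j}(\pi_j(a))=\pi_0(a)$ (the $k=0$, $h=1$ case of \autoref{prop:CondExp}(1)), a short computation yields $\Phi(c_\tau(a))=c_0(\pi_0(a))\otimes E$, where $c_0(b)=\tfrac{1}{|H|}\sum_{h\in H}u_hb$ is the classical corner embedding $A_\tau^{H_\tau}\hookrightarrow A_\tau\rtimes H_\tau$ and $E=\tfrac{1}{m+1}\sum_{j,k}e_{jk}$ is a rank-one projection in $M_{m+1}(\C)$. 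Since $c_0$ is a corner embedding onto $p_0(A_\tau\rtimes H_\tau)p_0$ with $p_0=\tfrac{1}{|H|}\sum_h u_h$, and $EM_{m+1}(\C)E\cong\C$, the image of $\Phi\circ c_\tau$ is exactly the corner cut out by the projection $p_0\otimes E$; pulling back along $\Phi$ proves that $c_\tau$ is a corner embedding with associated projection $q=\Phi^{-1}(p_0\otimes E)$. (If one prefers to avoid \autoref{prop:CrossedProductTupleOrb}, the same projection can be written intrinsically as $q=\tfrac{1}{|H|(m+1)}\sum_{j,k,h}1_{x_j^{-1}\tau}\delta_{x_j^{-1}hx_k}\in M(A_{G\cdot\tau}\rtimes_\alpha G)$, and one checks $q=q^*=q^2$ and $c_\tau(a)=q(a\delta_1)q$ by the computation of the previous paragraph; the reverse inclusion $q(A_{G\cdot\tau}\rtimes_\alpha G)q\subseteq c_\tau(A_{G\cdot\tau}^G)$ is the delicate point, which is precisely what the matrix picture makes transparent.)

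Finally, parts~(2) and~(3) would be proved exactly as parts~(4) and~(5) of \autoref{prop:CondExp}: replacing the representatives $x_j$ by $h_jx_{\sigma(j)}$ (via \autoref{lma:OrbitTuples}(3)) only reindexes the inner sum over $H$, using that $\pi_j$ is independent of the representative in $Hx_j$ by \autoref{rem:QuotientMapsAGtau}(2); and passing from $\tau$ to $g\tau=x_\ell^{-1}\tau$ is absorbed into the relations $H_{x_\ell^{-1}\tau}=x_\ell^{-1}H_\tau x_\ell$ and $y_j=x_\ell^{-1}x_j$. For the last assertion I would invoke the $G$-equivariant decomposition $A\cong\bigoplus_z A_{G\cdot\tau_z}$ of \autoref{prop:EquivDecomp}(4), which yields $A\rtimes_\alpha G\cong\bigoplus_z(A_{G\cdot\tau_z}\rtimes_\alpha G)$ and $A^G\cong\bigoplus_z A_{G\cdot\tau_z}^G$; a direct sum of corner embeddings (whose projections assemble to $q=\bigoplus_z q_z$) is again a corner embedding, and independence of the chosen section is precisely part~(3).
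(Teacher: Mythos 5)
Your proposal is correct, and the first half (well-definedness, multiplicativity via \autoref{prop:CondExp}(1) and orthogonality of the summands $A_{x_j^{-1}\tau}$, injectivity, and the reindexing arguments for parts (2) and (3)) follows the paper's proof essentially verbatim. Where you genuinely diverge is in the proof that the image is a corner. The paper stays inside the partial crossed product: it introduces the multiplier $c_\tau(1)=\frac{1}{|H|(m+1)}\sum_{j,k,h}1_j\delta_{x_j^{-1}hx_k}$ (your ``intrinsic'' $q$), verifies $q=q^*=q^2$ by hand, and establishes the delicate reverse inclusion $q(A_{G\cdot\tau}\rtimes_\alpha G)q\subseteq c_\tau(A_{G\cdot\tau}^G)$ by directly computing $c_\tau(1)(a\delta_g)c_\tau(1)=c_\tau(E_\tau(a))$, with $E_\tau$ the conditional expectation of \autoref{prop:CondExp}. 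You instead transport everything through the isomorphism $\psi_\tau\colon A_{G\cdot\tau}\rtimes_\alpha G\cong M_{m+1}(A_\tau\rtimes H_\tau)$ and the identification $A_{G\cdot\tau}^G\cong A_\tau^{H_\tau}$, reducing to the classical corner embedding for a global action of the finite group $H_\tau$ tensored with a rank-one projection; your identity $\psi_\tau(c_\tau(a))=c_H(\pi_0(a))\otimes e$ is exactly \autoref{lma:IdentifyCornerMap}, which the paper proves later and \emph{deduces from} this theorem's formula rather than using it as the proof. Your route is legitimate and arguably more transparent, but note two costs: (i) it requires proving \autoref{prop:CrossedProductTupleOrb} first (this creates no circularity, since its proof uses only the covariant-pair construction, but it inverts the paper's order of exposition); and (ii) it outsources the base case to the standard fact that $p_0=\frac{1}{|H|}\sum_h v_h$ cuts out $c_H(A_\tau^{H_\tau})$ in $A_\tau\rtimes H_\tau$, which the paper asserts as well known but whose direct argument is in effect what the paper's self-contained computation of $c_\tau(1)(a\delta_g)c_\tau(1)$ generalizes. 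Either way the mathematics is sound.
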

\begin{proof}
(1). Note that $c_\tau$ is well-defined, since 
$\pi_j(a)\in (A_{G\cdot\tau})_{x_j^{-1}hx_k}$
for all $h\in H$ and all $k=0,\ldots,m$, so that the 
product $\pi_j(a)\delta_{x_j^{-1}hx_k}$ is defined in the partial crossed 
product $A_{G\cdot\tau}\rtimes_\alpha G$. 
Moreover, $c_\tau$ is clearly injective, since $c_\tau(a)=0$ if and only if 
$\pi_j(a)=0$ for all $j=0,\ldots,m$, which is equivalent to $a=0$.

\textbf{Claim:} \emph{$c_\tau$ is a homomorphism}. Let $a,b\in A_{G\cdot\tau}^G$.
Use orthogonality of $\alpha_{x_k^{-1}h^{-1}x_j}(\pi_j(a))$ and $\pi_i(a)$ for $k\neq i$ at the third step, and 
part~(1) of \autoref{prop:CondExp} 
at the fifth step, to get
\begin{align*}
c_\tau(a)c_\tau(b)&=\frac{1}{|H|^2(m+1)^2}\sum_{j,k,i,\ell=0}^m\sum_{h, t\in H} \pi_j(a)\delta_{x_j^{-1}hx_k}\pi_i(a)\delta_{x_i^{-1}tx_\ell}\\
&= \frac{1}{|H|^2(m+1)^2}\sum_{j,k,i,\ell=0}^m\sum_{h, t\in H} \alpha_{x_j^{-1}hx_k}(\alpha_{x_k^{-1}h^{-1}x_j}(\pi_j(a))\pi_i(b))\delta_{x_j^{-1}hx_kx_i^{-1}tx_\ell}\\
&=\frac{1}{|H|^2(m+1)^2}\sum_{j,k,\ell=0}^m\sum_{h, t\in H} \alpha_{x_j^{-1}hx_k}(\alpha_{x_k^{-1}h^{-1}x_j}(\pi_j(a))\pi_k(b))\delta_{x_j^{-1}htx_\ell}\\
&=\frac{1}{|H|^2(m+1)^2}\sum_{j,k,\ell=0}^m\sum_{h, t\in H} \pi_j(a)\alpha_{x_j^{-1}hx_k}(\pi_k(b))\delta_{x_j^{-1}htx_\ell}
\end{align*}
\begin{align*}
&=\frac{1}{|H|^2(m+1)^2}\sum_{j,k,\ell=0}^m\sum_{h, t\in H} \pi_j(ab)\delta_{x_j^{-1}htx_\ell}\\
&=\frac{1}{|H|(m+1)}\sum_{j,\ell=0}^m\sum_{h\in H} \pi_j(ab)\delta_{x_j^{-1}hx_\ell}
= c_\tau(ab).\end{align*} 

The rest of the proof consists in proving that $c_\tau(A_{G\cdot \tau}^G)$ is a corner in 
$A_{G\cdot\tau}\rtimes_\alpha G$. 
To this end, we define a multiplier $c_\tau(1)$ of $A_{G\cdot\tau}\rtimes_\alpha G$ 
by setting 
\[c_\tau(1)(a\delta_g)=\frac{1}{|H|(m+1)}\sum\limits_{j,k=0}^{m}\sum\limits_{h\in H}\alpha_{x_j^{-1}hx_k}(\pi_k(a))\delta_{x_j^{-1}hx_kg}\]
for all $a\in (A_{G\cdot\tau})_g$ and all $g\in G$. 
For $j=0,\ldots,m$,
denote by $1_j$ the unit of the multiplier algebra of 
$A_{x_j^{-1}\tau}$. Then the map $c_\tau(1)$ can be identified, in a 
way compatible with the operations in $A_{G\cdot\tau}\rtimes_\alpha G$,
with the formal linear combination
\[c_\tau(1)=\frac{1}{|H|(m+1)}\sum_{j,k=0}^m\sum_{h\in H} 1_j\delta_{x_j^{-1}hx_k}.\]
(When $A_{G\cdot\tau}$, and hence $A_\tau$, is unital, $c_\tau(1)$ is really
the image of $1\in A_{G\cdot\tau}^G$ under $c_\tau$, hence the notation.)
It is easy to check that $c_\tau(1)$ is a multiplier
of $A_{G\cdot\tau}\rtimes_\alpha G$. 

\textbf{Claim:} \emph{$c_\tau(1)$ is a projection}. Let $a\in (A_{G\cdot\tau})_g$. Since $a=\sum_{j=0}^{m}\pi_j(a)$ and $c_\tau$ is linear, it is enough to assume that $a\in A_{x_k^{-1}\tau}\cap A_g$ for some $k=0,\ldots, m$ and show 
$c_\tau(1)(c_\tau(1)(a\delta_g))=c_\tau(1)(a\delta_g)$. 
We may also assume that $g\in x_k^{-1}\tau$, otherwise the identity is satisfied. Let $\ell\in\{0,\ldots,m\}$ and $t\in H$ be the unique elements with $g=x_k^{-1}tx_\ell$. Then
\begin{align*}
c_\tau(1)\left(c_\tau(1)(a\delta_g)\right)&=
\frac{1}{|H|(m+1)}\sum\limits_{j=0}^m\sum\limits_{h\in H} c_\tau(1)\left(\alpha_{x_j^{-1}hx_k}(a)\delta_{x_j^{-1}htx_\ell}\right)\\
&=\frac{1}{|H|^2(m+1)^2}\sum\limits_{i,j=0}^m\sum\limits_{h,s\in H} 
\alpha_{x_i^{-1}sx_j}(\alpha_{x_j^{-1}hx_k}(a))\delta_{x_i^{-1}shtx_\ell}\\
&= \frac{1}{|H|^2(m+1)^2}\sum\limits_{i,j=0}^m\sum\limits_{h,s\in H} 
\alpha_{x_i^{-1}shx_k}(a)\delta_{x_i^{-1}shtx_\ell}\\
&= \frac{1}{|H|(m+1)}\sum\limits_{i=0}^m\sum\limits_{r\in H} 
\alpha_{x_i^{-1}rx_k}(a)\delta_{x_i^{-1}rtx_\ell}
= c_\tau(1)(a\delta_g),
\end{align*}
so $c_\tau(1)=c_\tau(1)^2$. To check that 
$c_\tau(1)$ is self-adjoint, we use its 
presentation as a formal linear combination to get
\begin{align*}
c_\tau(1)^*&=\frac{1}{|H|(m+1)}\sum_{j,k=0}^m\sum_{h\in H} \left(1_j\delta_{x_j^{-1}hx_k}\right)^*
=\frac{1}{|H|(m+1)}\sum_{j,k=0}^m\sum_{h\in H} \left(1_j\delta_{x_j^{-1}hx_k}\right)^*\\
&= \frac{1}{|H|(m+1)}\sum_{j,k=0}^m\sum_{h\in H} 1_k\delta_{x_k^{-1}h^{-1}x_j}
=c_\tau(1).
%
\end{align*}

\textbf{Claim:} \emph{$c_\tau(A_{G\cdot\tau}^G)=c_\tau(1)(A_{G\cdot\tau}\rtimes_\alpha G)c_\tau(1)$.}
Since $c_\tau(1)c_\tau(a)=c_\tau(a)$ for all $a\in A_{G\cdot\tau}^G$, 
it follows that the left-hand side is contained in the right-hand side. 
We prove the converse inclusion. For this, it is enough to show that an 
element of the form $c_\tau(1)(a\delta_g)c_\tau(1)$, for 
$a\in A_{x_k^{-1}\tau}\cap A_g$, belongs to the image of $c_\tau$, for each $k=0,\ldots, m$. This is
immediate if $g\notin x_k^{-1}\tau$, so assume that $g$ has the form 
$g=x_k^{-1}tx_\ell$ for unique $\ell=0,\ldots,m$ and $t\in H$. Note that $1_ia=0$
unless $i=k$, in which case $1_ia=a$. Similarly, $\alpha_{g^{-1}}(a)1_p=0$ unless $p=\ell$,
in which case $\alpha_{g^{-1}}(a)1_p=\alpha_{g^{-1}}(a)$. 
We have
\begin{align*}
c_\tau(1)(a\delta_g)c_\tau(1)
&= \frac{1}{|H|^2(m+1)^2}\sum_{i,j,p,q=0}^m\sum_{h,s\in H} (1_j\delta_{x_j^{-1}hx_i})(a\delta_g)(1_p\delta_{x_p^{-1}sx_q})\\
&= \frac{1}{|H|^2(m+1)^2}\sum_{i,j,p,q=0}^m\sum_{h,s\in H} \left(\alpha_{x_j^{-1}hx_i}(1_ia)\delta_{x_j^{-1}hx_ig}\right)(1_p\delta_{x_p^{-1}sx_q})\\
&= \frac{1}{|H|^2(m+1)^2}\sum_{j,p,q=0}^m\sum_{h,s\in H} \left(\alpha_{x_j^{-1}hx_k}(a)\delta_{x_j^{-1}htx_\ell}\right)(1_p\delta_{x_p^{-1}sx_q})\\
&= \frac{1}{|H|^2(m+1)^2}\sum_{j,p,q=0}^m\sum_{h,s\in H} 
\alpha_{x_j^{-1}htx_\ell}(\alpha_{x_\ell^{-1}t^{-1}h^{-1}x_j}(\alpha_{x_j^{-1}hx_k}(a))1_p)
\delta_{x_j^{-1}htx_\ell x_p^{-1}sx_q}\\
&= \frac{1}{|H|^2(m+1)^2}\sum_{j,p,q=0}^m\sum_{h,s\in H} 
\alpha_{x_j^{-1}htx_\ell}(\alpha_{x_\ell^{-1}t^{-1}x_k}(a)1_p)
\delta_{x_j^{-1}htx_\ell x_p^{-1}sx_q}\\
&= \frac{1}{|H|^2(m+1)^2}\sum_{j,q=0}^m\sum_{h,s\in H} 
\alpha_{x_j^{-1}hx_k}(a)
\delta_{x_j^{-1}htsx_q}
\end{align*}
Set $b=\frac{1}{|H|(m+1)}\sum\limits_{h\in H} 
\sum\limits_{j=0}^m\alpha_{x_j^{-1}hx_k}(a)$.
With $E_\tau$ denoting the canonical conditional expectation constructed
in \autoref{prop:CondExp}, we have $b=E_\tau(a)$, and hence $b$ belongs 
to $A_{G\cdot\tau}^G$. Finally, it is clear that $c_\tau(b)=c_\tau(1)(a\delta_g)c_\tau(1)$, as desired.

(2). We write $c_\tau^x$ for the corner embedding constructed from the 
elements $x_0=1,x_1,\ldots,x_m$ as in part~(1). 
Let $y_0=1,y_1,\ldots,y_m\in G$ be elements satisfying $\tau=\bigsqcup_{j=0}^m Hy_j$, and write $c_\tau^y$ for the corresponding 
corner embedding. We want to show that $c_\tau^x=c_\tau^y$. 
By part~(3) of \autoref{lma:OrbitTuples}, we may assume without loss of 
generality that there exist $h_1,\ldots,h_m\in H$ satisfying
$y_j=h_jx_j$ for all $j=1,\ldots,m$. 
Let $a\in A_{G\cdot \tau}^G$. Then 
$\pi^y_j(a)=\pi^x_j(\alpha_{h_j}^{-1}(a))=\pi_j^x(a)$ for all $j=0,\ldots,m$. 
Using this at the second step, we get
\begin{align*}
c^y_\tau(a)&=\frac{1}{|H|(m+1)}\sum_{j,k=0}^m\sum_{h\in H} \pi^y_j(a)\delta_{y_j^{-1}hy_k}\\
&=\frac{1}{|H|(m+1)}\sum_{j,k=0}^m\sum_{h\in H} \pi^x_j(a)\delta_{x_j^{-1}h_j^{-1}hh_kx_k}=c_\tau^x(a).
\end{align*}

(3). Let $g\in \tau^{-1}$, and let $\ell=0,\ldots,m$ be the 
unique element satisfying $g\in x_\ell^{-1}H$. Then $g\tau=x_\ell^{-1}\tau$. In
particular, it suffices to assume that $g=x_\ell^{-1}$. 
Set $\sigma=x_\ell^{-1}\tau$.
By part~(2) of this theorem, we may compute $c_{\sigma}$ using any 
decomposition 
$\sigma=H_\sigma\sqcup H_\sigma y_1\sqcup \ldots \sqcup H_\sigma y_m$ 
as in \autoref{lma:OrbitTuples}. In this context, we must have 
$H_\sigma=x_\ell^{-1}H_\tau x_\ell$, and we take $y_j=x_\ell^{-1}x_j$ for 
$j=0,\ldots,m$. Note that $\pi_j^\tau=\pi_j^\sigma$ for all $j=0,\ldots,m$.
Given $a\in A_{G\cdot\tau}^G$, we have
\begin{align*}
c_{\sigma}(a)&= \frac{1}{|H_\sigma|(m+1)}\sum_{j,k=0}^m\sum_{t\in H_\sigma}
\pi^\sigma_j(a)\delta_{y_j^{-1}ty_k}\\
&= \frac{1}{|H_\tau|(m+1)}\sum_{j,k=0}^m\sum_{h\in H_\tau}
\pi^\tau_j(a)\delta_{x_j^{-1}x_\ell (x_\ell^{-1}hx_\ell)x_\ell^{-1}x_k}=c_{\tau}(a).
\end{align*}
The last statement of the theorem follows from part~(3) of
\autoref{prop:EquivDecomp}.
\end{proof}

\section{Computation of the crossed product}
The crossed product of 
a decomposable action was computed, up to Morita equivalence,
in \autoref{cor:CPuptoMorEquiv}.
For some purposes, such as \autoref{thm:Freeness}, it is necessary to have a 
computation of $A\rtimes_\alpha G$ up to isomorphism and not just up to
Morita equivalence. Obtaining such a description is the goal of this section;
see \autoref{prop:CrossedProductTupleOrb}. 
Using this calculation, we provide 
an alternative computation of the partial group algebra $C^*_{\mathrm{par}}(G)$ of a finite group $G$ from \cite{DokExePic_partial_2000}; see \autoref{thm:PartialGpAlg}. Finally, in \autoref{thm:Freeness} we combine the 
results from Section~4 and \autoref{prop:CrossedProductTupleOrb} to give a
characterization of freeness for decomposable topological partial actions, in terms of the corner map $c$ defined in \autoref{prop:CornerMap}. 
This characterization fails in general, even for free partial actions of finite
groups; see \autoref{eg:MoritaEqFails}.

\begin{thm}\label{prop:CrossedProductTupleOrb}
Let $G$ be a discrete group, let $A$ be a \ca, let $n\in\N$, and let 
$\alpha=((A_g)_{g\in G}, (\alpha_g)_{g\in G})$ be a partial action of $G$ on $A$ with the
$n$-decomposition property. 
For every $\tau\in \mathcal{T}_n(G)$, there is a natural isomorphism
\[\psi_\tau \colon A_{G\cdot \tau}\rtimes_{\alpha} G\to M_{m_\tau+1}(A_\tau\rtimes_{\alpha|_{H_\tau}} H_\tau)\]
which satisfies $\psi_\tau(a\delta_{x_j^{-1}hx_k})=\alpha_{x_j}(a)v_h\otimes e_{j,k}$ for all $a\in A_{x_j^{-1}\tau}$, for all $j,k=0,\ldots,m$, and all 
$h\in H$, where $v\colon H\to M(A_\tau\rtimes_{\alpha|_{H}} H)$ denotes the canonical unitary representation.
It follows that $A\rtimes_\alpha G$ can be naturally identified with 
$\bigoplus_{z\in\mathcal{O}_n(G)}M_{m_z+1}(A_z\rtimes_{\alpha|_{H_z}} H_z)$.
\end{thm}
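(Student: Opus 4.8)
The plan is to reduce to a single orbit and then realize $\psi_\tau$ as an isomorphism by defining it on the algebraic partial crossed product, checking it is a $*$-homomorphism, and producing an explicit inverse. By part~(4) of \autoref{prop:EquivDecomp} the algebra $A$ decomposes $G$-equivariantly as $\bigoplus_{z}A_{G\cdot\tau_z}$, so the crossed product splits as a direct sum and it suffices to treat a single $A_{G\cdot\tau}$; the final identification of $A\rtimes_\alpha G$ then follows by assembling the summands. Throughout I would adopt \autoref{nota:Section} and abbreviate $H=H_\tau$, $m=m_\tau$, $x_j=x_j^\tau$.

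First I would record the combinatorial backbone. Since $\tau=\bigsqcup_{k=0}^m Hx_k$, one has $\tau^{-1}\tau=\{x_j^{-1}hx_k : h\in H,\ 0\le j,k\le m\}$, and by part~(2) of \autoref{prop:EquivDecomp} the domain $(A_{G\cdot\tau})_g$ is nonzero only when $g\in\tau^{-1}\tau$. The key uniqueness statement is that for a fixed block index $j$ with $g\in x_j^{-1}\tau$, the equation $x_jg=hx_k$ determines $h\in H$ and $k\in\{0,\ldots,m\}$ uniquely (unique coset $Hx_k$, unique representative), so $g=x_j^{-1}hx_k$ with $(h,k)$ a function of $(g,j)$; this is exactly the partial bijection $\sigma_g$ of \autoref{prop:lowerpa}. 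As $j$ ranges over the blocks in the domain of $g$, the positions $(j,k)$ have distinct rows, and distinct columns since $\sigma_g$ is injective. Using $(A_{G\cdot\tau})_g=\bigoplus_{j:\,g\in x_j^{-1}\tau}A_{x_j^{-1}\tau}$ from \autoref{rem:DisjTuples}, the stated formula together with linearity unambiguously defines $\psi_\tau$ on the algebraic crossed product $\bigoplus_g (A_{G\cdot\tau})_g\,\delta_g$, sending $\delta_g$ to a ``partial permutation'' matrix whose entries lie in $A_\tau\rtimes H$.

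Next I would check that $\psi_\tau$ respects the two defining relations of the partial crossed product. For the product, writing $g=x_j^{-1}hx_k$ on block $j$ and $g'=x_{j'}^{-1}h'x_{k'}$ on block $j'$, a short computation with the decomposition property shows that $(a\delta_g)(b\delta_{g'})$ is nonzero only when $k=j'$, in which case $gg'=x_j^{-1}hh'x_{k'}$; this matches exactly $e_{j,k}e_{k,k'}=e_{j,k'}$ together with $v_hv_{h'}=v_{hh'}$ in $A_\tau\rtimes H$, after using covariance to move $\alpha$'s past the $v$'s. For the adjoint, $(a\delta_g)^*=\alpha_{g^{-1}}(a^*)\delta_{g^{-1}}$ lands in block $k$, and evaluating the formula there reproduces $(\alpha_{x_j}(a)v_h\otimes e_{j,k})^*$ after one more application of covariance. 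Hence $\psi_\tau$ is a $*$-homomorphism on the algebraic crossed product; being a $*$-homomorphism into a $C^*$-algebra it is contractive for the universal norm and extends to $A_{G\cdot\tau}\rtimes_\alpha G$. (By \autoref{cor:CPuptoMorEquiv} the full and reduced crossed products coincide here, so there is no ambiguity about the target.)

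Finally, bijectivity. Surjectivity is immediate: as $a$ runs over $A_{x_j^{-1}\tau}$ the element $\alpha_{x_j}(a)$ runs over all of $A_\tau$, so the image contains every $bv_h\otimes e_{j,k}$ with $b\in A_\tau$, and these generate $M_{m+1}(A_\tau\rtimes H)$. For injectivity I would construct the inverse directly by $bv_h\otimes e_{j,k}\mapsto \alpha_{x_j}^{-1}(b)\delta_{x_j^{-1}hx_k}$, verify (via the universal property of the finite-group crossed product $A_\tau\rtimes H$ and a compatible system of matrix units inside $A_{G\cdot\tau}\rtimes_\alpha G$) that this is a well-defined $*$-homomorphism, and check on generators that the two maps compose to the identity on both sides. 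The main obstacle is not any single step but the bookkeeping: one must keep the block indices, the $H$-component, and the partial-bijection structure of $\sigma_g$ in lockstep, and in particular ensure that a single group element $g$ lying in several blocks' domains is sent consistently to the corresponding collection of matrix entries. Once this bookkeeping is arranged cleanly, the multiplicativity check becomes precisely the assertion that composition in the partial system mirrors matrix multiplication.
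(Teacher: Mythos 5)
Your proposal is correct and follows essentially the same route as the paper: the same reduction to a single orbit via part~(4) of \autoref{prop:EquivDecomp}, the same defining formula for $\psi_\tau$ resting on the unique factorization $g=x_j^{-1}hx_k$, and the same explicit inverse $bv_h\otimes e_{j,k}\mapsto\alpha_{x_j}^{-1}(b)\delta_{x_j^{-1}hx_k}$. The only (cosmetic) difference is that the paper packages the forward map as a covariant pair $(\varphi,u)$ with $u_g=\sum_{j,k}\mathbbm{1}_{\{g\in x_j^{-1}Hx_k\}}v_{x_jgx_k^{-1}}\otimes e_{j,k}$ and invokes the universal property of the partial crossed product, whereas you verify the $*$-algebra relations directly on the algebraic crossed product and then extend by universality of the norm; these are equivalent.
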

\begin{proof}
By \autoref{rem:EquivDecomp}, it suffices to prove the first statement.
Fix $\tau\in\mathcal{T}_n(G)$; we use \autoref{nota:Section} and 
\autoref{rem:QuotientMapsAGtau}.
 Define maps $\varphi\colon A_{G\cdot\tau} \to M_{m+1}(A_\tau\rtimes_{\alpha|_H}H)$ and $u\colon G \to M(M_{m+1}(A_\tau\rtimes_{\alpha|_H}H))\cong M(A_\tau\rtimes_{\alpha|_H}H)\otimes M_{m+1}$ by
\[\varphi(a)=\sum\limits_{j=0}^{m} \alpha_{x_j}(\pi_j(a))\otimes e_{j,j}
 \ \mbox{ and } \ 
 u_g=\sum\limits_{j,k=0}^{m} \mathbbm{1}_{\{g\in x_j^{-1}Hx_k \}} v_{x_jgx_k^{-1}}\otimes e_{j,k}\]
for $a\in A_{G\cdot\tau}$ and $g\in G$. We will see that
$(\varphi, u)$ is a covariant pair for $(A_{G\cdot\tau},\alpha)$.

\textbf{Claim:} \emph{$u$ is a partial representation of $G$.}
To check this, let $g\in G$. We have

\begin{align*}
u_g^*&= \Big(\sum\limits_{j,k=0}^{m} \mathbbm{1}_{\{g\in x_j^{-1}Hx_k \}} v_{x_jgx_k^{-1}}\otimes e_{j,k}\Big)^*\\
&=\sum\limits_{j,k=0}^{m} \mathbbm{1}_{\{g^{-1}\in x_k^{-1}Hx_j \}} v_{x_kg^{-1}x_j^{-1}}\otimes e_{k,j}=u_{g^{-1}}.
\end{align*} 
Moreover, for $g_1,g_2\in G$,
the product $u_{g_1}u_{g_2}u_{g_2^{-1}}$ is equal to:
\begin{align*}
&\Big(\sum\limits_{j,k=0}^{m} \mathbbm{1}_{\{g_1\in x_j^{-1}Hx_k \}} v_{x_jg_1x_k^{-1}}\otimes e_{j,k}\Big)
 \cdot\Big(\sum\limits_{\ell,r=0}^{m} \mathbbm{1}_{\{g_2\in x_\ell^{-1}Hx_r \}} v_{x_\ell g_2x_r^{-1}}\otimes e_{\ell, r}\Big)\\
 & \ \ \ \ \  \ \ \ \ \ \ \ \ \ \ \ \ \ \  \ \ \ \ \ \ \ \ \ \cdot\Big(\sum\limits_{s,t=0}^{m} \mathbbm{1}_{\{g_2^{-1}\in x_s^{-1}Hx_t \}} v_{x_sg_2^{-1}x_t^{-1}}\otimes e_{s,t} \Big)\\
&=\sum\limits_{j,k,r,t=0}^{m} \mathbbm{1}_{\{g_1\in x_j^{-1}Hx_k  \}} \mathbbm{1}_{\{g_2\in x_k^{-1}Hx_r \}}\mathbbm{1}_{\{g_2^{-1}\in x_r^{-1}Hx_t \}}
v_{x_jg_1x_k^{-1}} v_{x_kg_2x_r^{-1}}v_{x_rg_2^{-1}x_t^{-1}}\otimes e_{j,t}\\
&=\sum\limits_{j,k,r=0}^{m} \mathbbm{1}_{\{g_1\in x_j^{-1}Hx_k  \}} \mathbbm{1}_{\{g_2\in x_k^{-1}Hx_r \}}v_{x_jg_1x_k^{-1}}\otimes e_{j,k},
\end{align*}
where at the last step, we use that $x_k^{-1}H\cap x_t^{-1}H=\emptyset$ for $k\neq t$, which implies that
$\mathbbm{1}_{\{g_2\in x_k^{-1}Hx_r \}}\mathbbm{1}_{\{g_2^{-1}\in x_r^{-1}Hx_t \}}=0$ for $k\neq t$ and $r=0,\ldots, m$. On the other hand, 
\begin{align*}
u_{g_1g_2}u_{g_2^{-1}}&=\sum\limits_{j,k,r=0}^{m} \mathbbm{1}_{\{g_1g_2\in x_j^{-1}Hx_r \}}\mathbbm{1}_{\{g_2^{-1}\in x_r^{-1}Hx_k\}} v_{x_jg_1g_2x_r^{-1}}v_{x_rg_2^{-1}x_k^{-1}}\otimes e_{j,k}\\
&=\sum\limits_{j,k,r=0}^{m} \mathbbm{1}_{\{g_1g_2\in x_j^{-1}Hx_r \}}\mathbbm{1}_{\{g_2\in x_k^{-1}Hx_r\}} v_{x_jg_1x_k^{-1}}\otimes e_{j,k}.
\end{align*}

\textbf{Claim:}\emph{ the pair $(\varphi,u)$ is a covariant representation for $(A_{G\cdot \tau},\alpha)$.}
To prove the claim, fix $g\in G$. For $a\in (A_{G\cdot\tau})_{g^{-1}}$, we must show that 
$u_g\varphi(a)u_{g^{-1}}=\varphi(\alpha_g(a))$.
By linearity and the decomposition property, we may assume that there are $k\in\{0,\ldots,m\}$ and $g^{-1}\in x_k^{-1}\tau$ such that $a\in A_{x_k^{-1}\tau}$. There exist unique $j\in\{0,\ldots, m\}$ and $h\in H$ such that $g=x_j^{-1}hx_k$.
Since
$\alpha_g(a)$ belongs to $A_{x_j^{-1}\tau}$, we get
\[\varphi(a)=\alpha_{x_k}(a)\otimes e_{k,k} \ \mbox{ and } \ \varphi(\alpha_g(a))=\alpha_{x_j}(\alpha_g(a))\otimes e_{j,j}.\]
Using at the first step the uniqueness of $j$, we conclude that
\begin{align*}
u_g\varphi(a)u_{g^{-1}}&=(v_h\otimes e_{j,k})(\alpha_{x_k}(a)\otimes e_{k,k})(v_h^*\otimes e_{k,j})\\
&=\alpha_{hx_k}(a)\otimes e_{j,j}
=\alpha_{x_jg}(a)\otimes e_{j,j}=\varphi(\alpha_g(a)),
\end{align*}
and the claim is proved.

By the universal property of the partial crossed product, 
there is a homomorphism
\[\psi_\tau\colon A_{G\cdot \tau}\rtimes G\to M_{m+1}(A_\tau\rtimes_{\alpha|_H}H).\]
satisfying $(\psi_\tau)(a\delta_g)=\varphi(a)u_g$ whenever $g\in G$ and $a\in (A_{G\cdot \tau})_{g}$. It is clear that $\psi_\tau$ satisfies the formula 
given in the statement.

\textbf{Claim:} \emph{the map $\psi_\tau$ is an isomorphism.} We construct an inverse map to $\psi_\tau$. Since $H$ is a finite group, the $C^*$-algebra $M_{m+1}(A_\tau\rtimes_{\alpha|_{H}}H)$ is linearly spanned by elements of the form $av_h\otimes e_{j,k}$, for  $a\in A_\tau$, $h\in H$, and $j,k\in\{0,\ldots,m\}$.
Define a linear map \[\phi_\tau: M_{m+1}(A_\tau\rtimes_{\alpha|_{H}}H)\to A_{G\cdot\tau}\rtimes_\alpha G,\] 
by setting $\phi_\tau(av_h\otimes e_{j,k})=\alpha_{x_j^{-1}}(a)\delta_{x_j^{-1}hx_k}$, for all $a\in A_\tau$, for all $j,k=0,\ldots,m$, and all $h\in H$. It is easily seen that $\phi_\tau\circ\psi_\tau=\id_{A_{G\cdot\tau}\rtimes_\alpha G}$ and $\psi_\tau\circ\phi_\tau=\id_{M_{m+1}(A_\tau\rtimes_{\alpha|_H}H)}$,
by checking these identities on (linear) generators. We conclude
that $\psi_\tau$ is bijective, and hence an isomorphism.
\end{proof}

As a first application of \autoref{prop:CrossedProductTupleOrb}, 
give an alternative proof for the explicit description of the 
partial group algebra of a finite group from \cite{DokExePic_partial_2000}. 

\begin{thm}\label{thm:PartialGpAlg}
Let $G$ be a finite group. Then there is a canonical identification
\[C^*_{\mathrm{par}}(G)\cong \bigoplus_{n=1}^{|G|}\bigoplus_{z\in \mathcal{O}_n(G)}
 M_{m_z+1}(C^*(H_z)).\]
\end{thm}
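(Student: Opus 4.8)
The plan is to combine the known realization of the partial group algebra as a partial crossed product with the computation of \autoref{prop:CrossedProductTupleOrb}. Recall from \cite{Exe_partial_1998} that, for a finite group $G$, there is a canonical identification $C^*_{\mathrm{par}}(G)\cong C(X)\rtimes_{\texttt{Lt}} G$, where $X=\bigsqcup_{n=1}^{|G|}\mathcal{T}_n(G)$ is equipped with the partial action $\texttt{Lt}$ by left multiplication. Since $G$ is finite, $X$ is a finite discrete space, and left translation preserves the cardinality of a subset; hence each stratum $\mathcal{T}_n(G)$ is a $G$-invariant clopen subset of $X$. This produces a $G$-equivariant decomposition $C(X)=\bigoplus_{n=1}^{|G|}C(\mathcal{T}_n(G))$ and therefore a direct sum decomposition
\[
C^*_{\mathrm{par}}(G)\cong\bigoplus_{n=1}^{|G|}\bigl(C(\mathcal{T}_n(G))\rtimes_{\texttt{Lt}} G\bigr)
\]
of the crossed product.

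It then remains to compute each summand. By \autoref{prop:TnGhasInterProp} the partial action $\texttt{Lt}$ of $G$ on $\mathcal{T}_n(G)$ has the $n$-decomposition property, so \autoref{prop:CrossedProductTupleOrb} applies verbatim and gives
\[
C(\mathcal{T}_n(G))\rtimes_{\texttt{Lt}} G\cong\bigoplus_{z\in\mathcal{O}_n(G)}M_{m_z+1}\bigl(A_{\tau_z}\rtimes_{\texttt{Lt}|_{H_z}}H_z\bigr),
\]
where $A_{\tau_z}$ is the ideal of $C(\mathcal{T}_n(G))$ attached to the chosen tuple $\tau_z$. As recorded in the proof of \autoref{prop:TnGhasInterProp}, this ideal equals $C(\{\tau_z\})\cong\C$. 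The stabilizer $H_z$ acts on the single point $\tau_z$, hence trivially on $A_{\tau_z}\cong\C$, so that $A_{\tau_z}\rtimes_{\texttt{Lt}|_{H_z}}H_z=\C\rtimes H_z=C^*(H_z)$.

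Substituting this back yields $C(\mathcal{T}_n(G))\rtimes_{\texttt{Lt}} G\cong\bigoplus_{z\in\mathcal{O}_n(G)}M_{m_z+1}(C^*(H_z))$, and assembling the summands over $n=1,\dots,|G|$ gives the asserted isomorphism. The argument is essentially formal once the realization $C^*_{\mathrm{par}}(G)\cong C(X)\rtimes_{\texttt{Lt}} G$ is available: the only substantive points are the $G$-invariance of each cardinality stratum $\mathcal{T}_n(G)$, which makes the crossed product split as a direct sum, and the triviality of the $H_z$-action on the one-dimensional ideal $A_{\tau_z}$, which is exactly what collapses the factor $A_{\tau_z}\rtimes H_z$ to the group algebra $C^*(H_z)$. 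I therefore do not anticipate a genuine obstacle; this theorem is a direct specialization of \autoref{prop:CrossedProductTupleOrb} to the prototypical decomposable system $G\curvearrowright\mathcal{T}_n(G)$, and the content over that of \cite{DokExePic_partial_2000} lies in re-deriving their description through the decomposition machinery developed here.
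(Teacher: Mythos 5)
Your proof is correct and follows essentially the same route as the paper: identify $C^*_{\mathrm{par}}(G)$ with $C(\bigsqcup_n\mathcal{T}_n(G))\rtimes G$ via \cite{Exe_partial_1998}, split over the cardinality strata, and apply \autoref{prop:TnGhasInterProp} together with \autoref{prop:CrossedProductTupleOrb} to collapse each summand to $M_{m_z+1}(C^*(H_z))$. The extra details you supply (invariance of the strata, triviality of the $H_z$-action on $C(\{\tau_z\})$) are exactly the points the paper leaves implicit.
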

\begin{proof} 
Set $\mathcal{T}(G)=\bigsqcup_{n=1}^{|G|} \mathcal{T}_n(G)$, endowed with
its canonical partial action of $G$.
Recall from Section~6 in~\cite{Exe_partial_1998} that $C^*_{\mathrm{par}}(G)$
can be canonically identified with $C(\mathcal{T}(G))\rtimes G$. On the other 
hand, we have 
$C(\mathcal{T}(G))\rtimes G\cong \bigoplus_{n=1}^{|G|} C(\mathcal{T}_n(G))\rtimes G$,
and for $n=1,\ldots,|G|$, 
the canonical action of $G$ on $C(\mathcal{T}_n(G))$ has the 
$n$-decomposition property by \autoref{prop:TnGhasInterProp}. It follows
from \autoref{prop:CrossedProductTupleOrb} that
\begin{align*} 
C^*_{\mathrm{par}}(G)&\cong \bigoplus_{n=1}^{|G|}\bigoplus_{z\in \mathcal{O}_n(G)}
 M_{m_z+1}(C(\{z\})\rtimes H_z )\cong 
 \bigoplus_{n=1}^{|G|}\bigoplus_{z\in \mathcal{O}_n(G)}
 M_{m_z+1}(C^*(H_z)).\qedhere
 \end{align*}
\end{proof}

We close this section with a second application, this time to 
topological partial actions; see \autoref{thm:Freeness}.
It is well-known that for a finite group action $G\curvearrowright X$ on a locally compact Hausdorff space $X$, freeness is equivalent to the canonical
corner map $c\colon C_0(X)^G\to C_0(X)\rtimes G$ having full range\footnote{This
means that $c(C_0(X)^G)$ is a full corner in $C_0(X)\rtimes G$.}.
(In particular, $C_0(X)^G$ is Morita equivalent to $C_0(X)\rtimes G$.)
This result fails in general for partial actions of finite groups, 
even if one uses $C_0(X/G)$ instead of $C_0(X)^G$. 

Recall that a partial action $\sigma$ of a group $G$ on a topological space $X$
is said to be \emph{free} if whenever $g\in G$ and $x\in X$ satisfy 
$\sigma_g(x)=x$, then $g=1$.

\begin{eg}\label{eg:MoritaEqFails}
Set $X=(0,2]$ and $U=(0,1)\cup (1,2)\subseteq X$, and let $\sigma\colon U\to U$ be the order-2 homeomorphism given by $\sigma(x)=\begin{cases*}
      x+1 & if $x<1$ \\
      x-1 & if $x>1$.
    \end{cases*}$ for $x\in U$. Denote by $\alpha$ the partial action of $G=\Z_2$ on $X$ determined by $\sigma$
(also used in \autoref{eg:NoCondExp}). Then $\alpha$ is free, and 
$C_0(X)\rtimes_\alpha G$ is not Morita equivalent to either $C_0(X)^G$ or 
$C(X/G)$.
\end{eg}
\begin{proof}
That $\alpha$ is free is clear, since $\alpha_1(x)=x$ implies that $x$
belongs to $U$ and $\sigma(x)=x$, which is not possible. Note that
$C_0(X)^G\cong C_0((0,1))$ (see \autoref{eg:NoCondExp}). Moreover,  
$X/G$  can be naturally identified as a set with $[1,2]$. The topology 
on $[1,2]$ induced by this identification is the usual one when 
restricted to $(1,2]$, while a neighborhood base at $\{1\}$ is given by sets of the form $[1,1+\delta)\cup (2-\delta,2)$ for $\delta>0$. This is not a Hausdorff topology since 1 and 2 cannot be separated, but $C(X/G)$ can be identified with $C(S^1)$, since any continuous function on $[1,2]$ must 
take the same value at 1 and 2. 

We show that 
the $K$-groups of $C_0(X)\rtimes_\alpha G$ are not isomorphic to those of
either $C_0(X)^{G}$ or 
$C(X/G)$, from which it will follow that
it is not Morita equivalent to either of them. Note that $K_1(C_0(X)^{G})\cong K_1(C(X/G))\cong \Z$.
There is a short exact sequence
\[\xymatrix{0\ar[r]& C_0(U)\rtimes_\sigma G\ar[r]& C_0(X)\rtimes_\alpha G\ar[r] & C(\{0,1\})\rtimes_{\mathrm{trivial}}G\ar[r]&0,}\]
where the partial action of $G$ on $\{0,1\}$ is the trivial one (with 
trivial domains). Its crossed product is thus $\C\oplus \C$. 
Morever, since $U$ is equivariantly isomorphic to $(0,2)\times G$ with 
$\sigma$ corresponding to the product of the identity on $(0,2)$ and the 
left translation action on $G$, it follows that 
$C_0(U)\rtimes_\sigma G\cong C_0((0,2))\otimes M_2$. The induced six-term 
exact sequence in $K$-theory takes the following form:
\[
\xymatrix{0\ar[r] & K_0(C_0(X)\rtimes_\alpha G)\ar[r] & \Z\oplus\Z \ar[d]\\
0\ar[u]& K_1(C_0(X)\rtimes_\alpha G)\ar[l] & \Z. \ar[l]
}
\]
If $K_1(C_0(X)\rtimes_\alpha G)\cong \Z$, then the bottom-right map is 
an isomorphism, thus the vertical-right map is zero, and thus 
$K_0(C_0(X)\rtimes_\alpha G)\cong \Z\oplus\Z$, which is not isomorphic
to the $K_0$-group of either $C_0(X)^{G}$ or $C(X/G)$. The proof is finished.
\end{proof}

Having identified the crossed product $A_{G\cdot\tau}\rtimes_\alpha G$ with
$M_{m+1}(A_\tau\rtimes_\alpha H)$ in the previous theorem, and having identified $A_{G\cdot \tau}^G$ with 
$A_\tau^H$ in \autoref{prop:FixedPtAlg}, 
in the next lemma we give a description of the corner map 
$c_\tau\colon A_{G\cdot\tau}^G\to A_{G\cdot\tau}\rtimes_\alpha G$ from 
\autoref{prop:CornerMap}. 

\begin{lma}\label{lma:IdentifyCornerMap}
Let $G$ be a discrete group, let $A$ be a \ca, let $n\in\N$, and let 
$\alpha=((A_g)_{g\in G}, (\alpha_g)_{g\in G})$ be a partial action of $G$ on $A$ with the
$n$-decomposition property. 
For $\tau\in \mathcal{T}_n(G)$, let 
$\psi_\tau\colon A_{G\cdot\tau}\rtimes_\alpha G\to 
M_{m+1}(A_\tau\rtimes_\alpha H)$ and $\phi_\tau \colon A_{G\cdot\tau}^G\to A_\tau^H$ be the isomorphisms from
\autoref{prop:CrossedProductTupleOrb} and \autoref{prop:FixedPtAlg}, respectively. Let 
\[c_\tau\colon A_{G\cdot\tau}^G\to A_{G\cdot\tau}\rtimes_\alpha G
\mbox{ and } c_H\colon A_\tau^H\to A_\tau\rtimes_\alpha H
\]
be the canonical corner embeddings described in \autoref{prop:CornerMap}.
(Note that $H\curvearrowright A_\tau$ has the $|H|$-decomposition property,
since it is a global action.) Denote by 
$e\in M_{m+1}$ the rank-one projection given by $e=\frac{1}{m+1}\sum_{j,k=0}^m e_{j,k}$.
Then the composition
\[ \psi_\tau\circ c_\tau\circ \phi_\tau^{-1}\colon A_\tau^H\to M_{m+1}(A_\tau\rtimes_\alpha H)\]
is given by $(\psi_\tau\circ c_\tau\circ \phi_\tau^{-1})(a)=c_H(a)\otimes e$
for all $a\in A_\tau^H$.
\end{lma}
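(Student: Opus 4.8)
The plan is to unwind the three maps $\phi_\tau^{-1}$, $c_\tau$ and $\psi_\tau$ in turn on a fixed element $a\in A_\tau^H$, and then to check that the matrix-unit coefficients produced by the computation collapse to the rank-one projection $e$. Throughout I abbreviate $H=H_\tau$, $m=m_\tau$ and $x_j=x_j^\tau$.

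First I would record the effect of $\phi_\tau^{-1}$. By \autoref{prop:FixedPtAlg}, the inverse of $\phi_\tau$ is the canonical inclusion $A_\tau^H\hookrightarrow A_{G\cdot\tau}$, given by $\phi_\tau^{-1}(a)=\sum_{j=0}^m\alpha_{x_j}^{-1}(a)$. Since $\alpha_{x_j}^{-1}(a)\in A_{x_j^{-1}\tau}$ and these ideals are mutually orthogonal (\autoref{rem:DisjTuples}), it follows that $\pi_j(\phi_\tau^{-1}(a))=\alpha_{x_j}^{-1}(a)$ for every $j=0,\ldots,m$.

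Next I would apply $c_\tau$ using the formula of \autoref{prop:CornerMap}, substituting the values of $\pi_j(\phi_\tau^{-1}(a))$ just computed, to obtain
\[
c_\tau(\phi_\tau^{-1}(a))=\frac{1}{|H|(m+1)}\sum_{j,k=0}^m\sum_{h\in H}\alpha_{x_j}^{-1}(a)\,\delta_{x_j^{-1}hx_k}.
\]
Then I would apply $\psi_\tau$ term by term. Its defining formula (\autoref{prop:CrossedProductTupleOrb}) gives $\psi_\tau(c\,\delta_{x_j^{-1}hx_k})=\alpha_{x_j}(c)v_h\otimes e_{j,k}$ for $c\in A_{x_j^{-1}\tau}$; taking $c=\alpha_{x_j}^{-1}(a)$ and using $\alpha_{x_j}(\alpha_{x_j}^{-1}(a))=a$ (valid since $x_j\in\tau$ forces $a\in A_{x_j}$), each summand becomes $a v_h\otimes e_{j,k}$, which is independent of $j$ and $k$. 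The double sum over $j,k$ then acts only on the matrix units, and since $\sum_{j,k=0}^m e_{j,k}=(m+1)e$ we arrive at
\[
(\psi_\tau\circ c_\tau\circ\phi_\tau^{-1})(a)=\Big(\frac{1}{|H|}\sum_{h\in H}a v_h\Big)\otimes e.
\]

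Finally I would identify the scalar factor as $c_H(a)$. Because $H\curvearrowright A_\tau$ is global, it has the $|H|$-decomposition property realized by the single tuple $H$, with stabilizer $H$ and $m=0$; specializing the formula of \autoref{prop:CornerMap} to this case yields $c_H(a)=\frac{1}{|H|}\sum_{h\in H}a\delta_h=\frac{1}{|H|}\sum_{h\in H}a v_h$, matching the factor above and completing the argument. I do not anticipate any genuine obstacle here: the whole proof is a bookkeeping computation, and the only point requiring care is to ensure that each group element $x_j^{-1}hx_k$ is fed into $\psi_\tau$ with the correct block indices $(j,k)$, so that the matrix-unit coefficients indeed sum to $e$.
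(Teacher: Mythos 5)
Your proposal is correct and follows essentially the same route as the paper: unwind $\phi_\tau^{-1}$, substitute into the formula for $c_\tau$, apply $\psi_\tau$ termwise, and factor the resulting sum as $\bigl(\frac{1}{|H|}\sum_{h\in H}av_h\bigr)\otimes e$. The only (harmless) difference is that you explicitly verify the identification of $\frac{1}{|H|}\sum_{h\in H}av_h$ with $c_H(a)$ by specializing \autoref{prop:CornerMap} to the global case, a step the paper leaves implicit.
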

\begin{proof}
Let $a\in A_\tau^H$. Then $\phi_\tau^{-1}(a)=\sum_{j=0}^m \alpha^{-1}_{x_j}(a)$.
Then 
\[c_\tau(\phi_\tau^{-1}(a))=\frac{1}{|H|(m+1)}\sum_{j,k=0}^m \sum_{h\in H}\alpha_{x_j}^{-1}(a)\delta_{x_j^{-1}hx_k},\]
and applying $\psi_\tau$ to the above expression gives 
\begin{align*}(\psi_\tau\circ c_\tau\circ \phi_\tau^{-1})(a)&=
 \frac{1}{|H|(m+1)}\sum_{j,k=0}^m \sum_{h\in H}(a\otimes e_{j,j})(v_h\otimes e_{j,k})\\
 &=\frac{1}{|H|(m+1)}\sum_{j,k=0}^m \sum_{h\in H}av_h\otimes e_{j,k}\\
 &= \Big(\frac{1}{|H|}\sum_{h\in H}av_h\Big)\otimes \Big(\frac{1}{m+1}\sum_{j,k=0}^m e_{j,k}\Big)
 = c_H(a)\otimes e.\qedhere
\end{align*}
\end{proof}

In the next theorem we obtain the desired characterization of freeness
in terms of the map $c$. We also show that for partial actions with the
decomposition property, $C_0(X)^G$ is canonically isomorphic to $C_0(X/G)$,
while we have seen that this fails in general for partial actions even of
finite groups; see \autoref{eg:MoritaEqFails}.

\begin{thm}\label{thm:Freeness}
Let $G$ be a discrete group, let $X$ be a locally compact Hausdorff space, 
and let 
$\sigma=((X_g)_{g\in G}, (\sigma_g)_{g\in G})$ be a partial action with the
decomposition property. Then:
\be\item There is a natural isomorphism
$C_0(X)^G\cong C_0(X/G)$.
\item $\sigma$ is free if and only if the corner 
embedding $c\colon C_0(X)^G\to C_0(X)\rtimes_\sigma G$ from \autoref{prop:CornerMap}
has full range. 
\ee\end{thm}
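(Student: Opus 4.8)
The plan is to reduce both statements to a single tuple orbit via \autoref{rem:EquivDecomp}, and then to transport each question to the global finite-group action $H_\tau\curvearrowright A_\tau$, where everything is classical. Throughout I would write $A=C_0(X)$, fix $n$ with $\sigma$ having the $n$-decomposition property, and use the decomposition $A\cong\bigoplus_{z\in\mathcal{O}_n(G)}A_{G\cdot\tau_z}$ from \autoref{prop:EquivDecomp}. Since each $A_{G\cdot\tau_z}$ is a $G$-invariant direct summand, it corresponds to a clopen $G$-invariant subset $X_{G\cdot\tau_z}\subseteq X$ with $X=\bigsqcup_z X_{G\cdot\tau_z}$; moreover $X_{G\cdot\tau}=\bigsqcup_{j=0}^m X_{x_j^{-1}\tau}$ with $\sigma_{x_j}$ mapping $X_{x_j^{-1}\tau}$ homeomorphically onto $X_\tau$. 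Both the orbit space and the corner map $c$ split along this decomposition, so it suffices to treat a single $\tau$, writing $H=H_\tau$ and $X_\tau$ for the spectrum of $A_\tau=C_0(X_\tau)$.

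For part (1), the key is the homeomorphism $X_{G\cdot\tau}/G\cong X_\tau/H$. First I would unwind \autoref{df:GInvariantElement} in the commutative case to see that $C_0(X)^G$ is exactly the algebra of functions constant on partial orbits, giving a canonical pullback map $C_0(X/G)\to C_0(X)^G$; it then remains to identify the orbit space. The composite $X_\tau\hookrightarrow X_{G\cdot\tau}\to X_{G\cdot\tau}/G$ is surjective because $\sigma_{x_j}$ carries each piece $X_{x_j^{-1}\tau}$ into $X_\tau$, and I claim it identifies precisely the $H$-orbits: if $y,\sigma_g(y)\in X_\tau$, then condition (b) of \autoref{df:nIntProp} forces $g^{-1}\in\tau$, and $\sigma_g(y)\in X_{g\tau}\cap X_\tau$ then forces $g\tau=\tau$ by \autoref{rem:DisjTuples}, i.e.\ $g\in H$. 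Since the saturation of an open set under a partial action is open (it is a union of images $\sigma_g(V\cap X_{g^{-1}})$), the quotient map is open, so the induced continuous bijection $X_\tau/H\to X_{G\cdot\tau}/G$ is open, hence a homeomorphism; in particular $X/G$ is locally compact Hausdorff. Combining $C_0(X_{G\cdot\tau}/G)\cong C_0(X_\tau/H)\cong C_0(X_\tau)^H=A_\tau^H$ with the identification $A_{G\cdot\tau}^G\cong A_\tau^H$ of \autoref{prop:FixedPtAlg} and checking compatibility with the pullback map yields part (1).

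For part (2), I would first reduce freeness itself to the global system. As the pieces $X_{G\cdot\tau_z}$ are $G$-invariant, $\sigma$ is free iff each $\sigma|_{X_{G\cdot\tau}}$ is, and the latter is free iff $H\curvearrowright X_\tau$ is free: if $H$ acts freely and $\sigma_g(x)=x$ for $x\in X_{x_j^{-1}\tau}$, then $g$ preserves this piece, so $g=x_j^{-1}hx_j$ with $\sigma_h(\sigma_{x_j}(x))=\sigma_{x_j}(x)$, whence $h=1$ and $g=1$; the converse is immediate from $X_\tau\subseteq X_{G\cdot\tau}$. Next I would transport the corner map: by \autoref{lma:IdentifyCornerMap}, under the isomorphisms of \autoref{prop:CrossedProductTupleOrb} and \autoref{prop:FixedPtAlg}, the map $c_\tau$ becomes $a\mapsto c_H(a)\otimes e$, where $c_H$ is the classical corner embedding for $H\curvearrowright A_\tau$ and $e\in M_{m+1}$ is a full rank-one projection. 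Because $M_{m+1}$ is unital and simple, the ideal generated by $c_H(A_\tau^H)\otimes e$ in $M_{m+1}(A_\tau\rtimes H)$ equals $M_{m+1}(J)$, where $J$ is the ideal of $A_\tau\rtimes H$ generated by $c_H(A_\tau^H)$; hence $c_\tau$ has full range iff $c_H$ does. For the global finite-group action it is classical that freeness is equivalent to $c_H$ having full range, and reassembling over $z\in\mathcal{O}_n(G)$, using that an orthogonal corner in a $C^*$-direct sum is full iff each summand corner is, completes the proof.

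The hard part will be the orbit-space identification in part (1): while the set-theoretic bijection $X_\tau/H\to X_{G\cdot\tau}/G$ is forced by the disjointness built into the decomposition property, one must verify it is a homeomorphism for the quotient topology — exactly the point that fails for general partial actions of finite groups, as \autoref{eg:MoritaEqFails} shows. Everything else amounts to transporting classical facts about global finite-group actions through the structural isomorphisms established in the previous sections.
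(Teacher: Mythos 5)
Your proposal is correct and follows essentially the same route as the paper's proof: reduce to a single orbit $A_{G\cdot\tau}$, show $X_{G\cdot\tau}/G\cong X_\tau/H_\tau$ (injectivity from condition~(b) plus disjointness of the $X_\sigma$, openness via openness of the quotient maps), reduce freeness of $\sigma$ to freeness of $H_\tau\curvearrowright X_\tau$ by the same conjugation argument, and transport the corner map through \autoref{lma:IdentifyCornerMap} to the classical characterization for global finite-group actions. The only cosmetic differences are your saturation argument for openness of $\pi_G$ (the paper invokes finiteness of orbits) and the extra detail you supply on why fullness passes through the matrix amplification $c_H(a)\otimes e$.
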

\begin{proof}
Let $n\in\N$ be such that $\alpha$ has the $n$-decomposition property.
For $\tau\in\mathcal{T}_n(G)$, we set $X_\tau=\bigcap_{g\in \tau}X_g$
and $X_{G\cdot \tau}=\bigcup_{g\in \tau}X_{g^{-1}\tau}$.
By the $n$-decomposition property, the sets $X_\tau$ are pairwise
disjoint, and there is an equivariant disjoint-union decomposition 
$X=\bigsqcup_{z\in\mathcal{O}_n(G)} X_{G\cdot \tau_z}$. 
We adopt the conventions from \autoref{nota:Section}.

(1). It suffices to prove the statement for the restriction of $\sigma$
to $X_{G\cdot\tau}$. 
We define a function $\varphi\colon X_\tau/H\to X_{G\cdot \tau}/G$ given by 
$\varphi(\mathrm{orb}_H(x))=\mathrm{orb}_G(x)$ for all $x\in X_\tau$.
Note that $\varphi$ is well-defined. 
We claim that $\varphi$ is a homeomorphism. 

Let $x,y\in X_\tau$ satisfy $\mathrm{orb}_G(x)=\mathrm{orb}_G(y)$, and let 
$g\in G$ satisfy $g\cdot x=y$. Then $g\cdot \tau=\tau$, and thus
$g\in H$. We conclude that $\mathrm{orb}_H(x)=\mathrm{orb}_H(y)$ 
as desired. 
To check surjectivity, let $x\in X_{G\cdot \tau}$. Choose $j=0,\ldots,m$
such that $x\in X_{x_j^{-1}\tau}$. Then $x_j\cdot x$ belongs to $X_\tau$, and 
\[\mathrm{orb}_G(x)=\mathrm{orb}_G(x_jx)=\varphi(\mathrm{orb}_H(x_jx)),\]
as desired. 
Finally, we show that $\varphi$ is open. Consider the commutative diagram
\begin{align*}
\xymatrix{
X_\tau \ar@{^{(}->}[r]\ar[d]_-{\pi_H}& X_{G\cdot\tau}\ar[d]^-{\pi_G}\\
X_\tau/H \ar[r]_-{\varphi} & X_{G\cdot \tau}/G,
}
\end{align*}
where $\pi_H$ and $\pi_G$ are the canonical quotient maps. Both these 
maps are open and finite-to-one: this is clear for $\pi_H$, while for $\pi_G$
it follows from the fact that the orbits of $G\curvearrowright X_{G\cdot\tau}$
are finite. Since the inclusion $X_\tau\hookrightarrow X_{G\cdot\tau}$ is open,
we deduce that $\varphi$ is open, as desired.

Using \autoref{prop:FixedPtAlg} at the first step; using that $H\curvearrowright X_\tau$ is a global action of a finite group at the second step; and using
the above claim at the third step, we obtain the following natural isomorphisms:
\[C_0(X_{G\cdot \tau})^G\cong C_0(X_\tau)^H \cong C_0(X_\tau/H)\cong 
 C_0(X_{G\cdot\tau}/G).
\]

(2). We begin by proving that $\sigma$ is free if and only if 
$H_\tau\curvearrowright^{\sigma|_{H_\tau}} X_\tau$ is free for every 
$\tau\in\mathcal{T}_n(G)$.
By the preceding comments, it suffices to show, for a fixed 
$\tau\in\mathcal{T}_n(G)$, that the partial action of $G$ on 
$X_{G\cdot \tau}$ is free if and only if $H_\tau \curvearrowright X_\tau$ is free. 
Note that the ``only if'' implication is immediate. 
Conversely, assume that $H_\tau\curvearrowright X_\tau$ is 
free, and let $g\in G$ and $x\in X_{G\cdot\tau}\cap X_{g}
=\bigsqcup_{0\leq j\leq m\colon g\in x_j^{-1}\tau}X_{x_j^{-1}\tau}$ satisfy $\sigma_{g^{-1}}(x)=x$. There exists a unique $j\in\{0,\ldots,m\}$ such that $x\in X_{x_j^{-1}\tau}$ and $g\in x_j^{-1}\tau$.
As $x_j^{-1}\tau=\bigsqcup_{k=0}^{m}x_j^{-1}Hx_k$, we let $k\in \{0,\ldots,m\}$ and $h\in H$ be the unique elements such that $g= x_j^{-1}hx_k$. 
Then $x=\sigma_{g^{-1}}(x)\in\sigma_{g^{-1}}(X_{x_j^{-1}\tau})\subseteq X_{x_k^{-1}\tau}$. It follows that $j=k$. Set 
$y=\sigma_{x_j}(x) \in X_\tau$. Then
\[\sigma_h(y)=\sigma_{hx_j}(x)=\sigma_{x_j}\big(\sigma_{x_j^{-1}hx_j}(x)\big)=\sigma_{x_j}(x)=y.\]
Since the action $H_\tau\curvearrowright X_\tau$ is free, it follows that 
$h=1$ and hence $g=x_j^{-1}hx_j=1$.

We turn to the statement of the theorem. We have just shown that $\sigma$ is
free if and only if $H_\tau\curvearrowright X_\tau$ is free for every 
$\tau\in\mathcal{T}_n(G)$. Since these are global actions of finite groups on locally
compact Hausdorff spaces, this is in turn equivalent to the canonical
corner map
$C_0(X_\tau)^{H_\tau}\to C_0(X_\tau)\rtimes H_\tau$ having full range 
for every $\tau\in\mathcal{T}_n(G)$ (see 
Proposition~7.1.12 and Theorem~7.2.6 in~\cite{Phi_equivariant_1987}, noting that saturation
means precisely that the corner embedding from the fixed
point algebra into the crossed product has full range).
By \autoref{lma:IdentifyCornerMap}, this is itself equivalent to the canonical corner map 
$c_\tau\colon C_0(X_{G\cdot \tau})^G\to C_0(X_{G\cdot \tau})\rtimes_\alpha G$ 
from \autoref{prop:CornerMap} having full range, for all 
$\tau\in\mathcal{T}_n(G)$. In turn, this is equivalent to the canonical
corner map $c\colon C_0(X)^G\to C_0(X)\rtimes_\alpha G$ having full range,
as desired.
\end{proof}

\section{Decompositions of partial actions of finite groups}
In this final section, we show that \emph{any} partial action of a 
finite group can be written as
an iterated extension of decomposable partial actions;
see \autoref{thm:ExtensionIntersProp}.
This allows us to reduce the computation of such partial 
crossed products to an extension problem for global crossed products. 
As an application, 
we prove that crossed products of partial actions of finite
groups preserve the property of having finite 
stable rank; see \autoref{cor:FiniteGpCParbitraryPreserve}.


\begin{thm}\label{thm:ExtensionIntersProp}
Let $G$ be a finite group, let $A$ be a \ca, and let 
$\alpha$ be a partial action of $G$ on $A$. 
Then there are canonical equivariant extensions 
\[\xymatrix{0\ar[r] & (D^{(k)},\delta^{(k)}) \ar[r]& (A^{(k)},\alpha^{(k)})\ar[r] & (A^{(k-1)},\alpha^{(k-1)})\ar[r]&0,}
\]
for $2\leq k \leq |G|$, with $(A^{(|G|)},\alpha^{(|G|)})=(A,\alpha)$ and satisfying the following properties 
\be
\item[(a)] $\delta^{(k)}$ has the $k$-decomposition property;
\item[(b)] $A_\sigma^{(k)}=\{0\}$ for all $\sigma\in \mathcal{T}_{k+1}(G)$;
\item[(c)] $\alpha^{(1)}$ has the 1-decomposition property.
\ee
Thus $\alpha$ can be 
written canonically as an iterated extension of 
decomposable
partial actions.
\end{thm}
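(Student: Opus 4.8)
The plan is to build the whole tower from a single canonically defined, decreasing chain of $G$-invariant ideals. For $1\le k\le |G|+1$ set
\[I_k=\sum_{\tau\in\mathcal{T}_k(G)}A_\tau,\]
which is a finite sum of ideals, hence a closed ideal, since $G$ is finite. One checks at once that $I_1=A_{\{1\}}=A$ and $I_{|G|+1}=0$ (as $\mathcal{T}_{|G|+1}(G)=\varnothing$), and that $I_{k+1}\subseteq I_k$: for $\sigma\in\mathcal{T}_{k+1}(G)$ and any $\tau\subseteq\sigma$ with $\tau\in\mathcal{T}_k(G)$ one has $A_\sigma\subseteq A_\tau$. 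I would then define $A^{(k)}=A/I_{k+1}$ and $D^{(k)}=I_k/I_{k+1}$, with $\alpha^{(k)}$ and $\delta^{(k)}$ the induced partial actions. This immediately produces the extensions $0\to D^{(k)}\to A^{(k)}\to A^{(k-1)}\to 0$ for $2\le k\le |G|$, with $A^{(|G|)}=A/I_{|G|+1}=A$, so only (a), (b), (c) remain.

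The first substantive step is to verify that every $I_k$ is $G$-invariant, so that the quotients carry induced partial actions and the sequences are equivariant. Using distributivity of the lattice of ideals one has $I_k\cap A_{g^{-1}}=\sum_{\tau}(A_\tau\cap A_{g^{-1}})=\sum_\tau A_{\tau\cup\{g^{-1}\}}$, and the standard partial-action identity $\alpha_g(A_{g^{-1}}\cap A_F)=A_g\cap A_{gF}$ gives $\alpha_g(A_{\tau\cup\{g^{-1}\}})=A_g\cap A_{g\tau\cup\{1\}}$. The normalized tuple $g\tau\cup\{1\}$ contains $1$ and has cardinality $k$ or $k+1$, so $A_{g\tau\cup\{1\}}$ lies in $I_k$ or in $I_{k+1}\subseteq I_k$. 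Hence $\alpha_g(I_k\cap A_{g^{-1}})\subseteq I_k\cap A_g$, and equality follows by applying the same argument to $g^{-1}$.

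The second step computes domains in the quotients, using finite distributivity again. By induction on $|\sigma|$ one gets $\bigcap_{h\in\sigma}(A_h+I_{k+1})=A_\sigma+I_{k+1}$, so $A^{(k)}_\sigma$ is the image of $A_\sigma+I_{k+1}$. For $\sigma\in\mathcal{T}_{k+1}(G)$ one has $A_\sigma\subseteq I_{k+1}$, whence $A^{(k)}_\sigma=\{0\}$, giving (b); and for $g\ne 1$, $A_g=A_{\{1,g\}}\subseteq I_2$ gives $A^{(1)}_g=\{0\}$, which is (c) via \autoref{eg:IntProp}. For (a), since $A_\tau\subseteq I_k$ for $\tau\in\mathcal{T}_k(G)$, the image $A^{(k)}_\tau$ lies in $I_k/I_{k+1}=D^{(k)}$, so $(D^{(k)})_\tau=A^{(k)}_\tau$; summing over $\tau\in\mathcal{T}_k(G)$ recovers $(I_k+I_{k+1})/I_{k+1}=D^{(k)}$, which is condition~(a) of the $k$-decomposition property of \autoref{df:nIntProp}. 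Finally, for $\tau\in\mathcal{T}_k(G)$ and $g\notin\tau$,
\[(D^{(k)})_\tau\cap (D^{(k)})_g\subseteq A^{(k)}_\tau\cap A^{(k)}_g=\big(A_{\tau\cup\{g\}}+I_{k+1}\big)/I_{k+1}=\{0\},\]
since $\tau\cup\{g\}\in\mathcal{T}_{k+1}(G)$; this is condition~(b), so $\delta^{(k)}$ has the $k$-decomposition property.

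I expect the main obstacle to be the $G$-invariance of the $I_k$: translating a tuple $\tau\ni 1$ by $g$ need not yield a tuple containing $1$, and it is precisely the inclusion $I_{k+1}\subseteq I_k$ together with the normalization $g\tau\rightsquigarrow g\tau\cup\{1\}$ that absorbs this discrepancy. Everything else reduces to distributivity of the ideal lattice and is routine. Canonicity of the whole tower is then clear, since the chain $(I_k)_k$ is defined directly from $\alpha$ with no choices involved.
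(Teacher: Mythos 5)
Your proof is correct and follows essentially the same route as the paper: the subquotients $I_k/I_{k+1}$ of your filtration $I_k=\sum_{\tau\in\mathcal{T}_k(G)}A_\tau$ coincide with the ideals $D^{(k)}$ that the paper constructs inductively inside the iterated quotients, and the verifications of (a)--(c) rest on the same observations. The only difference is organizational: you package the tower as a single decreasing chain of $G$-invariant ideals of $A$, making canonicity and the inductive condition (b) automatic via distributivity of the ideal lattice, whereas the paper rebuilds $D^{(k)}$ stage by stage in each successive quotient.
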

\begin{proof}
Set $n=|G|$ and take $(A^{(n)},\alpha^{(n)})=(A,\alpha)$.
Define $D^{(n)}=\bigcap_{g\in G} A^{(n)}_g$, which is a $G$-invariant 
ideal in $A^{(n)}$. The 
restriction $\delta^{(n)}$ of $\alpha^{(n)}$ to $D^{(n)}$ is a 
global action. By item~(2) in~\autoref{eg:IntProp}, we deduce that
$\delta^{(n)}$ has the $n$-decomposition property. Denote 
by $A^{(n-1)}$ the quotient of $A^{(n)}$ by $D^{(n)}$, and 
let 
$\pi^{(n)}\colon A^{(n)}\to A^{(n-1)}$ be the canonical equivariant
quotient map. 
Then 
\[\bigcap_{g\in G}A_g^{(n-1)}=\pi^{(n)}\Big(\bigcap_{g\in G}A_g^{(n)}\Big)
=\pi^{(n)}(D^{(n)})=0.\]
In other words, $A_\sigma^{(n-1)}=\{0\}$ for all $\sigma\in \mathcal{T}_{n}(G)=\{G\}$.
Assuming that $(A^{(k)},\alpha^{(k)})$ has been constructed and satisfies
condition (b) above, we will construct 
$(D^{(k)},\delta^{(k)})$ and $(A^{(k-1)},\alpha^{(k-1)})$. We set 
$D^{(k)}=\sum_{\tau\in \mathcal{T}_k(G)}A^{(k)}_\tau$, which is 
a $G$-invariant ideal in $A^{(k)}$. Let $\delta^{(k)}$ be the induced
action on it; we claim that $\delta^{(k)}$ has the $k$-decomposition
property. 

For $g\in G$, we have $D^{(k)}_g=\sum_{\tau\in \mathcal{T}_k(G)_g}A^{(k)}_\tau$, and hence $D^{(k)}_\tau=A^{(k)}_\tau$ for all $\tau\in\mathcal{T}_k(G)$. 
It follows that $D^{(k)}$ satisfies condition~(a) in \autoref{df:nIntProp}. In 
order to verify condition~(b), let $\tau\in\mathcal{T}_k(G)$ and let 
$g\notin\tau$. Set $\sigma=\tau\cup\{g\}$, which is a tuple in 
$\mathcal{T}_{k+1}(G)$. Using condition~(b) of the inductive step at the first
step, we get
\[\{0\}=A^{(k)}_\sigma=A_\tau^{(k)}\cap A_g^{(k)}=D_\tau^{(k)}\cap A_g^{(k)}.\]
Since $D_g^{(k)}\subseteq A_g^{(k)}$, it follows that 
$D_\tau^{(k)}\cap D_g^{(k)}=\{0\}$ as desired. Thus $\delta^{(k)}$ has
the $k$-decomposition property.
Let
$A^{(k-1)}$ denote the quotient of $A^{(k)}$ by $D^{(k)}$,
and let $\pi^{(k)}\colon A^{(k)}\to A^{(k-1)}$ be the canonical equivariant
quotient map. 
Then 
\[\sum_{\tau\in\mathcal{T}_k(G)}A_\tau^{(k-1)}=\pi^{(k)}\Big(\sum_{\tau\in\mathcal{T}_k(G)}A_\tau^{(k)}\Big)
=\pi^{(k)}(D^{(k)})=0.\]
In other words, $A_\tau^{(k-1)}=\{0\}$ for all $\tau\in \mathcal{T}_{k}(G)$.
We have thus established conditions (a) and (b) in the statement. Condition~(c) follows from taking $k=1$ in condition~(b), since in this 
case we have $A^{(1)}_g=\{0\}$ for all $g\in G\setminus\{1\}$, which is
equivalent to $\alpha^{(1)}$ having the 1-decomposition property 
by item~(1) in~\autoref{eg:IntProp}. This concludes the proof.
\end{proof}

It follows that crossed products
of partial actions of finite groups can be
written as iterated extensions of crossed products of actions with the 
decomposition property, which in turn can be explicitly computed using
\autoref{prop:CrossedProductTupleOrb}. This fact has several applications
to the structure of partial crossed products. 
We summarize a relatively direct consequence in the following corollary,
while more involved applications are presented
in~\cite{AbaGarGef_partial_2020}. 

\begin{cor}\label{cor:FiniteGpCParbitraryPreserve}
Let $G$ be a finite group, and let 
$\mathbf{P}$ be a property for \ca s which 
passes to ideals, quotients, extensions, is
stable under tensoring with matrix algebras, and is preserved
by formation of crossed products by
global actions of $G$. Then $\mathbf{P}$ is 
preserved by formation of crossed products by
\emph{partial} actions of $G$.
This applies, in particular, to the properties
of being nuclear, having finite stable rank, or being
of type I.
\end{cor}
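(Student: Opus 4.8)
The plan is to combine the canonical decomposition of $\alpha$ from \autoref{thm:ExtensionIntersProp} with the explicit computation of decomposable crossed products in \autoref{prop:CrossedProductTupleOrb}, and then induct up the resulting tower of extensions. Assume $A$ has $\mathbf{P}$. By \autoref{thm:ExtensionIntersProp} there are equivariant extensions $0\to D^{(k)}\to A^{(k)}\to A^{(k-1)}\to 0$ for $2\le k\le |G|$, with $A^{(|G|)}=A$, each $\delta^{(k)}$ decomposable, and $\alpha^{(1)}$ trivial. Working with full crossed products throughout (consistent with \autoref{prop:CrossedProductTupleOrb}), and using that $-\rtimes G$ is exact on equivariant extensions, each of these induces a short exact sequence $0\to D^{(k)}\rtimes G\to A^{(k)}\rtimes G\to A^{(k-1)}\rtimes G\to 0$. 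Since $\mathbf{P}$ is closed under extensions, it suffices to show that $A^{(1)}\rtimes G$ and every $D^{(k)}\rtimes G$ have $\mathbf{P}$, and then conclude by induction on $k$ that $A\rtimes G$ has $\mathbf{P}$. For the base case, $\alpha^{(1)}$ is the trivial partial action by \autoref{eg:IntProp}, so $A^{(1)}\rtimes G\cong A^{(1)}$; as $A^{(1)}$ is an iterated quotient of $A$ and $\mathbf{P}$ passes to quotients, $A^{(1)}\rtimes G$ has $\mathbf{P}$.

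It remains to treat the decomposable layers. Each $A^{(k)}$ is a quotient of $A$ and each $D^{(k)}$ is an ideal in $A^{(k)}$, so $D^{(k)}$---and hence each of its ideals $(D^{(k)})_{\tau}=\bigcap_{g\in\tau}(D^{(k)})_g$---inherits $\mathbf{P}$. Applying \autoref{prop:CrossedProductTupleOrb} to $\delta^{(k)}$ gives $D^{(k)}\rtimes G\cong\bigoplus_{z\in\mathcal{O}_k(G)}M_{m_z+1}\big((D^{(k)})_{\tau_z}\rtimes_{\delta^{(k)}|_{H_z}}H_z\big)$, a finite direct sum (as $G$ is finite) of matrix amplifications of \emph{global} crossed products of algebras with $\mathbf{P}$ by the finite \emph{subgroups} $H_z\le G$. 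Since finite direct sums are iterated extensions and $\mathbf{P}$ is matrix-stable, once each $(D^{(k)})_{\tau_z}\rtimes H_z$ is known to have $\mathbf{P}$, it follows that $D^{(k)}\rtimes G$ does.

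The main obstacle is exactly this last point: the hypothesis furnishes preservation of $\mathbf{P}$ under global crossed products by $G$, whereas the summands above are crossed products by proper subgroups $H_z$. I would therefore isolate a \emph{subgroup lemma}: if $\mathbf{P}$ is closed under ideals, quotients and extensions, is matrix-stable, and is preserved by global crossed products by $G$, then it is preserved by global crossed products by every subgroup $H\le G$. To prove it, given $C$ with $\mathbf{P}$ and a global action $\gamma$ of $H$, I would form the induced system $(\mathrm{Ind}_H^G(C),\mathrm{Ind}_H^G(\gamma))$ of \autoref{df:Ind}; since $H$ has finite index, $\mathrm{Ind}_H^G(C)\cong\bigoplus_{G/H}C$ is a finite direct sum of copies of $C$, hence has $\mathbf{P}$, so $\mathrm{Ind}_H^G(C)\rtimes G$ has $\mathbf{P}$ by hypothesis. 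The finite-index case of Green's imprimitivity theorem (see \autoref{rem:green} and \cite{Wil_crossed_2007}) upgrades the usual Morita equivalence to an honest isomorphism $\mathrm{Ind}_H^G(C)\rtimes G\cong M_{[G:H]}(C\rtimes_\gamma H)$, whence matrix-stability (in the descending direction) yields that $C\rtimes_\gamma H$ has $\mathbf{P}$. Here I read ``stable under tensoring with matrix algebras'' as the two-sided statement that $B$ has $\mathbf{P}$ if and only if $M_n(B)$ does.

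With the subgroup lemma in hand the argument closes: each $(D^{(k)})_{\tau_z}\rtimes H_z$ has $\mathbf{P}$, hence so does each $D^{(k)}\rtimes G$, and the induction up the tower gives that $A\rtimes_\alpha G$ has $\mathbf{P}$. For the concrete applications, nuclearity, finite stable rank, and being of type~I each pass to ideals, quotients and extensions, are matrix-stable, and are preserved by global crossed products by finite groups; for these one may even bypass the subgroup lemma, since they are preserved by global crossed products by \emph{all} finite groups, the $H_z$ included.
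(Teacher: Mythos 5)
Your proof follows the same route as the paper: push $\mathbf{P}$ up the tower of extensions from \autoref{thm:ExtensionIntersProp} by induction, using exactness of the (full) crossed product on equivariant extensions, and handle each decomposable layer $D^{(k)}\rtimes_{\delta^{(k)}}G$ via the explicit isomorphism of \autoref{prop:CrossedProductTupleOrb} with a finite direct sum of matrix amplifications of global crossed products by the subgroups $H_\tau$. The one place where you genuinely add something is the \emph{subgroup lemma}. The paper's proof ends by invoking preservation of $\mathbf{P}$ under ``formation of global crossed products, by assumption,'' even though the hypothesis is literally about crossed products by $G$ while the summands are crossed products by proper subgroups $H_\tau\leq G$; the paper leaves this reduction implicit. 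Your argument via $\mathrm{Ind}_H^G(C)\cong\bigoplus_{[G:H]}C$ and the finite-index form of Green's imprimitivity theorem, $\mathrm{Ind}_H^G(C)\rtimes G\cong M_{[G:H]}(C\rtimes_\gamma H)$, closes that gap cleanly, at the modest cost of reading matrix-stability as a two-sided condition ($B$ has $\mathbf{P}$ if and only if $M_n(B)$ does) --- a reading that holds for all the intended applications and is arguably what the hypothesis should say. For the concrete properties listed, your observation that one can bypass the lemma entirely (since they are preserved by crossed products by all finite groups) matches what the paper implicitly relies on. In short: same architecture, but your version is the more careful one.
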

\begin{proof}
Let $(D^{(k)},\delta^{(k)})$ and $(A^{(k)}, \alpha^{(k)})$ be partial dynamical systems as 
in the conclusion
of \autoref{thm:ExtensionIntersProp}. 
For $k=2,\ldots,|G|$, we apply crossed products
to get the extension
\[\label{eqn:6.1}\tag{6.1}
\xymatrix{0\ar[r] & D^{(k)}\rtimes_{\delta^{(k)}}G \ar[r]& A^{(k)}\rtimes_{\alpha^{(k)}}G \ar[r] & A^{(k-1)}\rtimes_{\alpha^{(k-1)}}G\ar[r]&0.}
\]
Note that $A^{(k)}$ satisfies $\mathbf{P}$ for all 
$k=1,\ldots,|G|$, being a quotient of $A=A^{(|G|)}$,
and that the same is true for $D^{(k)}$ for 
$k=2,\ldots,|G|$, being an ideal in $A^{(k)}$.

We will show by induction that $A^{(k)}\rtimes_{\alpha^{(k)}}G$ satisfies $\mathbf{P}$.
For $k=1$, we have
$A^{(1)}\rtimes_{\alpha^{(1)}}G=A^{(1)}$,
which satisfies
$\mathbf{P}$.  
Assume now that we have proved the claim 
for $k-1$, and let us prove
it for $k$. 
Since \textbf{P} passes to extensions, the exact sequence
in ($\ref{eqn:6.1}$) 
implies that it suffices to show that
$D^{(k)}\rtimes_{\delta^{(k)}} G$ satisfies \textbf{P}.
Since $\delta^{(k)}$ has the $k$-decomposition
property, it follows from \autoref{prop:CrossedProductTupleOrb} that 
$D^{(k)}\rtimes_{\delta^{(k)}} G$ is isomorphic to a finite
direct sum of algebras of the form 
$M_{m_\tau}(D_\tau^{(k)}\rtimes_{\delta_\tau^{(k)}} H_\tau)$, for $\tau\in\mathcal{T}_k(G)$, where $D^{(k)}_\tau$
is an ideal in $D^{(k)}$, $H_\tau$ is a subgroup of $G$, and 
$\delta_\tau^{(k)}$ is the \emph{global} action obtained as the 
restriction of $\delta^{(k)}$ to $H_\tau$ 
and to $D^{(k)}_\tau$. This proves the claim, since $\mathbf{P}$ passes
to direct sums, is stable under tensoring with matrix algebras, and is preserved by formation of global crossed products, by assumption.

Since $A\rtimes_\alpha G$ equals 
$A^{(|G|)}\rtimes_{\alpha^{(|G|)}} G$, 
the claim shows the first assertion
in the statement.

Nuclearity, finiteness of the stable rank, and being
of type I
are known to satisfy the properties in the 
statement (see \cite[Sections~4--6]{Rie_sr} for permanence properties of stable rank and \cite[Theorem~2.4]{JeoOsaPhiTer_cancellation_2009} for 
preservation of the stable rank by global crossed products; see \cite[Theorem~5.6.2]{Mur_algebras_1990} and \cite[Theorem~4.1]{Rie_actions_1980} for similar results for type I).
\end{proof}

In the corollary above, preservation of nuclearity 
was already
known (and for arbitrary partial actions of amenable groups); see
\cite{Aba_enveloping_2003}, or \cite[Propositions~25.10 and 25.12]{Exe_partial_2017}. The result is
however new for stable rank and for type I.

The arguments used in
\autoref{cor:FiniteGpCParbitraryPreserve} cannot  
be applied to fixed point algebras to prove 
preservation results in this setting. Indeed,
it is not 
true that taking fixed point algebras of partial actions
of finite groups preserves short exact sequences,
as we briefly note in the following example:

\begin{eg}
Set $X=[-1,1]$ and $U=(-1,1)$, and let $\alpha$ be the 
partial action of $\Z_2$ on $A=C(X)$ induced by the 
homeomorphism $\sigma$ of $U$ given by $\sigma(x)=-x$
for all $x\in U$. Then $I=C_0(U)$ is an $\alpha$-invariant ideal in $A$, and the restriction of
$\alpha$ to it is the global action $\sigma^\ast$. The induced partial action of $\Z_2$ on $A/I\cong \C^2$ 
is the trivial partial 
action (see item~(1) in~\autoref{eg:IntProp}), so
we have $(A/I)^G=A/I$. Moreover, the quotient map 
$\pi\colon A\to A/I$ is given by $\pi(f)=(f(-1),f(1))$
for $f\in A$, and
the fixed point algebra $A^G$ is easily seen to be 
\[A^G=\{f\in C([-1,1])\colon f(x)=f(-x) \mbox{ for all } x\in [-1,1]\}.\] 
Thus $\pi(A^G)=\{(\lambda,\lambda)\in \C^2\colon \lambda\in \C\}$ does not equal $(A/I)^G=A/I$. It follows
that the equivariant short exact sequence
\[\xymatrix{0\ar[r]& (C_0(U),\sigma^\ast)\ar[r]& (C(X),\alpha)\ar[r] &(\C^2,\mbox{trivial})\ar[r]& 0}\]
does not induce a short exact sequence of fixed point
algebras.
\end{eg}

\section*{Acknowledgements}
The authors are grateful to the referee for
carefully reading the manuscript and 
making several helpful comments, and in particular
for pointing out that fixed point algebras of partial
actions of finite groups do not preserve exact sequences.

\end{document}